\documentclass[oneside,a4paper,reqno]{amsart}
\usepackage{pdfsync, enumerate}
\usepackage{stmaryrd}
\usepackage{mathrsfs}
\usepackage{multicol}
\usepackage{amsmath, amsthm, amscd, amssymb, latexsym, eucal}
\usepackage[all]{xy}
\def\serieslogo@{} \def\@setcopyright{} \makeatother

\usepackage{multienum}

\usepackage[colorinlistoftodos]{todonotes}

\usepackage{hyperref}
\usepackage{color}
\usepackage{cite}


\makeatletter
\renewcommand*\env@matrix[1][c]{\hskip -\arraycolsep
  \let\@ifnextchar\new@ifnextchar
  \array{*\c@MaxMatrixCols #1}}
\makeatother

\usepackage{color}



 \pagestyle{myheadings}
\numberwithin{equation}{section}
\newtheorem{thm}{Theorem}[section]
\newtheorem*{main-thm}{Main Theorem}
\newtheorem*{Auslander-thm}{Auslander's Theorem}

\newtheorem{cor}[thm]{Corollary}
\newtheorem{lem}[thm]{Lemma}
\newtheorem{prop}[thm]{Proposition}

\newtheorem*{thmA}{Theorem~A}
\newtheorem*{thmB}{Theorem~B}
\theoremstyle{definition}
\newtheorem{defn}[thm]{Definition}
\newtheorem{rem}[thm]{Remark}
\newtheorem{exam}[thm]{Example}




\newcommand{\lxr}{\longrightarrow}



\newcommand{\A}{\mathscr A}
\newcommand{\B}{\mathscr B}
\newcommand{\C}{\mathscr C}

\newcommand{\G}{\mathcal G}

\newcommand{\V}{\mathcal V}

\newcommand{\X}{\mathcal X}

\newcommand{\mr}{\mathsf{r}}
\newcommand{\mq}{\mathsf{q}}
\newcommand{\mi}{\mathsf{i}}
\renewcommand{\mp}{\mathsf{p}}

\newcommand{\ml}{\mathsf{l}}
\newcommand{\me}{\mathsf{e}}

\newcommand{\map}{\mathsf{p}}






 \DeclareMathOperator{\inc}{\mathsf{inc}}
  
\DeclareMathOperator*{\Ker}{\mathsf{Ker}}
 \DeclareMathOperator*{\Image}{\mathsf{Im}}
\DeclareMathOperator*{\Coker}{\mathsf{Coker}}

 \DeclareMathOperator{\pd}{\mathsf{pd}}
\DeclareMathOperator*{\id}{\mathsf{id}}
 
 \DeclareMathOperator{\fpd}{\mathsf{fin.dim}}

  \DeclareMathOperator*{\gld}{\mathsf{gl.dim}}
  \DeclareMathOperator*{\pgld}{\mathsf{pgl.dim}}
\DeclareMathOperator*{\igld}{\mathsf{igl.dim}}
    \DeclareMathOperator*{\findim}{\mathsf{fin.dim}}
    \DeclareMathOperator*{\Findim}{\mathsf{Fin.dim}}
  \DeclareMathOperator*{\maxx}{\mathsf{max}}
 
\DeclareMathOperator*{\Mod}{\mathsf{Mod}-\!}

 \DeclareMathOperator*{\smod}{\mathsf{mod}-\!}

\DeclareMathOperator*{\proj}{\mathsf{proj}}
\DeclareMathOperator*{\Inj}{\mathsf{Inj}}
\DeclareMathOperator*{\Proj}{\mathsf{Proj}}

\DeclareMathOperator*{\add}{\mathsf{add}}

\DeclareMathOperator{\Hom}{\mathsf{Hom}}

\DeclareMathOperator{\Ext}{\mathsf{Ext}}

\newcommand{\evl}{{\operatorname{env}}}
\newcommand{\op}{{\operatorname{op}}}
\newcommand{\rad}{\mathsf{rad}}

\newcommand{\Tip}{{\operatorname{Tip}}}
\newcommand{\Nontip}{{\operatorname{Nontip}}}

\newcommand{\iden}{\operatorname{Id}\nolimits}

\newcommand{\Span}{\operatorname{Span}}




\newsavebox{\proofbox}
\savebox{\proofbox}{\begin{picture}(7,7)%
  \put(0,0){\framebox(7,7){}}\end{picture}}





\usepackage{latexsym}
\usepackage{pstricks}
\usepackage{comment}

\begin{document}


\title[Reduction techniques for the finitistic dimension]
{Reduction techniques for the finitistic dimension}
\author[Green]{Edward L.\ Green}
\address{Department of
Mathematics\\ 
Virginia Tech\\ 
Blacksburg, VA 24061\\
USA} 
\email{green@math.vt.edu}

\author[Psaroudakis]{Chrysostomos Psaroudakis}
\address{Institute of Algebra and Number Theory, University of Stuttgart, Pfaffenwaldring 57, 70569 Stuttgart, Germany}
\email{chrysostomos.psaroudakis@mathematik.uni-stuttgart.de}

\author[Solberg]{\O yvind Solberg}
\address{Department of Mathematical Sciences\\
NTNU\\
N-7491 Trondheim, Norway }
\email{oyvind.solberg@math.ntnu.no}

\date{\today}
 
\keywords{%
Cleft extension of abelian categories, Recollements of abelian categories, Finitistic dimension, Path algebras}

\subjclass[2010]{%
18E, 
16E30, 
16E65; 
16E10, 
16G
}

\begin{abstract}
In this paper we develop new reduction techniques for testing the finiteness of the finitistic dimension of a finite dimensional algebra over a field. Viewing the latter algebra as a quotient of a path algebra, we propose two operations on the quiver of the algebra, namely arrow removal and vertex removal. The former gives rise to cleft extensions and the latter to recollements. These two operations provide us new practical methods to detect algebras of finite finitistic dimension. We illustrate our methods with many examples.
\end{abstract}

\maketitle

\setcounter{tocdepth}{1} \tableofcontents

\section{Introduction}
One of the longstanding open problems in representation theory of
finite dimensional algebras is the {\em Finitistic Dimension
  Conjecture}. Let $\Lambda$ be a finite dimensional algebra over a
field. The finitistic dimension $\findim\Lambda$ of $\Lambda$ is defined as the supremum of the projective
dimension of all finitely generated right modules of finite projective
dimension. The finitistic dimension conjecture asserts that the latter
supremum is finite, i.e. $\findim\Lambda<\infty$. Our aim in this
paper is to present some new reduction techniques for detecting the
finiteness of the finitistic dimension.

The finitistic dimension conjecture has a long and interesting
history. Already in the beginning of the sixties, it became apparent that
the finitistic dimension provides a measure of the complexity of the
module category. In the commutative noetherian case, it has been
proved basically by Auslander and Buchbaum \cite{AuslanderBuchsbaum}
that the finitistic dimension equals the depth of the ring. It was
Bass that emphasized the role of this homological dimension in the
non-commutative setup. For more on the history of the finitistic
dimension conjecture we refer to Zimmermann-Huisgen's paper
\cite{ZimmermannHuisgen}.

The finitistic dimension conjecture is known to be related with other
important problems concerning the homological behaviour and the
structure theory of the module category of a finite dimensional
algebra. In the hierarchy of the homological conjectures in
representation theory, the finitistic dimension conjecture plays a
central role. More precisely, we have the following diagram which
shows that almost all other homological conjectures for finite
dimensional algebras are implied by the finitistic dimension
conjecture $(\mathsf{FDC})\colon$
\[
\xymatrix{
(\mathsf{FDC}) \ar@{=>}[r]^{} \ar@{=>}[d]^{}  & (\mathsf{WTC}) \ar@{=>}[r]^{}  & (\mathsf{GSC})   \\
(\mathsf{NuC}) \ar@{=>}[r]^{}  & (\mathsf{SNC}) \ar@{=>}[r]^{} & (\mathsf{ARC})  \ar@{=>}[r]^{} & (\mathsf{NC})  } 
\]
We write $(\mathsf{SNC})$ for the strong Nakayama conjecture,
$(\mathsf{NC})$ for the Nakayama conjecture, $(\mathsf{ARC})$ for the
Auslander-Reiten conjecture, $(\mathsf{WTC})$ for the Wakamatsu
tilting conjecture, $(\mathsf{NuC})$ for the Nunke condition and
$(\mathsf{GSC})$ for the Gorenstein symmetry conjecture. The above
diagram is not complete, we refer to
\cite{AuslanderReiten, Happel:Gorenstein, Happel, Xi:FinDimConjII} and references therein for more information on
the hierarchy of homological conjectures.

In the middle of the seventies, Fossum--Griffith--Reiten
\cite{FosGriRei} proved for a triangular matrix algebra
$\Lambda =\left(\begin{smallmatrix} R & 0\\ M & S
  \end{smallmatrix}\right)$ that the finitistic dimension of $\Lambda$
is less or equal of the finitistic dimensions of $R$ and $S$ plus one.
Thus, for this particular class of algebras we can test finiteness of
the finitistic dimension by computing the finitistic dimension of the
corner algebras. This result should be considered as the first
reduction technique for the finitistic dimension. Subsequently, but
almost twenty years after, Happel \cite{Happel} showed that if a
finite dimensional algebra $\Lambda$ admits a recollement of bounded
derived categories
$(\mathsf{D}^{\mathsf{b}}(\smod\Lambda''),
\mathsf{D}^{\mathsf{b}}(\smod\Lambda),
\mathsf{D}^{\mathsf{b}}(\smod\Lambda'))$, where $\Lambda'$ and
$\Lambda''$ are finite dimensional algebras, then the finitistic
dimension of $\Lambda$ is finite if and only if the same holds for
$\Lambda'$ and $\Lambda''$. Clearly this is again a reduction
technique for the finitistic dimension. However, it is in general a
difficult problem to decompose the bounded derived category of an
algebra in such a recollement situation. On the other hand, Happel's
technique can be considered as a natural extension of
Fossum-Griffith-Reiten's result, since triangular matrix algebras
(under mild conditions on the bimodule) induces a recollement at the
level of bounded derived categories.

In the beginning of the nineties, Fuller and Saorin
\cite{Ful-Sao:FinDimConjArtRings} introduced the idea of illuminating
simples of projective dimension less or equal to one. In particular,
picking the idempotent $f$ corresponding to such a simple and
considering the idempotent $e=1-f$, they showed that
$\findim{e\Lambda e}<\infty$ implies $\findim{\Lambda}<\infty$. This
is clearly a reduction technique for the finitistic dimension. We
extend this result by putting it in the general context of
recollements of abelian categories. Clearly, this result is the
predecessor of the vertex removal operation. More recently, Xi in a
series of papers \cite{Wang-Xi, Xi:FinDimConjI, Xi:FinDimConjII,
  Xi:FinDimConjIII} introduced various methods for detecting
finiteness of the finitistic dimension. It should be noted that Xi
has connected the finitistic dimension of an algebra of the form
$e\Lambda e$, where $e$ is an idempotent, with other homological
dimensions, for instance, finiteness of the global dimension of
$\Lambda$ (less or equal to four), finiteness of the representation
dimension of $\Lambda/\Lambda e\Lambda$ (less or equal to three) and
several other interesting relations. On the other hand, he considers
pairs of algebras $(B,A)$ where $A$ is an extension of $B$ and the
Jacobson radical $\rad(B)$ is a left ideal in $A$. Then, under
some further conditions he shows that $\findim{A}<\infty$ implies
$\findim{B}<\infty$. Roughly speaking, Xi's philosophy is to control
the finitistic dimension by certain extension of algebras. Clearly,
this machinery is again another reduction technique for testing the
finiteness of the finitistic dimension.

At this point we would like to mention Xi's comment \cite{Wang-Xi} on
the available techniques that we have for the finitistic
dimension. Very briefly, he writes that ``not many practical methods
are available so far to detect algebras of finite finitistic dimension
and it is necessary to develop some methods even for some concrete
examples''. The central point for us in this work is exactly the lack
of practical methods to estimate the finitistic dimension of an algebra.

Let $\Lambda$ be a finite dimensional algebra over an algebraically
closed field $k$. Thus $\Lambda$ is Morita equivalent to an admissible quotient $kQ/I$ of a
path algebra $kQ$ over $k$. Moreover, if $e$ is a trivial path
in $kQ$, then $v_e$ denotes the corresponding vertex in $Q$.  Our
approach on reducing the finitistic dimension is based on two
operations on the quiver of the algebra. It is natural to consider how
the vertices and the arrows contribute to the finitistic
dimension. The idea is to remove those that don't contribute and thus
the finiteness of the finitistic dimension is reduced to a simpler
algebra at least in terms of size. This is clearly a practical method
that can be applied easily to any algebra. 

Let $a$ be an arrow in $Q$, which does not occur in a minimal generating set of $I$, and consider the {\em arrow removal} $\Gamma=\Lambda/\langle
\overline{a}\rangle$. The abstract categorical framework of this operation is the
concept of cleft extension of abelian categories.  Our first main
result provides a ring theoretical characterization of the arrow
removal operation. In addition, it reduces the finiteness of the finitistic dimension of $\Lambda$ to the one of the arrow removal.
The first part of Theorem~A is proved in Proposition~\ref{prop:onearrowchar}. The second part is stated in
Theorem~\ref{thm:summarycleftext} and follows from
Theorem~\ref{thm:cleftextfindim} (a general result on the finitistic
dimension for cleft extensions) and
Proposition~\ref{propremoveiscleft} (a result on the precise
properties of the arrow removal as a cleft extension).

\begin{thmA}\textnormal{(\textsf{Arrow Removal})}
Let $\Lambda = kQ/I$ be an admissible quotient of a path algebra
$kQ$ over a field $k$. Let $a\colon v_e\lxr v_f$ be an arrow in $Q$ and define $\Gamma=\Lambda/\langle
\overline{a}\rangle$ the arrow removal. The following hold.   
\begin{enumerate}[\rm(i)]
\item The arrow $a\colon v_e\lxr v_f$ in $Q$
  does not occur in a set of minimal generators of $I$ in $kQ$ if and
  only if $\Lambda$ is isomorphic to the trivial extension $\Gamma\ltimes
  P$, where $P= \Gamma e\otimes_kf\Gamma$ with $\Hom_\Gamma(e\Gamma,
  f\Gamma)=(0)$. 

\smallskip

\item If the arrow $a$ does not occur in a set of minimal generators of $I$ in $kQ$, then  $\findim\Lambda < \infty$ if and only if $\findim\Gamma < \infty$.

\end{enumerate} 
\end{thmA}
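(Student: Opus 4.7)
The plan for part (i) is to realise $\Gamma$ as a subalgebra of $\Lambda$ via a canonical section and then to upgrade this splitting to a trivial-extension decomposition. Suppose first that $a$ does not occur in a set of minimal generators of $I$; choose minimal generators of $I$ lying in $kQ'$, where $Q'$ is the subquiver of $Q$ obtained by deleting $a$, and let $I_0 \subseteq kQ'$ be the ideal these generate. Then $\Gamma \cong kQ'/I_0$, and the inclusion $kQ' \hookrightarrow kQ$ descends to an algebra section $\sigma\colon \Gamma \lxr \Lambda$ of the canonical projection $\pi\colon \Lambda \epic \Gamma$. Writing $M = \ker\pi = \langle\overline{a}\rangle$, one obtains $\Lambda = \sigma(\Gamma) \oplus M$ as $k$-vector spaces.

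The central step is to show $M^2 = 0$. Every element of $M$ is a $k$-linear combination of residues of paths $p\overline{a}q$ with $p$ ending at $v_e$ and $q$ starting at $v_f$, so an element of $M^2$ is a sum of residues $p_1\overline{a}q_1 p_2\overline{a}q_2$; in each such summand, $q_1 p_2$ is a path in $\Gamma$ representing an element of $f\Gamma e \cong \Hom_\Gamma(e\Gamma, f\Gamma)$, which vanishes by hypothesis. Hence $M^2 = 0$, so $\Lambda \cong \Gamma \ltimes M$ canonically, and the $\Gamma$-bimodule map $\Gamma e \otimes_k f\Gamma \lxr M$, $x\otimes y \longmapsto x\overline{a}y$, is easily seen to be an isomorphism. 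For the converse, given $\Lambda \cong \Gamma \ltimes (\Gamma e \otimes_k f\Gamma)$ with $\Hom_\Gamma(e\Gamma, f\Gamma) = 0$, one presents $\Gamma$ as $kQ'/I_0$; the vanishing of $f\Gamma e$ makes the relations $\overline{a}\cdot (f\Gamma e) \cdot \overline{a} = 0$ automatic consequences of $I_0$ inside $kQ$, so the ideal $I \subseteq kQ$ defining $\Lambda$ is simply the ideal generated by (a lift of) $I_0$, and $a$ consequently does not appear in any minimal generating set of $I$.

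For part (ii), the strategy is to invoke the cleft-extension machinery developed earlier in the paper. The pair $(\sigma,\pi)$ together with the $\Gamma$-bimodule $M$ exhibit $\Lambda$ as a cleft extension of $\Gamma$, and $M = \Gamma e \otimes_k f\Gamma$ is especially tame because $\Gamma e$ and $f\Gamma$ are finitely generated projective as left and right $\Gamma$-modules respectively (both are direct summands of $\Gamma$). These are precisely the hypotheses that Proposition~\ref{propremoveiscleft} verifies for the arrow-removal setup, and the equivalence $\findim\Lambda < \infty \Leftrightarrow \findim\Gamma < \infty$ then follows directly from Theorem~\ref{thm:cleftextfindim}. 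The main technical difficulty therefore lies not in the present reduction but inside that general cleft-extension result: one must transfer projective resolutions back and forth across the adjoint pair induced by $(\sigma,\pi)$, and the one-sided projectivity of $\Gamma e$ and $f\Gamma$ is what keeps the iterates of the functor $M\otimes_\Gamma -$ under control.
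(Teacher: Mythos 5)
Your outline for part (ii) matches the paper's approach (realize the arrow removal as a cleft extension, check the hypotheses of Proposition~\ref{propremoveiscleft}, then quote Theorem~\ref{thm:cleftextfindim}); one small correction is that what keeps the iterates of $M\otimes_\Gamma -$ under control is not projectivity but the vanishing $M\otimes_\Gamma M = 0$ (i.e.\ $F^2 = 0$), while projectivity of $\Gamma e$ and $f\Gamma$ is what puts $\Image F$ in $\Proj\Gamma$.

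For part (i), however, there is a genuine gap in the forward direction. You write that each element $q_1p_2$ arising in a product of two elements of $M=\langle\overline{a}\rangle$ lies in $f\Gamma e\cong\Hom_\Gamma(e\Gamma,f\Gamma)$, ``which vanishes by hypothesis.'' But there is no such hypothesis: in the forward direction the only assumption is that $a$ does not occur in a minimal generating set of $I$, and the vanishing of $\Hom_\Gamma(e\Gamma,f\Gamma)$ (equivalently, of $f\Lambda e$) is precisely the nontrivial thing that must be \emph{proved}. This is the whole point of the sharpened statement in the paper: the authors first treat the case where $\Hom_\Lambda(e\Lambda,f\Lambda)=0$ is imposed as an extra assumption (Proposition~\ref{prop:Lambda-a-Lambda}), and Proposition~\ref{prop:onearrowchar} is exactly the argument showing that this extra condition is automatic when a single arrow is removed. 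The proof there is not formal: one chooses a length-left-lexicographic Gr\"obner basis $\mathcal{G}$ for $I$ avoiding $a$, picks a minimal $p$ with $ap\notin I$, and extracts a contradiction from the reduction of $(ap)^t\in I$ against $\mathcal{G}$. Your draft skips this entirely. (Also, a smaller slip in the same paragraph: $q_1p_2$ need not lie in $\sigma(\Gamma)$, since the paths $q_1,p_2$ may themselves pass through $a$; the correct statement is $q_1p_2\in f\Lambda e$, and it is $f\Lambda e=0$ that one needs.)

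A second, more technical, overstatement is the claim that the $\Gamma$-bimodule map $\Gamma e\otimes_k f\Gamma\to M$, $x\otimes y\mapsto x\overline{a}y$, ``is easily seen to be an isomorphism.'' Surjectivity is clear, but injectivity is not formal: the paper proves it (Proposition~\ref{prop:Lambda-a-Lambda}~(ii)) by writing a would-be kernel element in normal form with respect to the Gr\"obner basis and deriving a contradiction from the fact that the tip of its image would have to be divisible by the tip of some $g\in\mathcal{G}$, which is impossible because $\mathcal{G}$ avoids $a$. Without such an argument it is not clear that $\dim_k M = \dim_k(\Gamma e)\cdot\dim_k(f\Gamma)$, which is what the isomorphism claims. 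Your converse direction is essentially fine and corresponds to Proposition~\ref{prop:quiveroftrivext} together with Corollary~\ref{cor:trivialextwithcond}.
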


We would like to mention that the arrow removal operation has been
considered in the work of Diracca--Koenig \cite{DK}. Their focus was
removing arrows in a monomial relation and homological reductions
towards the strong no loop conjecture.

Our second operation is the vertex removal. Let $\Lambda$ be a
quotient of a path algebra as above and take $e$ a sum of
vertices. Then the vertices in the quiver of $e\Lambda e$ correspond
to the ones occurring in $e$ and therefore the vertices occurring in
$1-e$ are removed. The transition from $\Lambda$ to $e\Lambda e$ is
what we call {\em vertex removal}. The abstract categorical framework
of this operation is the concept of recollements of abelian
categories. In our second main result we show that removing the
vertices which correspond to simples of finite injective dimension
provides a reduction for the finitistic dimension. This is our second
new practical method for testing the finiteness of the finitistic
dimension. The result is basically proved in
Theorem~\ref{thmigldfinite} in the setting of recollements of abelian
categories.

\begin{thmB}\textnormal{(\textsf{Vertex Removal})}
Let $\Lambda$ be a finite dimensional algebra over an algebraically closed field and $e$ an idempotent element. Then
\[
  \findim\Lambda\leq \findim e\Lambda e+\mathsf{sup}\{\id{_{\Lambda}S}  \ | \ S \ \text{simple} \ \Lambda/\Lambda e\Lambda\text{-module}\}
\] 
\end{thmB}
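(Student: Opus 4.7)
The plan is to work in the standard recollement of abelian categories
$(\smod(\Lambda/\Lambda e\Lambda),\ \smod\Lambda,\ \smod(e\Lambda e))$
induced by the idempotent~$e$, with its six functors $(i^{\ast},i_{\ast},i^{!},j_{!},j^{\ast},j_{\ast})$; concretely $j^{\ast}M=Me$, $j_{!}N=N\otimes_{e\Lambda e}e\Lambda$, and $i_{\ast}$ is the full embedding of right $\Lambda/\Lambda e\Lambda$-modules into right $\Lambda$-modules. Set $n=\sup\{\id_{\Lambda}({}_{\Lambda}S)\mid S\text{ a simple }\Lambda/\Lambda e\Lambda\text{-module}\}$, which by $k$-duality coincides with $\sup\{\pd_{\Lambda}S\}$ ranging over the same set of simples viewed as right modules. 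Fix a finitely generated right $\Lambda$-module $M$ with $\pd_{\Lambda}M<\infty$; the goal is $\pd_{\Lambda}M\leq\findim(e\Lambda e)+n$.

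The first step I would carry out is the auxiliary bound that every finitely generated right $\Lambda$-module $N$ annihilated by $\Lambda e\Lambda$ (equivalently, in the essential image of $i_{\ast}$) satisfies $\pd_{\Lambda}N\leq n$. This is a straightforward induction on composition length, combining the defining bound on simples with the standard estimate $\pd B\leq\max\{\pd A,\pd C\}$ for a short exact sequence $0\to A\to B\to C\to 0$.

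The second step is to fit $M$ into the counit-based exact sequence
\[
0\to K\to j_{!}(Me)\to M\to C\to 0
\]
arising from the natural transformation $j_{!}j^{\ast}\to\mathrm{Id}$. Applying the exact functor $j^{\ast}$ collapses the middle arrow to the identity $Me\to Me$, which forces both $K$ and $C$ to lie in the essential image of $i_{\ast}$; by Step~1 we therefore have $\pd_{\Lambda}K,\pd_{\Lambda}C\leq n$. Splitting the four-term sequence through the image $L$ of the counit and applying the standard short-exact-sequence estimates for projective dimension reduces the whole problem to bounding $\pd_{\Lambda}j_{!}(Me)$.

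The third step is the technical core: to show $\pd_{\Lambda}j_{!}(Me)\leq\findim(e\Lambda e)+n$. Restricting a projective resolution of $M$ via $j^{\ast}=-\otimes_{\Lambda}\Lambda e$ produces a projective resolution of $Me$ over $e\Lambda e$, so $\pd_{e\Lambda e}(Me)\leq\findim(e\Lambda e)$. I would then induct on this dimension. The base case where $Me$ is projective is immediate because $j_{!}$ is a left adjoint to the exact functor $j^{\ast}$, hence preserves projectives, so $j_{!}(Me)$ is projective over $\Lambda$. For the inductive step fit $Me$ into an exact sequence $0\to\Omega(Me)\to Q_{0}\to Me\to 0$ with $Q_{0}$ projective over $e\Lambda e$, apply $j_{!}$, and record the failure of exactness via the four-term sequence incorporating $\Tor_{1}^{e\Lambda e}(Me,e\Lambda)$; this Tor-module is annihilated by $j^{\ast}$, hence lies in the $i_{\ast}$-image and has $\pd_{\Lambda}\leq n$ by Step~1. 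Combining the inductive hypothesis on $\pd_{\Lambda}j_{!}(\Omega(Me))$ with $\pd_{\Lambda}\Tor_{1}\leq n$ via the standard short-exact-sequence estimates yields the target additive bound. The main obstacle is precisely this Tor-accounting: ensuring that the iterated Tor-corrections across the induction accumulate only to the single additive summand $n$ rather than blowing up linearly with $\pd_{e\Lambda e}(Me)$, and this additive (not multiplicative) behaviour is what the abelian-recollement machinery invoked in the paper is designed to package.
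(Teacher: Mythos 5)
Your Steps 1 and 2 are sound and the counit decomposition $j_!j^{\ast}\to\mathrm{Id}$ is a natural starting point, but Step 3 is where the argument breaks, and it is exactly the obstacle you name without closing. Inducting on $\pd_{e\Lambda e}(Me)$ and applying $j_!$ to a syzygy sequence yields a four-term exact sequence at each stage whose $\Tor_1$ term you control only via $\pd_\Lambda(\Tor_1)\leq n$; chaining short-exact-sequence estimates through this four-term complex costs more than your induction can afford. Already for $\pd_{e\Lambda e}(Me)=1$, the sequence $0\to\Tor_1\to j_!(\Omega Me)\to j_!(Q_0)\to j_!(Me)\to 0$ has both middle terms projective, so $\pd_\Lambda j_!(Me)\leq\pd_\Lambda\Tor_1+2\leq n+2$, which overshoots your promised $n+\findim(e\Lambda e)$ whenever $\findim(e\Lambda e)=1$; iterating gives $\pd_\Lambda j_!(Me)\leq n+\pd_{e\Lambda e}(Me)+1$, off by one from the required additive bound.

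Your closing remark that the abelian-recollement machinery is designed to \emph{package} this is a misreading of the mechanism, which is not a subtler form of Tor bookkeeping. Theorem~\ref{thmigldfinite} works from $\igld_{\B}\A$, the supremum of \emph{injective} dimensions of objects $\mi(A)$ inside $\B$, together with the hypothesis that $\B$ has projective covers. Through Lemma~\ref{lemisomorphismsimpleobject}, $\Ext^{t+1}_\B(X,\mi(S))\simeq\Hom_\B(\Omega^{t+1}(X),\mi(S))$ for simple $S$, so when $\igld_\B\A\leq t$ one obtains the vanishing $\Hom_\B(\Omega^{t+1}(X),\mi(A))=0$ for every $A$ in $\A$ --- a genuine orthogonality, not a dimension bound. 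Lemma~\ref{lemtruncatedprojresol} converts this into a projective resolution whose tail beyond step $t$ consists entirely of objects $\ml(Q_j)$, and applying the exact functor $\me$ collapses that tail to a projective resolution of $\me(\Omega^t X)$ in $\C$ of length at most $\Findim\C$; the additive bound falls out in one stroke. Your opening $k$-duality step is also where you change invariants: converting $\id_\Lambda$ of simples into $\pd_\Lambda$ of simples trades $\igld_\B\A$ for $\pgld_\B\A$, and these are distinct. The paper's own $\pgld$-based reduction, Theorem~\ref{thmfindimpgld}, requires $\pgld_\B\A\leq 1$ and only delivers $\Findim\B\leq\Findim\C+2$, not an additive formula in $\pgld$; Step 3 is in effect attempting to prove an additive $\pgld$-bound that neither the paper claims nor your Tor accounting achieves.
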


The above result shows an interesting interplay between the finitistic
dimension and the injective dimension of some simples. Recently, a
similar connection was observed by Rickard \cite{Rickard}. In
particular, he showed that if the injectives over a finite dimensional
algebra generate its unbounded derived category, then the finitistic
dimension conjecture holds.

We have already mentioned several times the practical issue of our
main results. To see this, we advice the reader to look at the last
section where we present several examples where our methods can be
applied without much effort. We have also tested our techniques on
some known examples from the literature.

Our reduction techniques can be applied to any finite dimensional
algebra over an algebraically closed field. Given such an algebra, we
can iterate the reductions (vertex and arrow removal) to obtain a
reduced algebra, see Definition~\ref{defn:reduced}. As a consequence
of our work, \emph{to prove or disprove the finitistic dimension conjecture
it suffices to consider the class of reduced algebras}.

The contents of the paper section by section are as follows. Sections~\ref{sectioncleftextabcat} and~\ref{sectionrecolabcat} are devoted for the abstract categorical framework of the arrow and vertex removal operations on a quiver. In Section~\ref{sectioncleftextabcat} we study cleft extensions of abelian categories. More precisely, we recall and prove several properties of cleft extensions that are used later in Section~\ref{sec:cleft-findim}. We analyze carefully the associated endofuctors that this data carries and we settle the necessary conditions on a cleft extension that the arrow removal operation requires. 

In Section~\ref{sectionrecolabcat} we study recollements of abelian categories. We introduce a relative (injective) homological dimension in a recollement situation and show that it provides interesting homological properties in the abelian categories involved in a recollement, see Proposition~\ref{recollfiniteArelglodimofB}. We also recall the notion of a functor between abelian categories being an eventually homological isomorphism and we characterize when the quotient functor in a recollement is an eventually homological isomorphism, see Proposition~\ref{prop:algebra-ext-iso}. The latter result is used in Section~\ref{sectionvertexremoval}. 

Section~\ref{sec:cleft-findim} is devoted to arrow removal and the finitistic dimension. This section is divided into two subsections. In the first one, we investigate the behaviour of the finitistic dimension of the abelian categories in a cleft extension (under certain conditions), see Theorem~\ref{thm:cleftextfindim}. This is the first key result for showing Theorem~A (ii). In the second subsection, we study arrow removals of quotients of path algebras. We first characterize arrow removals as trivial extensions with projective bimodules admitting special properties, see Corollary~\ref{cor:trivialextwithcond} and Propositions~\ref{prop:Lambda-a-Lambda} and~\ref{prop:onearrowchar}. Then, we show that arrow removals gives rise to cleft extensions satisfying the needed properties for applying Theorem~\ref{thm:cleftextfindim}. This is done in Proposition~\ref{propremoveiscleft}. Finally, we summarise our results on reducing the finitistic dimension by removing arrows in Theorem~\ref{thm:summarycleftext}, where Theorem~A (ii) is a special case.

In Section~\ref{sectionvertexremoval} we investigate the vertex
removal operation with respect to the finitistic dimension. This
section is divided into four subsections. In the first subsection, we
reprove the main result of \cite{GreenMarcos} on reducing the
finitistic dimension via the homological heart using the reduction
techniques of Fossum--Griffith--Reiten and Happel. In the second
subsection, we generalize the result of Fuller--Saorin
\cite[Proposition~2.1]{Ful-Sao:FinDimConjArtRings} (vertex removal,
projective dimension at most one) from the case of artinian rings to
the general context of recollements of abelian categories, see
Theorem~\ref{thmfindimpgld}. We remark that we also provide a lower
bound. The third subsection is about removing vertices of finite
injective dimension. In particular, in Theorem~\ref{thmigldfinite} we
show Theorem~B stated above in the general context of recollements of
abelian cateogories. Note that Theorems~\ref{thmfindimpgld} and
\ref{thmigldfinite} (as well as Theorem~\ref{thm:cleftextfindim} for
the arrow removal) can be applied to the big finitistic dimension as
well. The last subsection is about eventually homological isomorphisms
in recollements of abelian categories and invariance of finiteness of
the finitistic dimension between the middle category and the quotient
category. In particular, we show that $\findim\Lambda<\infty$ if and
only if $\findim{e\Lambda e}<\infty$, where $\Lambda$ is an Artin
algebra and $e$ an idempotent element, provided that the quotient
functor $e(-)\colon \smod\Lambda\lxr \smod{e\Lambda e}$ is an
eventually homological isomorphism, see Theorem~\ref{thmevenfindim}.

The last section, Section~\ref{sectionexamples}, is devoted to examples. We provide a variety of new and known examples where we reduce the finiteness of the finitistic dimension using our techniques of arrow and vertex removal. In particular, we show in Example~\ref{exam:Koszul} that the finitistic dimension of a reduced algebra can be arbitrary large. We also present an example showing that a reduced algebra is not unique.

In an appendix we provide a short introduction to non-commutative Gr\"obner basis for path algebras. We recall some notions and results from the theory of Gr\"obner basis that are used in Section~\ref{sec:cleft-findim}.

\subsection*{Conventions and Notation}
For a ring $R$ we work usually with right $R$-modules and the
corresponding category is denoted by $\Mod{R}$.  The full subcategory
of finitely presented $R$-modules is denoted by $\smod{R}$.  
By a module over an Artin algebra $\Lambda$, we mean a finitely
presented (generated) right $\Lambda$-module. Our abelian categories are assumed to have enough projectives and enough injectives. Given an abelian category $\A$, we denote by $\Proj\A$ (resp. $\Inj\A$) the full subcategory consisting of projective (resp. injective) objects. For an additive functor $F\colon \A \lxr \B$ between
additive categories, we denote by
$\Image F = \{B \in \B \mid B \simeq F(A) \ \text{for some} \ A \in
\A\}$ the {\bf essential image} of $F$ and by
$\Ker F = \{A \in \A \mid F(A) = 0\}$ the {\bf kernel} of $F$. For a path algebra $kQ$, we denote by $v_{e}$ the vertex corresponding to the primitive
idempotent $e$.

\subsection*{Acknowledgments}
The second named author is supported by Deutsche
Forsch\-ungsgemeinschaft (DFG, grant KO $1281/14-1$). The third named
author was partially supported by FRINAT grant number 23130 from the
Norwegian Research Council.  The results of this work were announced
by the second author in the Algebra seminars of Stuttgart and of
Bielefeld. The second author would like to express his gratitude to
the member of the groups for the questions, comments and the useful
discussions.  The software \cite{QPA} was used for initial analysis of
many of the examples in this paper. 

\section{Cleft extensions of abelian categories}
\label{sectioncleftextabcat}

In this section we study cleft extensions of abelian
  categories. This concept was introduced by Beligiannis in
  \cite{Bel:Cleft}, and it generalizes trivial extension of abelian
  categories due to Fossum-Griffith-Reiten \cite{FosGriRei}.  We start
  by recalling and reviewing some of the known results about cleft
  extensions of abelian categories from \cite{Bel:Cleft,
    Bel:Relativecleft}.  Then endofunctors of the two categories
  occurring in a cleft extension are constructed and some properties
  are derived, which are used in Section \ref{sec:cleft-findim} to
  investigate the finitistic dimensions in a cleft extension. 

\subsection{Basis properties}
We first recall the definition of cleft extensions of abelian
categories.

\begin{defn}\textnormal{(\!\!\cite[Definition 2.1]{Bel:Cleft})}
\label{defncleftext}
A {\bf cleft extension} of an abelian category $\B$ is an
abelian category $\A$ together with functors:
\[
\xymatrix@C=0.5cm{
\B \ar[rrr]^{\mi} &&& \A \ar[rrr]^{\me} &&& \B
\ar @/_1.5pc/[lll]_{\ml} } 
\]
henceforth denoted by $(\B,\A, \me, \ml, \mi)$, such that the
following conditions hold:
\begin{enumerate}[\rm(a)]
\item The functor $\me$ is faithful exact.

\item The pair $(\ml,\me)$ is an adjoint pair of functors,
  where we denote the adjunction by
\[\theta_{B,A}\colon \Hom_\A(\ml(B), A) \simeq \Hom_\B(B,\me(A)).\]

\item There is a natural isomorphism
  $\varphi\colon \me\mi\lxr \mathsf{Id}_{\B}$ of functors.
\end{enumerate} 
\end{defn}

Denote the unit $\theta_{B,\ml(B)}(1_{\ml(B)})$ and the counit
$\theta^{-1}_{\me(A),A}(1_{\me(A)})$ of the adjoint pair $(\ml,\me)$
by $\nu\colon 1_\B\lxr \me\ml$ and $\mu\colon \ml\me\lxr 1_\A$,
respectively.  The unit and the counit satisfy the relations 
\begin{equation}\label{eq:unit}
1_{\ml(B)} = \mu_{\ml(B)}\ml(\nu_B)
\end{equation}
and
\begin{equation}\label{eq:counit}
1_{\me(A)} = \me(\mu_A)\nu_{\me(A)}
\end{equation}
for all $B$ in $\B$ and $A$ in $\A$.  From \eqref{eq:counit} the
morphism $\me(\mu_A)$ is an (split) epimorphism.  Using that $\me$ is
faithful exact, we infer that $\mu_A$ is an epimorphism for all $A$ in
$\A$.  Hence we have for every $A$ in $\A$ the following short
exact sequence
\begin{equation}
\label{firstfundamentalsequence}
\xymatrix@C=0.5cm{
0 \ar[rr] && \Ker\mu_A \ar[rr]^{} && \ml\me(A) \ar[rr]^{\mu_A} && A
\ar[rr] && 0  }
\end{equation}

In the next result we collect some basic properties of a cleft
extension. Most of these properties follow from
Definition~\ref{defncleftext} and are discussed in \cite{Bel:Cleft,
  Bel:Relativecleft} but for completeness and the reader's convenience we
provide a proof.

\begin{lem}
\label{basicproperties}
Let $\A$ be a cleft extension of $\B$. Then the following hold.
\begin{enumerate}[\rm(i)]
\item The functor $\me\colon \A\lxr \B$ is essentially surjective. 

\item The functor $\mi\colon \B \lxr \A$ is fully faithful and exact.

\item The functor $\ml\colon \B\lxr \A$ is faithful and preserves
  projective objects.

\item There is a functor $\mq\colon \A\lxr \B$ such that $(\mq,\mi)$
  is an adjoint pair.

\item There is a natural isomorphism $\mq\ml\simeq \mathsf{Id}_{\B}$ of
  functors. 
\end{enumerate}
\begin{proof}
(i) Since $\me\mi \simeq \mathsf{Id}_\B$, it follows immediately that
$\me$ is essentially surjective. 

(ii) $\mi$ \textbf{faithful}: Let $f\colon B\lxr B'$ be in $\B$, and assume that
$\mi(f) = 0$.  Then we have that $\me(\mi(f)) = 0$, and we have a
commutative diagram 
\[\xymatrix{
\me(\mi(B)) \ar[r]^{\me(\mi(f))}\ar[d]_{\varphi_B} & \me(\mi(B'))\ar[d]^{\varphi_{B'}}\\
B \ar[r]^f & B'
}\]
It follows that $f=0$ and $\mi$ is faithful.

$\mi$ \textbf{full}: Let $g\colon \mi(B)\lxr \mi(B')$.  Then
straightforward arguments show that 
\[g = \mi(\varphi_{B'} \me(g)\varphi_{B}^{-1}).\]  
This shows that $\mi$ is full.  

$\mi$ \textbf{exact}: First note the following.  Since
$\me\colon \A\lxr\B$ is faithful, the kernel of $\me$ is consisting
only of the zero object. Let 
\[
\xymatrix@C=0.5cm{
\eta\colon 0 \ar[rr] && B_1 \ar[rr]^{f} && B_2 \ar[rr]^{g} && B_3
\ar[rr] && 0  }
\]
be an exact sequence in $\B$.  Since $\me\mi\simeq \mathsf{Id}_\B$, we
infer that $\me(\mi(\eta))$ is a short exact sequence.  The complex 
\[
\xymatrix@C=0.5cm{
0 \ar[rr] && \mi(B_1) \ar[rr]^{\mi(f)} && \mi(B_2) \ar[rr]^{\mi(g)} && \mi(B_3) \ar[rr] && 0  }  
\] 
gives rise to the exact sequences 
\[
\xymatrix@C=0.5cm{
0 \ar[rr] && \Ker \mi(f) \ar[rr]^{} && \mi(B_1) \ar[rr]^{} && \Image \mi(f) \ar[rr] && 0},
\]
\[
\xymatrix@C=0.5cm{
0 \ar[rr] && \Ker \mi(g) \ar[rr]^{} && \mi(B_2) \ar[rr]^{} && \Image \mi(g) \ar[rr] && 0},
\]
\[
\xymatrix@C=0.5cm{
0 \ar[rr] && \Image \mi(g) \ar[rr]^{} && \mi(B_3) \ar[rr]^{} && \Coker \mi(g) \ar[rr] && 0}.
\]
Applying $\me$ to these sequences and using that $\me$ is faithful
exact and that $\me(\mi(\eta))$ is exact, we conclude that
$\Ker \mi(f) = 0$, $\Image \mi(f) \simeq \Ker \mi(g)$,
$\Image \mi(g) \simeq \mi(B_3)$ and $\Coker \mi(g) = 0$.  Consequently,
the functor $\mi\colon \B\lxr \A$ is exact.

(iii) $\ml$ \textbf{faithful}: Let $f\colon B\lxr B'$ be in $\B$.  For
$X$ in $\B$ let
$\mu'_X\colon \me\ml(X)\lxr X$ be defined by the following 
\[\xymatrix{
\me\ml(\me\mi(X)) \ar[r]^{\me(\mu_{\mi(X)})}
\ar[d]_{\me\ml(\varphi_X)} & \me\mi(X)\ar[d]^{\varphi_X}\\
\me\ml(X) \ar[r]^{\mu'_X} & X
}\]
Then we have the following commutative diagram
\[
\xymatrix{
_\B(B,B) \ar[r]^-\ml\ar[d]^{_\B(B,f)} &
{_\A(\ml(B),\ml(B))}\ar[r]^\simeq \ar[d]^{_\A(\ml(B),\ml(f))} &
{_\B(B,\me\ml(B))}\ar[rr]^{_\B(B,\mu'_B)}
\ar[d]^{_\B(B,\me\ml(f))} & & {_\B(B,B)}\ar[d]^{_\B(B,f)} \\
_\B(B,B')\ar[r]^-\ml & {_\A(\ml(B),\ml(B'))} \ar[r]^\simeq &
{_\B(B,\me\ml(B'))}\ar[rr]^{_\B(B,\mu'_{B'})} & & {_\B(B,B')}
}
\]
Starting with $1_B$ in the upper left corner and tracing this element
to the upper right corner we obtain $1_B$ using \eqref{eq:counit} for
$A = \mi(B)$, hence we get $f$ in the lower right corner.  Starting
with $1_B$ in the upper left corner and tracing it around the first
square we obtain $\ml(f)$.  Using that $\ml(f)$ is mapped to $f$ on
the lower row, it follows that the functor $\ml$ is faithful.  

$\ml$ \textbf{preserves projectives}: Since we have the adjoint pair
$(\ml,\me)$ and the functor $\me$ is exact, it follows that the
functor $\ml\colon \B\lxr\A$ preserves projective objects.

(iv) We show that there is a functor $\mq\colon \A\lxr \B$ such that
$(\mq, \mi)$ is an adjoint pair. Consider first the maps $\me \mu_A$
and $\me \mu_{\mi\me(A)}$ in the following not necessarily
commutative diagram:
\[
\xymatrix{
\me\ml\me\mi\me(A) \ar[r]^{\me\mu_{\mi\me(A)}}
\ar[d]^{\me\ml\varphi_{\me(A)}}_{\simeq} & \me\mi\me(A)
\ar[d]^{\varphi_{\me(A)}}_{\simeq}  \\ 
\me\ml\me(A) \ar[r]^{\me\mu_A} & \me(A) }
\]
For simplicity, we identify the vertical isomorphisms and we consider
the following exact sequence:
\begin{equation}
\label{exactseqdefq}
\xymatrix{
\me\ml\me(A) \ar[rr]^{\me\mu_A-\me\mu_{\mi\me(A)}} && \me(A)
\ar[r]^{\kappa_A \ \ \ \ \ \ \ \ \ \ \ \ } &
\Coker{(\me\mu_A-\me\mu_{\mi\me(A)})} \ar[r]^{} & 0} 
\end{equation}
Then there is a functor $\mq\colon \A\lxr \B$ defined on objects by
the assignment
$A\mapsto \mq(A):=\Coker{(\me\mu_A-\me\mu_{\mi\me(A)})}$. Given a
morphism $f\colon A\lxr A'$ in $\A$, then $\mq(f)$ is the induced
morphism in $\B$ between the cokernels
$\mq(f)\colon \mq(A)\lxr \mq(A')$.  For $A=\mi(B)$ in
$(\ref{exactseqdefq})$, it follows that the map
$\me\mu_{\mi(B)}-\me\mu_{\mi\me(\mi(B))}=0$, since
  $\varphi_{\me\mi(B)}  = \me\mi(\varphi_B)$. Therefore we get that
the map $\kappa_{\mi(B)}\colon B\lxr \mq\mi(B)$ is an isomorphism. We
define a natural morphism:
\[
F_{A,B}\colon \Hom_{\A}(A,\mi(B))\lxr \Hom_{\B}(\mq(A), B), \,
f\mapsto \kappa_{\mi(B)}^{-1}  \mq(f)\colon q(A)\lxr B 
\]
Since $\kappa_{\mi(B)}\me(f) = \mq(f)\kappa_A$ and
  $\me$ is faithful,
the map $F_{A,B}$ is injective. To show that $F_{A,B}$ is 
also an epimorphism we need some more work. For every $A$ in $\A$ we
have the short exact sequence $(\ref{firstfundamentalsequence})$. In
particular, using that $\mu_A\colon \ml\me(A)\lxr A$ is surjective for
all objects $A$ in $\A$ we get the following exact sequence:
\[
\xymatrix{
\ml\me\ml\me(A) \ar[rr]^{\ml\me\mu_A-\mu_{\ml\me(A)}} && \ml\me(A) \ar[r]^{\mu_A} & A\ar[r]^{} & 0}
\]
It is shown in \cite[Proposition~2.3]{Bel:Cleft} that the next composition is zero:
\[
\xymatrix{
\ml\me\ml\me(A) \ar[rr]^{\ml\me\mu_A-\mu_{\ml\me(A)}} && \ml\me(A) \ar[r]^{\ml\varphi_{\me(A)}^{-1}} & \ml\me\mi\me(A)\ar[r]^{\mu_{\mi\me(A)}} & \mi\me(A) \ar[r]^{\mi\kappa_A \ } & \mi\mq(A) }
\]
This implies that there is map $\lambda_A\colon A\lxr \mi\mq(A)$ such
that
\begin{equation}
\label{equationcokernel}
\lambda_A  \mu_A = \mi\kappa_A  \mu_{\mi\me(A)} 
\ml\varphi_{\me(A)}^{-1} \tag{$*$}
\end{equation}
Consider now a morphism $g\colon \mq(A)\lxr
B$. Then the composition map
$\mi(g)  \lambda_A\colon A\lxr \mi(B)$ is such that
$F_{A,B}(\mi(g)  \lambda_A)=g$. To see this there is a series of computations that the reader needs to verify. First, we compute that $F_{A,B}(\mi(g)\lambda_A)=\kappa_{\mi(B)}^{-1} \mq\mi(g) \mq(\lambda_A)$ and we have to show that the latter morphism is $g$. The first observation is that $\kappa_{\mi(B)}^{-1}  \mq\mi(g) = \me\mi(g)  \kappa_{\mi\mq(A)}^{-1}$. Using the natural isomorphism $\varphi_B$ and $\varphi_{\mq(A)}$ we obtain that the desired composition $\kappa_{\mi(B)}^{-1} \mq\mi(g) \mq(\lambda_A)$ is $g  \kappa_{\mi\mq(A)}^{-1}  \mq(\lambda_A)$. Using the relation \eqref{equationcokernel}, it follows that $\me \lambda = \kappa$. Since $\kappa_{\mi\mq(A)}  \me\lambda_A=\mq(\lambda_A)  \kappa_A$ using the identification $\me\lambda = \kappa$ we get that $\mq(\lambda_A)  \kappa_A = \kappa_{\mi\mq(A)}  \kappa_A$. Since the map $\kappa_A$ is an epimorphism, it follows that $\mq(\lambda_A)=\kappa_{\mi\mq(A)}$. Thus the desired composition  $g  \kappa_{\mi\mq(A)}^{-1}  \mq(\lambda_A)$ gives the map $g$. This shows that $F_{A,B}$ is
surjective. The details are left to the reader, see also the proof of \cite[Proposition~2.3]{Bel:Cleft}. We infer that $(\mq,\mi)$ is an adjoint pair.

(v) From the adjoint pairs $(\ml, \me)$, $(\mq, \mi)$ and since we have the natural isomorphism $\me\mi\simeq\mathsf{Id}_{\B}$, it follows that there is a natural isomorphism $\mq\ml\simeq \mathsf{Id}_{\B}$ of
functors.
\end{proof}
\end{lem}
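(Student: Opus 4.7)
The plan is to work through the five parts in order, noting that items (i), (ii), (iii), and (v) follow quickly from formal adjunction arguments once the key content of (iv) -- the construction of a right adjoint $\mq$ to $\mi$ -- is in place. For (i), essential surjectivity of $\me$ is immediate from $\me\mi \simeq \mathsf{Id}_\B$, since any $B$ in $\B$ is isomorphic to $\me(\mi(B))$. For (ii), I would deduce faithfulness of $\mi$ by writing $\me\mi(f) = \varphi_{B'}^{-1} f \varphi_B$ via naturality of $\varphi$, so that $\mi(f) = 0$ forces $\me\mi(f) = 0$ and hence $f = 0$. For fullness, given $g \colon \mi(B) \to \mi(B')$, the morphism $\varphi_{B'} \me(g) \varphi_B^{-1}$ is the candidate preimage, which is verified by applying $\me$ and then invoking faithfulness of $\me$. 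For exactness of $\mi$, I would apply $\mi$ to a short exact sequence in $\B$ and then apply $\me$; since $\me\mi \simeq \mathsf{Id}_\B$ the resulting sequence is (up to natural iso) the original one, and faithful exactness of $\me$ transports exactness back to $\A$ (using that $\me(X) = 0$ implies $X = 0$).

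For (iii), preservation of projectives by $\ml$ is standard since $\ml$ is left adjoint to the exact functor $\me$. For faithfulness of $\ml$, the crucial point is that the counit identity \eqref{eq:counit} applied at $A = \mi(B)$, combined with the iso $\varphi_B$, furnishes a retraction $\mu'_B \colon \me\ml(B) \to B$ of the unit $\nu_B$, so $\nu_B$ is a split monomorphism for every $B$. Then naturality of $\nu$ gives $\nu_{B'} f = \me\ml(f) \nu_B$, so that $\ml(f) = 0$ makes the right-hand side vanish, and splitting of $\nu_{B'}$ forces $f = 0$.

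The main obstacle is (iv): constructing the right adjoint $\mq$ of $\mi$. My plan is to take $\mq(A) := \coker(\me\mu_A - \me\mu_{\mi\me(A)} \colon \me\ml\me(A) \to \me(A))$, with morphisms induced by naturality. First I would observe that for $A = \mi(B)$ the two maps $\me\mu_{\mi(B)}$ and $\me\mu_{\mi\me\mi(B)}$ coincide (via $\varphi_{\me\mi(B)} = \me\mi(\varphi_B)$), so the cokernel map $\kappa_{\mi(B)} \colon B \to \mq\mi(B)$ is an isomorphism; this is the counit of the tentative adjunction. I would then define $F_{A,B} \colon \Hom_\A(A, \mi(B)) \to \Hom_\B(\mq(A), B)$ by $f \mapsto \kappa_{\mi(B)}^{-1} \mq(f)$; injectivity follows from faithfulness of $\me$ together with the naturality relation $\kappa_{\mi(B)} \me(f) = \mq(f) \kappa_A$. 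The hard part is surjectivity: I would produce a unit morphism $\lambda_A \colon A \to \mi\mq(A)$ satisfying $\lambda_A \mu_A = \mi\kappa_A \mu_{\mi\me(A)} \ml\varphi_{\me(A)}^{-1}$, whose existence reduces to the vanishing of a composite established in \cite[Proposition~2.3]{Bel:Cleft}. With $\lambda_A$ at hand, the identities $\me\lambda_A = \kappa_A$ and $\mq(\lambda_A) = \kappa_{\mi\mq(A)}$ (the second deduced from the first by epicness of $\kappa_A$) give $F_{A,B}(\mi(g)\lambda_A) = g$, establishing surjectivity and hence the adjunction.

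Finally, (v) is a formal consequence: chaining the adjunctions $(\mq,\mi)$ and $(\ml,\me)$ with the iso $\me\mi \simeq \mathsf{Id}_\B$ yields natural isomorphisms
\[
\Hom_\B(\mq\ml(B), B') \simeq \Hom_\A(\ml(B), \mi(B')) \simeq \Hom_\B(B, \me\mi(B')) \simeq \Hom_\B(B, B'),
\]
and Yoneda delivers $\mq\ml \simeq \mathsf{Id}_\B$.
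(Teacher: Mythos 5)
Your proposal is correct and follows essentially the same route as the paper: the faithfulness, fullness and exactness of $\mi$ are deduced exactly as in the text; $\mq(A)$ is defined as the same coequaliser, the isomorphism $\kappa_{\mi(B)}$, the map $F_{A,B}$, and the unit $\lambda_A$ (via the vanishing from \cite[Proposition~2.3]{Bel:Cleft}) are all the same; and (v) is the standard Yoneda argument the paper leaves implicit. The only stylistic difference is in (iii), where you replace the paper's $\Hom$-set diagram chase with the clean observation that the counit identity makes each $\nu_B$ a split monomorphism, so $\nu_{B'}f = \me\ml(f)\nu_B$ kills $f$ whenever $\ml(f)=0$; this is a tidier phrasing of the same argument.
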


\subsection{Endofunctors}
We saw in \eqref{firstfundamentalsequence} that there is a short exact
sequence 
\[\xymatrix@C=0.5cm{
0 \ar[rr] && \Ker\mu_A \ar[rr]^{} && \ml\me(A) \ar[rr]^{\mu_A} && A
\ar[rr] && 0  }\]
for all $A$ in $\A$.  The assignment $A\mapsto \Ker\mu_A$ defines an
endofunctor $G\colon \A\lxr \A$. Consider now an object $B$ in
$\B$. Then we have the short exact sequence in $\A$: 
\[
\xymatrix@C=0.5cm{
0 \ar[rr] && G(\mi(B)) \ar[rr]^{} && \ml\me(\mi(B)) \ar[rr]^{\mu_{\mi(B)}} && \mi(B)
\ar[rr] && 0  }
\]
and applying the exact functor $\me\colon \A\lxr \B$ we obtain the
exact sequence
\[
\xymatrix@C=0.5cm{
0 \ar[rr] && \me(G(\mi(B))) \ar[rr]^{} && \me(\ml\me(\mi(B))) \ar[rr]^{\me(\mu_{\mi(B)})} && \me(\mi(B))
\ar[rr] && 0  }
\]
We denote by $F(B)$ the object $\me(G(\mi(B)))$. Then the
assignment $B\mapsto F(B)$ defines an endofunctor $F\colon \B\lxr
\B$. Viewing the natural isomorphism $\me\mi(B)\simeq B$ as an
identification, we obtain the exact sequence:
\begin{equation}
\label{splitsequencegivesF}
\xymatrix@C=0.5cm{
0 \ar[rr] && F(B) \ar[rr]^{} && \me(\ml(B)) \ar[rr]^{ \ \ \me(\mu_{\mi(B)})} && B
\ar[rr] && 0  }
\end{equation}
The following lemma is an immediate consequence of \eqref{eq:counit}. 
\begin{lem}
\label{lemsplitexactseq}
Let $(\B,\A,\me, \ml, \mi)$ be a cleft extension of abelian
categories. Then the exact sequence $(\ref{splitsequencegivesF})$
splits.
\end{lem}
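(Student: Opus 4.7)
The plan is to use the counit-unit identity \eqref{eq:counit} directly: the map $\me(\mu_{\mi(B)}) \colon \me\ml(B) \to B$ (after identifying $\me\mi(B)$ with $B$ via $\varphi_B$) has a natural right inverse given by a suitable component of the unit $\nu$, and so the short exact sequence \eqref{splitsequencegivesF} splits.

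More concretely, I would proceed as follows. First, specialize \eqref{eq:counit} to $A = \mi(B)$ to obtain
\[
1_{\me\mi(B)} \;=\; \me(\mu_{\mi(B)}) \circ \nu_{\me\mi(B)} ,
\]
viewed as an identity in $\Hom_\B(\me\mi(B),\me\mi(B))$. Here $\nu_{\me\mi(B)}$ is a morphism $\me\mi(B) \to \me\ml\me\mi(B)$. Next, I would transport this along the natural isomorphism $\varphi_B \colon \me\mi(B) \xrightarrow{\cong} B$, producing a morphism $s_B \colon B \to \me\ml(B)$ (defined, under the identification $\varphi$, as $\nu_B$) which satisfies $\me(\mu_{\mi(B)}) \circ s_B = 1_B$. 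Thus $s_B$ is a section of the epimorphism in \eqref{splitsequencegivesF}, which implies that the short exact sequence splits.

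The only subtlety — and the step where I would be most careful — is the bookkeeping around the identification $\me\mi \simeq \mathsf{Id}_\B$ via $\varphi$: one must check that the naturality of $\varphi$ makes the square relating $\nu_{\me\mi(B)}$ and $\nu_B$ (via $\me\ml(\varphi_B)$) commute, so that the section obtained really lives in $\Hom_\B(B,\me\ml(B))$ and really splits $\me(\mu_{\mi(B)})$ as written. Since $\varphi$ is a natural isomorphism, this is formal, and no further input beyond \eqref{eq:counit} is required. There is no substantive obstacle; the lemma is essentially a transcription of the counit-unit triangle identity through the comparison $\varphi$.
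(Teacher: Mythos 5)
Your proof is correct and is exactly the argument intended by the paper, which simply states that the lemma is ``an immediate consequence of \eqref{eq:counit}'': you specialize the triangle identity $1_{\me(A)} = \me(\mu_A)\nu_{\me(A)}$ to $A=\mi(B)$ and transport along $\varphi_B$ to obtain a section of $\me(\mu_{\mi(B)})$. The bookkeeping with the natural isomorphism $\varphi$ that you flag is indeed the only point requiring care, and it is handled exactly as you describe.
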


We end this section by discussing cleft extensions with special
  properties.  In the application discussed in Section
  \ref{sec:cleft-findim} the square of $F$ is zero, and the functor
  $\ml$ is exact and the functor $\me$ preserves projectives.  The
  remaining results concern cleft extensions having some of these
  properties or generalizations thereof. 
  
\begin{lem}
\label{lemnilpotentfunctors}
Let $(\B,\A,\me, \ml, \mi)$ be a cleft extension of abelian categories.
The following statements hold.
\begin{enumerate}[\rm(i)]
\item For any $n\geq 1$, there is a natural isomorphism $\me G^n\simeq F^n\me$.

\item Let $n\geq 1$. Then $F^n=0$ if and only if $G^n=0$.
\end{enumerate}
\begin{proof}
  (i) Let $A$ be an object in $\A$. We first show that
  $\me(G(A))\simeq F(\me(A))$. From \eqref{firstfundamentalsequence}
  and \eqref{splitsequencegivesF} we obtain the following two short
  exact sequences:
\[
\xymatrix@C=0.5cm{
0 \ar[rr] && \me(G(A)) \ar[rr]^{} && \me\ml\me(A) \ar[rr]^{\me(\mu_A)} && \me(A) \ar[rr] && 0  }
\]
and 
\[
\xymatrix@C=0.5cm{
0 \ar[rr] && F(\me(A)) \ar[rr]^{} && \me\ml\me(\mi\me(A)) \ar[rr]^{ \ \ \me(\mu_{\mi\me(A)})} && \me\mi\me(A)
\ar[rr] && 0  }
\]
where by definition $F(\me(A))=\me G(\mi\me(A))$. From
Lemma~\ref{lemsplitexactseq} the above two exact sequences split. Then
using the natural isomorphism
$\varphi\colon \me\mi\lxr \mathsf{Id}_{\B}$ we obtain the following
exact commutative diagram:
\[
\xymatrix@C=0.5cm{
0 \ar[rr] && \me\mi\me(A) \ar[d]_{\simeq}^{\varphi_{\me(A)}} \ar[rr]^{\nu_{\me\mi\me(A)} \ } && \me\ml(\me\mi\me(A)) \ar[d]_{\simeq}^{\me\ml\varphi_{\me(A)}} \ar[rr]^{} && F(\me(A)) \ar[rr] \ar@{-->}[d] && 0 \\
0 \ar[rr] && \me(A) \ar[rr]^{\nu_{\me(A)} \ } && \me\ml\me(A) \ar[rr]^{}  && \me(G(A)) \ar[rr]  && 0 }
\]
From the Snake Lemma it follows that $\me(G(A))\simeq F(\me(A))$.  By
induction it follows that $\me G^n(A) \simeq F^n\me(A)$ for all $n\geq
1$.  Hence $\me G^n \simeq F^n\me$ for all $n\geq 1$. 

(ii) Let $n\geq 1$.  If $F^n=0$, then $\me G^n = 0$ by (i).  Since
$\me$ is faithful, $G^n = 0$.  If $G^n = 0$, then $F^n\me = 0$ by
(i).  Since $\me$ is essentially surjective by Lemma \ref{basicproperties} (i),
we infer that $F^n = 0$. This completes the proof. 
\end{proof}
\end{lem}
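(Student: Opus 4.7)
The plan is to establish (i) by induction on $n$, the base case being a snake-lemma argument, and then deduce (ii) from (i) using the faithfulness and the essential surjectivity of $\me$ (Lemma~\ref{basicproperties}(i)).

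For the base case $n=1$ of (i), I would apply the exact functor $\me$ to the defining sequence \eqref{firstfundamentalsequence} for $G$ at an object $A$, obtaining a short exact sequence with kernel $\me G(A)$; this sequence is split because, after substituting $A$ by $\mi\me(A)$, its analogue is exactly \eqref{splitsequencegivesF} evaluated at $B=\me(A)$, which is split by Lemma~\ref{lemsplitexactseq}. Rewriting both split sequences in the form $0\to(\text{left})\xrightarrow{\nu}\me\ml(\text{left})\to(\text{cokernel})\to 0$, whose cokernels are $\me G(A)$ and $F\me(A)$ respectively, and taking the vertical arrows on the left and in the middle to be the natural isomorphisms $\varphi_{\me(A)}$ and $\me\ml(\varphi_{\me(A)})$, the naturality of the unit $\nu$ makes the resulting square commute. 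The snake lemma then yields an induced isomorphism $F\me(A)\lxr\me G(A)$, and this isomorphism is natural in $A$ because $\varphi$, $\nu$, and $\mu$ all are.

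The inductive step is formal: assuming $\me G^{n-1}\simeq F^{n-1}\me$, one computes
\[ \me G^n \;=\; (\me G)G^{n-1} \;\simeq\; (F\me)G^{n-1} \;=\; F(\me G^{n-1}) \;\simeq\; F(F^{n-1}\me) \;=\; F^n\me. \]
For (ii): if $F^n=0$, then $\me G^n\simeq F^n\me=0$ by (i); since $\me$ is faithful it reflects zero objects (as $\me(X)=0$ forces $1_{\me(X)}=\me(1_X)=0$, hence $1_X=0$, hence $X=0$), so $G^n=0$. Conversely, if $G^n=0$, then $F^n\me=0$ by (i), and by essential surjectivity of $\me$ every $B\in\B$ is isomorphic to some $\me(A)$, so $F^n(B)=0$ for all $B$.

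The main obstacle is the $n=1$ case of (i): both $\me G(A)$ and $F\me(A)$ sit inside $\me\ml\me(A)$, but the surjections to $\me(A)$ differ by the insertion of $\mi\me$ in the counit. The task is to organise the data so that the snake lemma delivers a genuine natural transformation rather than merely an ad hoc isomorphism of objects; once the commutative square built from $\nu$ and $\varphi$ is in place, the rest is formal bookkeeping.
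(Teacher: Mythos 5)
Your proof is correct and follows essentially the same route as the paper's: you flip the two split sequences so that both monomorphisms are units $\nu$, form the commutative square using $\varphi$ together with naturality of $\nu$, and extract the isomorphism between the cokernels $\me G(A)$ and $F\me(A)$ by the snake lemma, then finish (ii) via faithfulness and essential surjectivity of $\me$ exactly as in the text. The only small wrinkle is the justification that the sequence $0\to\me G(A)\to\me\ml\me(A)\to\me(A)\to 0$ splits: rather than arguing by substituting $A\mapsto\mi\me(A)$, it is cleaner to cite \eqref{eq:counit} directly (which exhibits $\nu_{\me(A)}$ as a section of $\me(\mu_A)$) — the same identity underlying Lemma~\ref{lemsplitexactseq}.
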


\begin{rem}
There is a dual notion of a cleft extension for abelian categories. More precisely, a cleft coextension of an abelian category $\B$ is an abelian category $\A$ together with functors:
\[
\xymatrix@C=0.5cm{
\B \ar[rrr]^{\mi} &&& \A \ar[rrr]^{\me} &&& \B
\ar @/^1.5pc/[lll]_{\mr} } 
\]
such that the functor $\me$ is faithful exact, $(\me,\mr)$ is an adjoint pair of functors, and there is a natural isomorphism $\me\mi\simeq \mathsf{Id}_{\B}$ of functors. In this case, we can derive as in Lemma~\ref{basicproperties} similar properties for a cleft coextension. It is very interesting when a cleft extension of abelian categories $(\B,\A,\me, \ml, \mi)$ is also a cleft coextension. Indeed, this holds if and only if the endofunctor $F$ appearing in the split exact sequence $(\ref{splitsequencegivesF})$ has a right adjoint. In this case, we have the following diagram of functors:
\begin{equation}
\label{diagramdualcleft}
\xymatrix@C=0.5cm{
\B  \ar@(ul,dl)_{F} \ar[rrr]^{\mi} &&& \A  \ar@(ul,ur)^{G} \ar[rrr]^{\me} \ar @/_1.5pc/[lll]_{\mq}  \ar
 @/^1.5pc/[lll]_{\map} &&& \B  \ar@(ur,dr)^{F}
\ar @/_1.5pc/[lll]_{\ml} \ar
 @/^1.5pc/[lll]_{\mr} }
\end{equation}
All these concepts are due to Beligiannis \cite{Bel:Cleft,
  Bel:Relativecleft}, in particular, see \cite[subsection 2.4]{Bel:Relativecleft} for a thorough discussion of cleft coextensions of abelian categories. We remark that diagram~\ref{diagramdualcleft} will be studied further in Section~\ref{sec:trivext}.
\end{rem}

From now on we make the following assumption on a cleft extension $(\B,\A,\me, \ml, \mi)$ of abelian categories:
\begin{equation}
\label{assumptionscleftext}
\text{The functor} \ \ml \ \text{is exact and the functor} \ \me \ \text{preserves projectives.}
\end{equation}
Note that if a cleft extension $(\B,\A,\me, \ml, \mi)$ is also a cleft coextension, i.e. we have diagram $(\ref{diagramdualcleft})$, then $\mr$ being exact implies that $\me$ preserves projectives. 

We continue with the following useful results.

\begin{lem}
\label{corpropofF}
Let $(\B,\A, \me, \ml, \mi)$ be a cleft extension of abelian categories such that condition $(\ref{assumptionscleftext})$ holds. Then the functor $F$ is exact and preserves projective objects.
\begin{proof}
From Lemma~\ref{lemsplitexactseq} it follows that there is an
  isomorphism of functors $\me\ml \simeq F \oplus 1_\B$. Since the functors
$\me$ and $\ml$ are exact, we infer that $F$ is exact. 

By Lemma \ref{basicproperties} (iii) the functor $\ml$ preserves
projective objects, and by assumption the functor $\me$ has the same
property.  Then the isomorphism $\me\ml \simeq F \oplus 1_\B$ show
that $F$ preserves projective objects.
\end{proof}
\end{lem}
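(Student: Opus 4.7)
The plan is to deduce both properties from the functorial splitting of the exact sequence \eqref{splitsequencegivesF}. First I would observe that, since Lemma~\ref{lemsplitexactseq} shows the short exact sequence
\[
0 \lxr F(B) \lxr \me\ml(B) \lxr B \lxr 0
\]
splits for every $B$ in $\B$, and since the constituent maps are natural in $B$, one obtains an isomorphism of functors $\me\ml \simeq F \oplus \mathsf{Id}_{\B}$.

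Next, I would handle exactness. By hypothesis $\ml$ is exact, and $\me$ is exact by Definition~\ref{defncleftext}(a), so their composite $\me\ml$ is exact. The identity functor $\mathsf{Id}_\B$ is trivially exact, and a direct summand of an exact functor between abelian categories is exact (apply the functor to a short exact sequence and split the resulting sequence as a direct sum). Hence $F$ is exact.

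For the preservation of projectives, I would combine Lemma~\ref{basicproperties}(iii), which states that $\ml$ preserves projective objects, with assumption \eqref{assumptionscleftext}, which states that $\me$ preserves projective objects. Therefore $\me\ml$ preserves projectives, so for any projective $P$ in $\B$, the object $\me\ml(P) \simeq F(P)\oplus P$ is projective in $\B$; as a direct summand of a projective object is projective, we conclude that $F(P)$ is projective.

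I do not expect any serious obstacle here: the entire argument rides on the naturality of the splitting in Lemma~\ref{lemsplitexactseq} (which is already implicit in the way $F$ is defined from the unit/counit data), together with the standard fact that exactness and projectivity are inherited by direct summands.
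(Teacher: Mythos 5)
Your proof is correct and follows essentially the same route as the paper: derive the functorial splitting $\me\ml \simeq F \oplus \mathsf{Id}_{\B}$ from Lemma~\ref{lemsplitexactseq}, then transfer exactness and preservation of projectives to the direct summand $F$. The only difference is that you spell out the standard facts about direct summands (of exact functors and of projective objects) more explicitly than the paper does.
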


\begin{lem}\label{lem:FGequivalence}
Let $(\B,\A, \me, \ml, \mi)$ be a cleft extension of abelian categories such that condition $(\ref{assumptionscleftext})$ holds. Moreover, assume that $F^n=0$ for some $n\geq 2$. Let $A$ be an object in $\A$. Then the following statements are equivalent:
\begin{enumerate}[\rm(i)]
\item $G^{n-1}(A)$ lies in $\Proj\A$. 

\item $F^{n-1}\me(A)$ lies in $\Proj\B$. 
\end{enumerate}
\begin{proof}
  (i)$\Longrightarrow$(ii) Assume that $G^{n-1}(A)$ is projective in
  $\A$.  By Lemma \ref{basicproperties} (vii) the functor $\me$
  preserves projective objects, hence $\me G^{n-1}(A)$ is projective
  in $\B$.  By Lemma \ref{lemnilpotentfunctors} (i) the objects
  $\me G^{n-1}(A)$ and $F^{n-1}\me(A)$ are isomorphic, and therefore
  $F^{n-1}\me(A)$ is projective in $\B$.

  (ii)$\Longrightarrow$(i) Asume that $F^{n-1}\me(A)$ is projective in
  $\B$. Then $\ml F^{n-1}\me(A)$ lies in $\Proj\A$ by Lemma
  \ref{basicproperties} (iii), and therefore from
  Lemma~\ref{lemnilpotentfunctors} the object $\ml\me G^{n-1}(A)$ is
  projective.  Moreover, Lemma~\ref{lemnilpotentfunctors} (ii) yields
  that $G^n=0$ by our assumption.  Then, if we consider the exact
  sequence $(\ref{firstfundamentalsequence})$ for the object
  $G^{n-1}(A)$, it follows that the map
  $\mu_{G^{n-1}(A)}\colon \ml\me G^{n-1}(A)\lxr G^{n-1}(A)$ is an
  isomorphism. Hence, the object $G^{n-1}(A)$ is projective.
\end{proof}
\end{lem}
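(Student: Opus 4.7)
The plan is to prove both implications directly from the natural isomorphism $\me G^n \simeq F^n \me$ established in Lemma~\ref{lemnilpotentfunctors}~(i), combined with the projectivity-preserving properties of $\me$ and $\ml$ and the fundamental short exact sequence \eqref{firstfundamentalsequence}.

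For the direction (i)$\Longrightarrow$(ii), I would start with the assumption that $G^{n-1}(A)$ is projective in $\A$ and apply the functor $\me$. Since $\me$ preserves projectives by the standing assumption \eqref{assumptionscleftext}, the object $\me G^{n-1}(A)$ is projective in $\B$. The natural isomorphism $\me G^{n-1} \simeq F^{n-1}\me$ from Lemma~\ref{lemnilpotentfunctors}~(i) then transports this projectivity to $F^{n-1}\me(A)$, which is what we want.

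For the direction (ii)$\Longrightarrow$(i), I would proceed similarly but in reverse, applying $\ml$ instead of $\me$. If $F^{n-1}\me(A)$ is projective in $\B$, then $\ml F^{n-1}\me(A)$ is projective in $\A$ by Lemma~\ref{basicproperties}~(iii), and the same natural isomorphism gives that $\ml\me G^{n-1}(A)$ is projective. To descend from projectivity of $\ml\me G^{n-1}(A)$ to projectivity of $G^{n-1}(A)$ itself, I would apply the fundamental sequence \eqref{firstfundamentalsequence} to the object $G^{n-1}(A)$, obtaining
\[
0 \lxr G\bigl(G^{n-1}(A)\bigr) \lxr \ml\me\bigl(G^{n-1}(A)\bigr) \xr{\mu_{G^{n-1}(A)}} G^{n-1}(A) \lxr 0.
\]
Here the key observation is that $G(G^{n-1}(A)) = G^n(A)$, which vanishes because $F^n = 0$ forces $G^n = 0$ by Lemma~\ref{lemnilpotentfunctors}~(ii). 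Therefore $\mu_{G^{n-1}(A)}$ is an isomorphism and $G^{n-1}(A)$ inherits projectivity from $\ml\me G^{n-1}(A)$.

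I do not anticipate a genuine obstacle here; the argument is essentially bookkeeping with the two natural isomorphisms of Lemma~\ref{lemnilpotentfunctors} and the fundamental sequence. The only point that requires the hypothesis $n \geq 2$ is implicitly that $G^{n-1}$ is defined as a non-trivial iterate, while the vanishing $G^n = 0$ at the very last stage is precisely what collapses the fundamental sequence to an isomorphism. The projectivity-preservation properties of $\me$ and $\ml$ do all the real work, so both implications end up quite symmetric.
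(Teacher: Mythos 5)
Your argument is correct and matches the paper's proof step for step: both directions use the natural isomorphism $\me G^{n-1} \simeq F^{n-1}\me$ from Lemma~\ref{lemnilpotentfunctors}~(i) together with the projectivity-preservation of $\me$ and $\ml$, and the converse direction closes by noting that $G^n = 0$ (via Lemma~\ref{lemnilpotentfunctors}~(ii)) collapses the fundamental sequence \eqref{firstfundamentalsequence} at $G^{n-1}(A)$ to an isomorphism. No meaningful difference from the paper's argument.
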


\section{Recollements of abelian categories}
\label{sectionrecolabcat}

We start this section by recalling the definition of a recollement
situation in the context of abelian categories, see for instance
\cite{Psaroud:survey} and references therein, we fix notation and
recall some well known properties of recollements which are used later
in the paper. We also introduce a relative homological dimension in a recollement $(\A,\B,\C)$ which is relevant for the inclusion functor $\mi\colon \A\lxr \B$ to be a homological embedding and provides other properties of the recollement. Finally, we recall when a functor between abelian categories is an eventually homological isomorphism and characterize this property for the quotient functor in a recollement.

We begin by recalling the definition of a recollement of abelian categories.

\begin{defn}
\label{defnrec}
A {\bf recollement situation} between 
abelian categories $\A,\B$ and $\C$ is a diagram
\[
\xymatrix@C=0.5cm{
\A \ar[rrr]^{\mi} &&& \B \ar[rrr]^{\me}  \ar @/_1.5pc/[lll]_{\mathsf{\mq}}  \ar
 @/^1.5pc/[lll]^{\mp} &&& \C
\ar @/_1.5pc/[lll]_{\ml} \ar
 @/^1.5pc/[lll]^{\mr}
 } 
\]
henceforth denoted by $(\A,\B,\C)$, satisfying the following
conditions:
\begin{enumerate}[\rm(a)]
\item $(\ml,\me)$ and $(\me,\mr)$ are adjoint pairs.
\item $(\mq,\mi)$ and $(\mi,\mp)$ are adjoint pairs.
\item The functors $\mi$, $\ml$, and $\mr$ are fully faithful.
\item $\Image\mi = \Ker \me$.
\end{enumerate}
\end{defn}

We collect some basic properties of a recollement of abelian categories. They can be derived easily from
Definition~\ref{defnrec}, for more details see \cite{Psaroud:survey}.

\begin{prop}
\label{properties}
Let $(\A,\B,\C)$ be a recollement of abelian categories.  Then the
following hold.
\begin{enumerate}[\rm(i)]
\item The functors $\mi\colon \A\lxr \B$ and $\me\colon \B\lxr \C$ are
exact.
\item The compositions $\me\mi$, $\mq\ml$ and $\mp\mr$ are zero.
\item The functor $\me\colon \B\lxr \C$ is essentially surjective.
\item The units of the adjoint pairs $(\mi,\mp)$ and $(\ml,\me)$ and the
counits of the adjoint pairs $(\mq,\mi)$ and $(\me,\mr)$ are
isomorphisms:
\[
\iden_\A \xrightarrow{\simeq} \mp\mi \qquad
\iden_\C \xrightarrow{\simeq} \me\ml \qquad
\mq\mi \xrightarrow{\simeq} \iden_\A  \qquad
\me\mr \xrightarrow{\simeq} \iden_\C
\]
\item The functors $\ml\colon \C\lxr \B$ and $\mq\colon \B\lxr \A$
preserve projective objects and the functors $\mr\colon \C\lxr \B$ and
$\mp\colon \B\lxr \A$ preserve injective objects.
\item The functor $\mi\colon \A\lxr \B$ induces an equivalence between
$\A$ and the Serre subcategory $\Ker\me = \Image\mi$ of $\B$.  Moreover,
$\A$ is a localizing and colocalizing subcategory of $\B$ and there is
an equivalence of categories $\B/\A \simeq \C$.
\item For every $B$ in $\B$ there are objects $A$ and $A'$ in $\A$ such that
the units and counits of the adjunctions induce the following exact
sequences:
\begin{equation}\label{Bsequence}
 \xymatrix{
  0 \ar[r]^{ } & \mi(A) \ar[r]^{ } & \ml\me(B) \ar[r]^{ } &
  B  \ar[r]^{ } & \mi\mq(B) \ar[r] & 0  }
\end{equation}
and 
\[
\xymatrix{
  0 \ar[r]^{ } & \mi\mp(B) \ar[r]^{ } & B \ar[r]^{ } &
  \mr\me(B)  \ar[r] & \mi(A') \ar[r] & 0  }
\]  
\end{enumerate}
\end{prop}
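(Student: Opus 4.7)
The plan is to derive all seven items directly from Definition~\ref{defnrec}, using only the three axioms: the two pairs of adjunctions, the fully faithfulness of $\mi$, $\ml$, $\mr$, and $\Image\mi=\Ker\me$. I will proceed item by item in an order that allows later parts to reuse earlier ones.

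For (i), since $\mi$ admits both a left adjoint $\mq$ and a right adjoint $\mp$, it preserves both limits and colimits, hence it is exact; the same reasoning applies to $\me$ via the adjoints $\ml$ and $\mr$. For (ii), the equality $\Image\mi=\Ker\me$ yields $\me\mi=0$. Using this with the adjunctions, for any $C\in\C$ and $A\in\A$
\[
\Hom_\A(\mq\ml(C),A)\simeq \Hom_\B(\ml(C),\mi(A))\simeq \Hom_\C(C,\me\mi(A))=0,
\]
so by Yoneda $\mq\ml=0$; the dual calculation via $(\mi,\mp)$ and $(\me,\mr)$ gives $\mp\mr=0$. Part (iii) will then follow from (iv), as $\iden_\C\simeq \me\ml$ immediately forces $\me$ to be essentially surjective.

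For (iv) I would invoke the standard fact that in an adjoint pair $(\mf,\mg)$ the right adjoint $\mg$ is fully faithful if and only if the counit $\mf\mg\to\iden$ is a natural isomorphism, and dually for the left adjoint and the unit. Applied to $(\mq,\mi)$ with $\mi$ fully faithful gives $\mq\mi\xrightarrow{\simeq}\iden_\A$; applied to $(\mi,\mp)$ gives $\iden_\A\xrightarrow{\simeq}\mp\mi$; similarly $(\ml,\me)$ and $(\me,\mr)$ with $\ml$, $\mr$ fully faithful yield the remaining two isomorphisms. For (v) I would use (i): since $\me$ is exact and $(\ml,\me)$ is an adjoint pair, the left adjoint $\ml$ sends projectives to projectives, because $\Hom_\B(\ml(P),-)\simeq \Hom_\C(P,\me(-))$ is a composition of exact functors; the analogous arguments using exactness of $\mi$ and $\me$ show that $\mq$ preserves projectives, $\mr$ and $\mp$ preserve injectives.

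For (vi), the equivalence $\A\simeq \Ker\me$ comes from $\mi$ being fully faithful with image exactly $\Ker\me$ (both facts are in the hypotheses), and $\Ker\me$ is a Serre subcategory because $\me$ is exact. The adjunctions $(\mq,\mi)$ and $(\mi,\mp)$ produce a reflection and coreflection of $\B$ onto $\Image\mi$, so $\A$ is localizing and colocalizing, and $\me\colon \B\to\C$ is (up to equivalence) the Serre quotient by $\A$ by a universal property argument on $\B/\A$. Finally, for (vii) I would take, for a fixed $B\in\B$, the counit $\ml\me(B)\to B$ and the unit $B\to\mi\mp(B)$ (and dually unit $\mi\mq(B)$, counit from $\mr\me(B)$); applying $\me$ and using $\me\mi=0$ together with $\me\ml\simeq\iden$ and $\me\mr\simeq\iden$ from (iv) shows the kernel and cokernel of $\ml\me(B)\to B\to\mi\mq(B)$ lie in $\Ker\me=\Image\mi$, producing the required four-term exact sequence, and dually for the other one.

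The main subtlety, rather than any computational obstacle, is matching the correct adjunction to the correct diagram in (vii): one must carefully identify which object lies in $\mi(\A)$ by applying $\me$ and invoking the appropriate iso from (iv), and one must ensure that the morphisms arising as units and counits compose to zero (which again reduces to $\me\mi=0$). Once this bookkeeping is done, all seven assertions reduce to direct applications of the definition and Yoneda.
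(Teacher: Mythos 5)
The paper itself gives no proof here and merely cites the survey \cite{Psaroud:survey}, so there is nothing to match against directly; your arguments for (i)--(vi) are the standard ones and are correct. Parts (i)--(v) are clean applications of the adjunction axioms and $\Image\mi=\Ker\me$, and (vi) is an adequate sketch.

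Part (vii) has a genuine gap. You argue that applying $\me$ and using $\me\mi=0$, $\me\ml\simeq\iden_\C$ shows ``the kernel and cokernel of $\ml\me(B)\to B\to\mi\mq(B)$ lie in $\Ker\me=\Image\mi$, producing the required four-term exact sequence.'' But knowing that an object lies in $\Image\mi$ does not make it vanish, so this alone produces neither the exactness at $B$ nor the surjectivity of $\eta_B\colon B\to\mi\mq(B)$. What is missing is the other half of the recollement data: for an object $X$ in $\Image\mi$ one has $X\simeq \mi\mq(X)$, so $\mq(X)=0$ forces $X=0$, and equivalently every morphism from $B$ to an object of $\Image\mi$ factors uniquely through the unit $\eta_B$. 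Using the adjunction isomorphisms $\Hom_\B(\mi\mq(B),\mi(Y))\simeq\Hom_\A(\mq(B),Y)\simeq\Hom_\B(B,\mi(Y))$ and $\Hom_\B(\ml\me(B),\mi(Y))\simeq\Hom_\C(\me(B),\me\mi(Y))=0$, one sees that $\eta_B$ is the reflection of $B$ into $\Image\mi$: the canonical map $B\to\Coker\mu_B$ must factor through $\eta_B$, which yields $\Ker\eta_B=\Image\mu_B$, and the uniqueness of the factorisation forces $\Coker\eta_B=0$. The dual argument with $(\mi,\mp)$ and $(\me,\mr)$ gives the second sequence. You flagged the bookkeeping of units and counits as the subtle point, so also note that ``the unit $B\to\mi\mp(B)$'' is not one of the natural transformations in play; the second sequence begins with the counit $\mi\mp(B)\to B$ of $(\mi,\mp)$ and continues with the unit $B\to\mr\me(B)$ of $(\me,\mr)$.
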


A well-known example of a recollement of abelian categories is
  induced from a ring and an idempotent as we explain next.
  
\begin{exam}
\label{examrecolmodcat}
Let $R$ be a ring with an idempotent element $e \in R$. Then the diagram 
\[
\xymatrix@C=0.5cm{
\Mod{R/ReR} \ar[rrr]^{\inc} &&& \Mod{R} \ar[rrr]^{e(-)} \ar
@/_1.5pc/[lll]_{-\otimes_R R/ReR}  \ar
 @/^1.5pc/[lll]^{\Hom_R(R/ReR,-)} &&& \Mod{eRe}
\ar @/_1.5pc/[lll]_{-\otimes_{eRe} eR} \ar
 @/^1.5pc/[lll]^{\Hom_{eRe}(Re,-)}
 } 
\]
is a recollement of abelian categories. It should be noted that this
recollement is the universal example of a recollement situation with
terms categories of modules. Indeed, from \cite{PsaroudVitoria} we
know that any recollement of module categories is equivalent, in an
appropriate sense, to one induced by an idempotent element as above.
\end{exam}
There are even further functors that are naturally associated to a
recollement of abelian categories $(\A,\B,\C)$.  One such functor is obtained from
the exact sequence \eqref{Bsequence}: We let $H\colon \B\lxr \B$ be
the endofunctor defined by the short exact sequence
\begin{equation}
\label{endofunctorexactseq}
\xymatrix{
  0 \ar[r]^{ } & H(B) \ar[r]^{ } &
  B  \ar[r]^{ } & \mi\mq(B) \ar[r] & 0  }
\end{equation}
on all objects $B$ of $\B$. The endofunctor $H$ is an idempotent radical subfunctor of the identity functor $\iden_{\B}$, see \cite[Proposition 3.5]{Psaroud}. This endofunctor is useful in connection with the next relative dimensions, see Proposition~\ref{recollfiniteArelglodimofB} below.

Let $(\A,\B,\C)$ be a recollement of abelian categories. In
\cite{Psaroud} the notion of $\A$-relative global
dimension\footnote{Note that in \cite{Psaroud} the notation was
${\pgld}_{\A}{\B}$.} of $\B$
was defined as follows:
\[
{\pgld}_{\B}{\A}:=\sup \{\pd_{\B}\mi(A) \ | \ A\in \A\}
\]
We call this the $\A$-{\bf relative projective global dimension} of
$\B$. Dually, we define the
$\A$-{\bf relative injective global dimension} of $\B$ by
\[
{\igld}_{\B}{\A}:=\sup \{{\id}_{\B}\mi(A) \ | \ A\in \A\}.
\]

Our aim is to explore the homological behaviour of a recollement
$(\A,\B,\C)$ under the finiteness of $\pgld_{\B}{\A}$ or
$\igld_{\B}{\A}$.

From Proposition~\ref{properties} we know that the functor
$\mr\colon \C\lxr \B$ preserves injective objects. Under the finiteness
of $\igld_{\B}{\A}$, we show that the functor $\mr$ preserves objects of
finite injective dimension even though it is not an exact
functor in general. Similar considerations hold for the functor $\ml$.

\begin{lem}
\label{lempigldfinite}
Let $(\A,\B,\C)$ be a recollement of abelian categories.
\begin{enumerate}[\rm(i)]
\item If $\igld_{\B}\A<\infty$, then the functor $\mr\colon \C\lxr \B$
  preserves objects of finite injective dimension.

\item If $\pgld_{\B}\A<\infty$, then the functor $\ml\colon \C\lxr \B$
  preserves objects of finite projective dimension.
\end{enumerate}
\begin{proof}
We only prove (i) as (ii) is shown by similar arguments. 

For any object $C$ in $\C$ we show that the following formula holds:
\begin{equation}
\label{formulainjectdim}
{\id}_{\B}\mr(C) \leq {\id}_{\C}C+{\igld}_{\B}\A+1
\end{equation}
Then it follows immediately that the functor $\mr$ preserves objects
of finite injective dimension. Note that the dual formula is proved in
\cite[Lemma 4.3]{Psaroud}, so (ii) follows as well. For readers
convenience we prove formula $(\ref{formulainjectdim})$.

Assume that $\igld_{\B}\A=n<\infty$. If the object $C$ is injective,
then $\mr(C)$ is injective and therefore the relation
$(\ref{formulainjectdim})$ holds. Suppose that the object $C$ has
injective dimension one. This means that there is an exact sequence
\[
 \xymatrix{
  0 \ar[r]^{} & C \ar[r]^{a^0} & I^0 \ar[r]^{a^1} &
  I^1  \ar[r]^{ } & 0  }
\] 
with $I^0$ and $I^1$ in $\Inj{\C}$. Applying the left exact functor $\mr$ we obtain the following exact sequence
\[
 \xymatrix{
  0 \ar[r]^{} & \mr(C) \ar[r]^{\mr(a^0)} & \mr(I^0) \ar[r]^{\mr(a^1)} &
  \mr(I^1) \ar[r] & \mathsf{R}^1\mr(C)  \ar[r]^{ } & 0  }
\] 
where $\mathsf{R}^1\mr(C)$ is the first right derived functor of
$\mr$. If we apply the exact functor $\me\colon \B\lxr \C$ and by
Proposition~\ref{properties} (iv), we derive that the object
$\mathsf{R}^1\mr(C)$ is annihilated by $\me$. This implies that
$\mathsf{R}^1\mr(C)$ lies in $\mi(\A)$ which by our assumption satisfies $\id_{\B}\mathsf{R}^1\mr(C)\leq n$. Thus, we have
$\id_{\B}\Coker\mr(a^0)\leq n+1$ and therefore we conclude that
$\id_{\B}\mr(C)\leq 1+n+1$. Continuing inductively on the length of
the injective resolution of $C$ we get formula
$(\ref{formulainjectdim})$ and this completes the proof.
\end{proof}
\end{lem}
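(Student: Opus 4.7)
The plan is to prove (i) by establishing the quantitative bound
\[
\id_{\B}\mr(C) \leq \id_{\C}C + \igld_{\B}\A + 1
\]
by induction on $\id_{\C}C$; statement (ii) will then follow by dualizing every step to the adjoint pair $(\ml,\me)$, replacing injective copresentations by projective presentations and right derived functors by left derived ones.

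For the base case, if $C$ is injective in $\C$ then Proposition~\ref{properties}(v) gives that $\mr(C)$ is injective in $\B$, so the inequality is trivial. For the inductive step, I would choose a short exact sequence $0 \to C \to I^0 \to C' \to 0$ in $\C$ with $I^0$ injective and $\id_{\C}C' = \id_{\C}C - 1$, apply the left exact functor $\mr$, and examine the long exact sequence of right derived functors. Since $I^0$ is injective and $\mr$ preserves injectives, the derived functors $\mathsf{R}^i\mr(I^0)$ vanish for $i \geq 1$, which produces a four-term exact sequence
\[
0 \to \mr(C) \to \mr(I^0) \to \mr(C') \to \mathsf{R}^1\mr(C) \to 0
\]
together with isomorphisms $\mathsf{R}^{i+1}\mr(C) \cong \mathsf{R}^i\mr(C')$ for all $i \geq 1$.

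The key observation is that every higher derived functor of $\mr$ lands in $\Image \mi$. Applying the exact functor $\me$ to an injective resolution of $C$ and using the natural isomorphism $\me\mr \cong \iden_{\C}$ from Proposition~\ref{properties}(iv), one sees that $\me\mathsf{R}^i\mr(C) = 0$ for all $i \geq 1$, so by condition (d) of Definition~\ref{defnrec} the object $\mathsf{R}^i\mr(C)$ lies in $\Image \mi$. The hypothesis $\igld_{\B}\A = n < \infty$ then forces $\id_{\B}\mathsf{R}^i\mr(C) \leq n$.

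To finish, I would splice the four-term exact sequence into two short exact sequences through the image of $\mr(I^0) \to \mr(C')$ and combine the inductive bound on $\id_{\B}\mr(C')$ with the bound on $\id_{\B}\mathsf{R}^1\mr(C)$ via standard dimension-shifting (exploiting that $\mr(I^0)$ is injective). The main obstacle is the careful bookkeeping at this last splicing step to confirm the exact constant $+1$ in the displayed formula; the rest of the argument is essentially formal manipulation of the adjunctions $(\ml,\me)$, $(\me,\mr)$, the recollement relations in Proposition~\ref{properties}, and the identification $\Ker\me = \Image\mi$.
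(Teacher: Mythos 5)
Your proposal is correct and follows essentially the same route as the paper: prove the quantitative bound $\id_{\B}\mr(C)\leq\id_{\C}C+\igld_{\B}\A+1$ by induction on $\id_{\C}C$, using the facts that $\mr$ preserves injectives, that $\me$ is exact with $\me\mr\simeq\iden_{\C}$ so the higher right derived functors $\mathsf{R}^{i}\mr(C)$ lie in $\Image\mi=\Ker\me$, and that $\igld_{\B}\A<\infty$ then bounds their injective dimension. The only cosmetic difference is that you run the induction via a single cosyzygy $C'$ at each step, whereas the paper works with the full injective coresolution and shortens it step by step; the splicing and dimension shifting you describe is exactly what the paper carries out.
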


For an injective coresolution $0\lxr B\lxr I^0\lxr I^1\lxr \cdots$ of
$B$ in $\B$, the image of the morphism $I^{m-1}\lxr I^m$ is an $m$-th
cosyzygy of $B$ and is denoted by $\Sigma^m(B)$. The full subcategory
of $\B$ consisting of the $m$-th cosyzygy objects is denoted by
$\Sigma^m(\B)$.

The finiteness of the $\A$-relative projective global dimension of
$\B$ implies also that $\sup\{\pd_{\B}\mi(P) \ | \ P\in \Proj\A\}$ is
finite. In the following result we characterize the finiteness of the
above number, compare this with \cite[Remark 4.6, Proposition
4.15]{Psaroud}. Recall from \cite{Psaroud} that an exact functor
$\mi\colon \A\lxr \B$ between abelian categories is a {\bf homological
  embedding} if the map $\mi^n_{X,Y}\colon\Ext^n_{\A}(X,Y)\lxr
\Ext^n_{\B}(\mi(X),\mi(Y))$ is an isomorphism for every pair of objects $X,Y$ in $\A$ and for all $n\geq 0$. To this end
we need to introduce the following notation.   

Let $\X \subseteq \A$ be a full subcategory. For integers
$0 \leq i \leq k$ we denote by $\X^{{_i\bot_k}}$ the full
subcategory of $\A$ which is defined by
\[
\X^{{_i\bot_k}}=\{A\in \A \ | \ \Ext_{\A}^n(\X, A)=0, \ \forall \
    i\leq n\leq k \}.
\] 
We also denote by $\X^{{_1\bot_\infty}}$ the full subcategory of
$\A$ defined by
\[
\X^{{_1\bot_\infty}} = \{A \in \A \ | \ \Ext_{\A}^n(\X, A)=0, \
  \forall n\geq 1\}.
\] 
Similarly we define the full subcategories $^{{_i\bot_k}}\X$ and
$^{{_1\bot_{\infty}}}\X$.

\begin{prop}\label{recollfiniteArelglodimofB}
Let $(\A, \B, \C)$ be a recollement of abelian categories. Assume
that $\B$ has enough projectives and injectives. The following are
equivalent:
\begin{enumerate}[\rm(i)]
\item $\sup\{\pd_{\B}\mi(P) \ | \ P\in \Proj\A\}\leq m$.

\item $\Sigma^m(\B)\subseteq \mi(\Proj\A)^{{_1\bot_{\infty}}}$.

\item For every projective object $P$ in $\B$, we have
  $\pd_{\B}H(P)\leq m-1$ where the functor $H\colon \B \lxr \B$ is
  given in \eqref{endofunctorexactseq}.
\end{enumerate}
If $m=1$, then the above conditions are equivalent to the following one.
\begin{enumerate}[\rm(i)]
\item[\rm(iv)] The functor $\mi\colon \A\lxr \B$ is a homological
  embedding and the quotient functor $\me\colon \B\lxr \C$ preserves
  projective objects.
\end{enumerate}
\begin{proof}
  The equivalence of statements (i) and (ii) follows from the
  isomorphism
  $\Ext^n_{\B}(\mi(P),\Sigma^m(B))\simeq \Ext^{n+m}_{\B}(\mi(P),B)$ for
  all $n\geq 1$, $P$ in $\Proj\B$ and $B$ in $\B$.

  (ii)$\Longrightarrow$(iii) Let $B$ be an object in $\B$ and consider
  an injective coresolution in $\B\colon$
\[
 \xymatrix{
  0 \ar[r]^{ } & B \ar[r]^{ } & I^0 \ar[r]^{} &
  \cdots  \ar[r]^{} & I^{m-1} \ar[r] & \Sigma^m(B) \ar[r] & 0  }
\] 
Let $P$ be a projective object in $\B$. From our assumption and by
dimension shift we have
$\Ext^1_{\B}(\mi\mq(P),\Sigma^m(B))\simeq
\Ext^m_{\B}(\mi\mq(P),\Sigma(B))\simeq
\Ext^{m+1}_{\B}(\mi\mq(P),B)=0$. From $(\ref{endofunctorexactseq})$ there is an exact sequence 
\begin{equation}
\label{endofunctorexactseqprojective}
\xymatrix{
  0 \ar[r]^{ } & H(P) \ar[r]^{ } &
  P  \ar[r]^{ } & \mi\mq(P) \ar[r] & 0  }
\end{equation}
Applying the functor
$\Hom_{\B}(-,B)$ we get the long exact sequence:
\[
 \xymatrix@C=0.4cm{
  \cdots \ar[r]^{ } & \Ext^m_{\B}(P,B) \ar[r] & \Ext^m_{\B}(H(P),B) \ar[r]^{ } & \Ext^{m+1}_{\B}(\mi\mq(P),B) \ar[r]^{} & \Ext^{m+1}_{\B}(P,B)  \ar[r]^{} & \cdots   }
\] 
and therefore $\Ext^m_{\B}(H(P),B)=0$. We infer that $\pd_{\B}H
(P)\leq m-1$.

(iii)$\Longrightarrow$(ii) Let $B$ be an object in $\B$ and consider
an injective coresolution as above. From \cite[Remark 2.5]{Psaroud} we
have $\Proj\A=\add \mq(\Proj{\B})$, so it suffices to show that
$\Ext^n_{\B}(\mi\mq(P), \Sigma^m(B))=0$ for all $n\geq 1$ and any
object $P$ in $\Proj{\B}$. From the exact sequence $(\ref{endofunctorexactseqprojective})$ and our
assumption it follows that $\pd_{\B}\mi\mq(P)\leq m$. Then the result
follows easily as above by dimension shift.

Assume that $m=1$. We show (iii)$\Longrightarrow$(iv). Let $P$ be a
projective object in $\B$ and let $A$ be an object in $\A$. Consider
the exact sequence \eqref{endofunctorexactseqprojective} and apply the
functor $\Hom_{\B}(-,\mi(A))$.  Since $H(P)$ is projective, the long exact
sequence of homology yields that $\Ext^n_{\B}(\mi\mq(P),\mi(A))=0$ for
all $n\geq 2$. Furthermore, it shows that
$\Ext^1_{\B}(\mi\mq(P),\mi(A))$ is a quotient of
$\Hom_\B(H(P),\mi(A))$, which is isomorphic to
$\Hom_\A(\mq(H(P)),A)$ by the adjunction $(\mq,\mi)$.  Using that $H(P)$ is a quotient of
$\ml\me(P)$, the functor $\mq$ is right exact and $\mq\ml=0$, it
follows that $\mq(H(P)) = 0$.  Hence $\Ext^n_\B(\mi\mq(P),\mi(A)) =
0$ for all $n\geq 1$. Then \cite[Theorem 3.9]{Psaroud} implies that the functor $\mi$ is a homological embedding.

It remains to show that $\me(P)$ lies in $\Proj\C$. From
Proposition~\ref{properties} (iii) we have that the functor
$\me\colon \B\lxr \C$ is essentially surjective. Moreover, since the
composition $\me\mi=0$, see Proposition~\ref{properties} (ii),
if we apply the exact functor $\me$ to the sequence $(\ref{endofunctorexactseqprojective})$ we get that
$\me(H(P))\simeq \me(P)$. Thus, it suffices to show that
${\Ext}^{n}_{\C}(\me(H(P)),\me(Y))=0$ for any $n\geq 1$ and $Y$ in
$\B$. Since the functor $\mi$ is a homological embedding, it follows
from \cite[Corollary 3.11]{Psaroud} that there is an isomorphism
\begin{equation}
\label{extisomF(P)}
 \xymatrix{
  {\Ext}^n_{\B}(H(P),Y) \ar[r]^{\simeq \ \ \ \ } & {\Ext}^{n}_{\C}(\me(H(P)),\me(Y)) }
\end{equation}
for any $n\geq 1$. Since by (iii), the object $H(P)$ is projective it follows from the isomorphism $(\ref{extisomF(P)})$ that the object $\me(P)$ is projective.  

Finally, the implication (iv)$\Longrightarrow$(iii) follows immediately from the isomorphism $(\ref{extisomF(P)})$ using again that $\me(H(P))\simeq \me(P)$.
\end{proof}
\end{prop}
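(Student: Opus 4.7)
My plan is to anchor the proof on the short exact sequence $0 \to H(P) \to P \to \mi\mq(P) \to 0$ from \eqref{endofunctorexactseq} applied to projective $P \in \B$, together with standard dimension shifting. Throughout I take $m \geq 1$.

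For (i) $\Leftrightarrow$ (ii), I would use the dimension-shift isomorphism $\Ext^{n+m}_\B(\mi(P), B) \simeq \Ext^n_\B(\mi(P), \Sigma^m(B))$ for $n \geq 1$, $P \in \Proj\A$, $B \in \B$, which translates the uniform bound $\pd_\B \mi(P) \leq m$ directly into the Ext-vanishing defining $\Sigma^m(\B) \subseteq \mi(\Proj\A)^{{_1\bot_\infty}}$.

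For (ii) $\Rightarrow$ (iii), let $P \in \Proj\B$. Since $\mq$ preserves projectives (Proposition \ref{properties}(v)), $\mi\mq(P) \in \mi(\Proj\A)$ satisfies $\pd_\B \mi\mq(P) \leq m$ by (ii). Applying $\Hom_\B(-,B)$ to the defining sequence of $H$ and using $\Ext^m_\B(P,B) = 0$ (projectivity of $P$ and $m \geq 1$), the long exact sequence embeds $\Ext^m_\B(H(P),B)$ into $\Ext^{m+1}_\B(\mi\mq(P),B) = 0$, giving $\pd_\B H(P) \leq m-1$. For (iii) $\Rightarrow$ (ii), the standard recollement identity $\Proj\A = \add\mq(\Proj\B)$ (which follows from $\mq\mi \simeq \iden_\A$ and splitting of any epi from a projective of $\B$ onto $\mi(P)$) reduces the claim to Ext-vanishing against $\mi\mq(P)$ with $P \in \Proj\B$; the defining sequence combined with (iii) yields $\pd_\B \mi\mq(P) \leq m$, and a final dimension shift recovers (ii).

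For the $m = 1$ case, I would deduce (iv) from (iii) via an Ext-vanishing criterion for homological embeddings: it suffices to check $\Ext^n_\B(\mi\mq(P), \mi(A)) = 0$ for all $n \geq 1$, $P \in \Proj\B$, $A \in \A$. Applying $\Hom_\B(-, \mi(A))$ to the defining sequence of $H$, projectivity of $P$ together with projectivity of $H(P)$ from (iii) kills the Ext in degrees $n \geq 2$; in degree $1$ the group is a quotient of $\Hom_\B(H(P), \mi(A)) \simeq \Hom_\A(\mq H(P), A)$ by the $(\mq, \mi)$-adjunction. To finish I would open up the four-term sequence \eqref{Bsequence}, which realizes $H(P)$ as the image of $\ml\me(P) \to P$ and hence as a quotient of $\ml\me(P)$; right-exactness of $\mq$ together with $\mq\ml = 0$ (Proposition \ref{properties}(ii)) then forces $\mq H(P) = 0$. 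For $\me$ preserving projectives, $\me\mi = 0$ applied to the defining sequence gives $\me(H(P)) \simeq \me(P)$, and the homological-embedding isomorphism $\Ext^n_\B(X, Y) \simeq \Ext^n_\C(\me X, \me Y)$ (for $n \geq 1$) transports projectivity of $H(P)$ to $\me(P)$. The reverse (iv) $\Rightarrow$ (iii) reads the same isomorphism backwards: $\me(H(P)) \simeq \me(P)$ is projective by (iv), so $\Ext^n_\B(H(P), Y) = 0$ for all $n \geq 1$, whence $H(P) \in \Proj\B$. The main obstacle I foresee is exactly the step $\mq H(P) = 0$: it is the only place the argument cannot be closed by formal manipulation of the two-term sequence defining $H$, and one must unpack the four-term sequence \eqref{Bsequence} of the recollement.
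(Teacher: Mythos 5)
Your proof follows the same route as the paper's: the equivalence (i)$\Leftrightarrow$(ii) by dimension shift along an injective coresolution, (ii)$\Leftrightarrow$(iii) by applying $\Hom_\B(-,B)$ to the short exact sequence $0 \to H(P) \to P \to \mi\mq(P) \to 0$ (using $\Proj\A = \add\mq(\Proj\B)$ for the return direction), and for $m=1$ establishing $\mq H(P)=0$ via the four-term sequence \eqref{Bsequence}, right-exactness of $\mq$, and $\mq\ml=0$, then transporting projectivity through the homological-embedding isomorphism. The argument is correct and matches the paper's in every essential step, including the identification of $\mq H(P)=0$ as the crux.
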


Let $\A$ be an abelian category with enough projective objects. Let
$A$ be an object in $\A$ and $\cdots \lxr P_1 \lxr P_0 \lxr A\lxr 0$ a
projective resolution of $A$ in $\A$. The kernel of the morphism
$P_{n-1}\lxr P_{n-2}$ is an $n$-th syzygy of $A$ and is denoted by
$\Omega^n(A)$.  Also, if $\X$ is a class of objects in $\A$, then we
denote by
${^{\bot}\X}=\{A\in \A \mid \Hom_{\A}(A,\X)=0\}$ the left
  orthogonal subcategory of $\X$.

We now recall some basic facts about projective covers in the setting of abelian categories. We refer to \cite{Krause:projectivecovers} for more details. Let $X$ be an object in an abelian category $\A$. The radical of $X$, denoted by $\mathsf{rad}X$, is the intersection of all its maximal subobjects. Clearly, if $S$ is a simple object then $\mathsf{rad}S=0$. Recall also that an epimorphism $f\colon P\lxr X$ is a projective cover, if $P$ is projective and the map $f$ is an essential epimorphism. In this case, the kernel $\Ker{f}$ is contained in the radical of $P$, i.e. $\Ker{f}\subseteq \mathsf{rad}P$. Moreover, given a morphism $g\colon X\lxr Y$ in $\A$ we always have $g(\mathsf{rad}X)\subseteq \mathsf{rad}Y$.

\begin{lem}
\label{lemisomorphismsimpleobject}
Let $\A$ be an abelian category with projective covers and let $S$
be a simple object. Then for every $n\geq 1$ there is an isomorphism
\[
{\Ext}^{n}_{\A}(X, S) \simeq \Hom_{\A}(\Omega^n(X), S)
\] 
\begin{proof}
Let $X$ be an object in $\A$. Consider a projective resolution of $X$ by taking projective covers
\begin{equation}
\label{projcoversresol}
 \xymatrix{
  \cdots \ar[r]^{ } & P_2 \ar[r]^{d_2} & P_1 \ar[r]^{d_1} &
  P_0  \ar[r]^{d_0} & X \ar[r] & 0  }
\end{equation}
This means that the map $d_0$ is a projective cover, the map
$P_1\lxr \Omega(X)$ is a projective cover and so on. Applying the
functor $\Hom_{\A}(-,S)$ we claim that the following complex
\begin{equation}
\label{homcomplexzerodifferential}
 \xymatrix{
  0 \ar[r]^{ } & \Hom_{\A}(P_0,S) \ar[r]^{(d_1)_*} & \Hom_{\A}(P_1,S) \ar[r]^{(d_2)_*} & \Hom_{\A}(P_2,S)  \ar[r]^{} & \cdots  }
\end{equation}
has zero differentials. By the construction of the resolution
$(\ref{projcoversresol})$ we have that
$\Image{d_{n}}=\Omega^n(X)\subseteq \mathsf{rad}P_{n-1}$ for all
$n\geq 1$. Let $f$ be a map in $\Hom_{\A}(P_n,S)$. Then
$(d_{n+1})_*(f)=f  d_{n+1}$ and we compute that
\[
\Image((d_{n+1})_*(f)) = f(\Image{d_{n+1}}) \subseteq
f(\mathsf{rad}P_n)\subseteq \mathsf{rad}f(P_n)\subseteq \mathsf{rad}S=0
\]
This implies that the complex $(\ref{homcomplexzerodifferential})$ has
zero differential and therefore we get that
\[
{\Ext}^{n}_{\A}(X, S) \simeq \Hom_{\A}(P_n, S)
\] 
Consider now the exact sequence $P_{n+1}\lxr P_n\lxr \Omega^n(X)\lxr 0$. Applying the functor $\Hom_{\A}(-,S)$ we obtain the exact sequence
\[
 \xymatrix{
  0 \ar[r]^{ } & \Hom_{\A}(\Omega^n(X),S) \ar[r]^{} & \Hom_{\A}(P_n,S) \ar[rr]^{0=(d_n)_*} && \Hom_{\A}(P_{n+1},S)   }
\]
and thus an isomorphism $\Hom_{\A}(\Omega^n(X),S)\simeq \Hom_{\A}(P_n,S)$.
This completes the proof of the desired isomorphism.
\end{proof}
\end{lem}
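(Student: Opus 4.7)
The plan is to compute $\Ext^n_{\A}(X,S)$ from a \emph{minimal} projective resolution of $X$, namely one constructed by iterated projective covers, and to exploit the fact that morphisms into a simple object factor through the top.

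First I would choose a projective resolution
\[
\xymatrix{\cdots \ar[r] & P_2 \ar[r]^{d_2} & P_1 \ar[r]^{d_1} & P_0 \ar[r]^{d_0} & X \ar[r] & 0}
\]
in which $d_0\colon P_0\lxr X$ is a projective cover and, inductively, each induced morphism $P_{n}\lxr \Omega^{n}(X)$ is a projective cover. Such a resolution exists by hypothesis. The crucial feature is that $\Ker(P_n\lxr \Omega^n(X)) = \Omega^{n+1}(X) = \Image(d_{n+1})$, and by the defining property of a projective cover this image sits inside $\mathsf{rad}\,P_n$.

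Next I would show that the complex $\Hom_{\A}(P_{\bullet},S)$ obtained by applying $\Hom_{\A}(-,S)$ has \emph{zero} differentials. For any $f\in \Hom_{\A}(P_n,S)$, using that morphisms send radicals into radicals and that $\mathsf{rad}\,S=0$ since $S$ is simple, one has
\[
\Image\bigl((d_{n+1})^{*}(f)\bigr) = f(\Image d_{n+1})\subseteq f(\mathsf{rad}\,P_n)\subseteq \mathsf{rad}\,f(P_n)\subseteq \mathsf{rad}\,S = 0,
\]
so $(d_{n+1})^{*}=0$. Consequently $\Ext^{n}_{\A}(X,S)\simeq \Hom_{\A}(P_n,S)$ for every $n\geq 1$.

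Finally I would identify $\Hom_{\A}(P_n,S)$ with $\Hom_{\A}(\Omega^n(X),S)$. The exact sequence
\[
\xymatrix{P_{n+1}\ar[r]^{d_{n+1}} & P_n\ar[r] & \Omega^n(X)\ar[r] & 0}
\]
becomes, after applying the left exact functor $\Hom_{\A}(-,S)$, the exact sequence
\[
\xymatrix{0\ar[r] & \Hom_{\A}(\Omega^n(X),S)\ar[r] & \Hom_{\A}(P_n,S)\ar[r]^{(d_{n+1})^{*}=0} & \Hom_{\A}(P_{n+1},S),}
\]
yielding the desired isomorphism $\Hom_{\A}(\Omega^n(X),S)\simeq \Hom_{\A}(P_n,S)$. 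Composing the two gives the claimed identification $\Ext^n_{\A}(X,S)\simeq \Hom_{\A}(\Omega^n(X),S)$. The main thing to be careful about is the radical inequality $f(\mathsf{rad}\,P_n)\subseteq \mathsf{rad}\,f(P_n)$ in a general abelian category with projective covers; this however is a standard consequence of the definition of $\mathsf{rad}$ as the intersection of maximal subobjects, and is exactly what the proof needs to run.
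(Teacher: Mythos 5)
Your proof is essentially identical to the paper's: both build a minimal projective resolution by iterated projective covers, observe that $\Image d_{n+1}\subseteq \mathsf{rad}\,P_n$ forces all differentials in $\Hom_{\A}(P_\bullet,S)$ to vanish via the chain $f(\Image d_{n+1})\subseteq f(\mathsf{rad}\,P_n)\subseteq\mathsf{rad}\,S=0$, and then identify $\Hom_{\A}(P_n,S)$ with $\Hom_{\A}(\Omega^n(X),S)$ by left-exactness of $\Hom_{\A}(-,S)$ applied to $P_{n+1}\lxr P_n\lxr\Omega^n(X)\lxr 0$. The only (trivial) discrepancy is that you correctly label the vanishing map in the last displayed sequence as the one induced by $d_{n+1}$, whereas the paper's display has a typo labeling it $(d_n)_*$.
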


In the rest of this section we are interested in the eventual homological behaviour of the category $\B$ is a recollement $(\A,\B,\C)$. To this end the following result from \cite{PSS} is useful.

\begin{lem}\textnormal{(\!\!\cite[Theorem 3.4]{PSS})}
\label{lemmaThmPSS}  
Let $(\A, \B, \C)$ be a recollement of abelian categories with enough
projectives. The following statements are equivalent:
\begin{enumerate}[\rm(i)]
\item The object $B$ has a projective resolution of the form
\[
\xymatrix{
\cdots \ar[r] &
\ml(Q_1) \ar[r] &
\ml(Q_0) \ar[r] &
P_{n-1} \ar[r] &
\cdots \ar[r] &
P_0 \ar[r] &
B \ar[r] &
0
} 
\]
where each $Q_j$ is a projective object in $\C$.

\item $\Ext_{\B}^j(B,\mi(A))=0$ for every $A \in \A$ and $j > n$,
and there exists an $n$-th syzygy of $B$ lying in ${^{\bot}\mi(\A)}$.
\end{enumerate}
\end{lem}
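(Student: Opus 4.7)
\medskip

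\noindent\textbf{Plan of proof.}

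For the direction (i)$\Rightarrow$(ii), the plan is to use the given resolution directly. Write the resolution as $P_\bullet \to B \to 0$ where $P_{n+k} = \ml(Q_k)$ for $k\geq 0$. To see the Ext vanishing, I would apply $\Hom_\B(-,\mi(A))$ to the tail $\cdots \to \ml(Q_1)\to \ml(Q_0)$ and note that, by the adjunction $(\ml,\me)$ together with $\me\mi = 0$ from Proposition~\ref{properties}(ii), each term becomes $\Hom_\C(Q_k,\me\mi(A)) = 0$. Hence the complex computing $\Ext^j_\B(B,\mi(A))$ for $j>n$ is identically zero. For the syzygy condition, let $\Omega^n(B)$ be the image of $\ml(Q_0)\to P_{n-1}$ in this resolution, so that there is an epimorphism $\ml(Q_0)\twoheadrightarrow \Omega^n(B)$. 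Any morphism $\Omega^n(B)\to\mi(A)$ precomposed with this epimorphism yields a morphism $\ml(Q_0)\to\mi(A)$, which is again zero by the same adjunction argument; hence $\Omega^n(B)\in{^{\bot}\mi(\A)}$.

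For the direction (ii)$\Rightarrow$(i), I would build the resolution in two stages. Fix an $n$-th syzygy $K = \Omega^n(B)$ coming from projective objects $P_0,\ldots,P_{n-1}$ such that $K\in{^{\bot}\mi(\A)}$; such a choice exists by hypothesis. The first observation is that, via the adjunction $(\mq,\mi)$, the condition $K\in{^{\bot}\mi(\A)}$ is equivalent to $\mq(K) = 0$. Feeding this into the four term exact sequence from Proposition~\ref{properties}(vii) applied to $K$ collapses it to a short exact sequence
\[
0 \lxr \mi(A_0) \lxr \ml\me(K) \xr{\mu_K} K \lxr 0,
\]
so the counit $\mu_K$ is an epimorphism. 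Now choose a projective object $Q_0$ in $\C$ together with an epimorphism $Q_0\twoheadrightarrow \me(K)$; applying the right exact functor $\ml$ and composing with $\mu_K$ yields an epimorphism $\pi_0\colon \ml(Q_0)\twoheadrightarrow K$, where $\ml(Q_0)$ is projective in $\B$ by Proposition~\ref{properties}(v).

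The key inductive step is to show that $K_1 := \ker \pi_0$ satisfies the \emph{same} hypotheses as $K$, namely $K_1\in{^{\bot}\mi(\A)}$ and $\Ext^j_\B(K_1,\mi(A)) = 0$ for all $j\geq 1$ and all $A\in\A$. Applying $\Hom_\B(-,\mi(A))$ to $0\to K_1\to \ml(Q_0)\to K\to 0$ and using $\Hom_\B(\ml(Q_0),\mi(A)) = \Hom_\C(Q_0,\me\mi(A)) = 0$ together with $\Ext^{\geq 1}_\B(\ml(Q_0),\mi(A)) = 0$ (since $\ml(Q_0)$ is projective), the long exact sequence gives
\[
\Hom_\B(K_1,\mi(A)) \iso \Ext^1_\B(K,\mi(A)), \qquad \Ext^j_\B(K_1,\mi(A)) \iso \Ext^{j+1}_\B(K,\mi(A))
\]
for $j\geq 1$. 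By dimension shift, $\Ext^{j}_\B(K,\mi(A)) = \Ext^{n+j}_\B(B,\mi(A))$, which vanishes for all $j\geq 1$ by hypothesis~(ii). Hence $K_1$ enjoys the same properties as $K$, and iterating the construction produces the desired projective resolution $\cdots\to \ml(Q_1)\to \ml(Q_0)\to P_{n-1}\to\cdots\to P_0\to B\to 0$.

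The main conceptual step is the inductive passage, which hinges on the observation that the class of objects with $\Hom_\B(-,\mi(\A)) = 0 = \Ext^{\geq 1}_\B(-,\mi(\A))$ is closed under taking kernels of epimorphisms from objects of the form $\ml(Q)$ with $Q$ projective; the routine but essential ingredients are the adjunction identity $\me\mi = 0$ (Proposition~\ref{properties}(ii)) and the fact that $\ml$ preserves projectives (Proposition~\ref{properties}(v)).
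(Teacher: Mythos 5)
The paper itself does not supply a proof of this lemma; it cites \cite[Theorem 3.4]{PSS} and moves on. Your argument is a correct reconstruction of the expected proof. Both directions are sound: the forward direction uses the adjunction $\Hom_\B(\ml(Q),\mi(A))\simeq\Hom_\C(Q,\me\mi(A))=0$ to kill the relevant Hom and Ext groups; the converse correctly translates $\Omega^n(B)\in{^\bot}\mi(\A)$ into $\mq(\Omega^n(B))=0$ via the adjunction $(\mq,\mi)$, collapses the four-term sequence of Proposition~\ref{properties}(vii) to conclude that the counit $\mu_{\Omega^n(B)}$ is an epimorphism, lifts a projective cover in $\C$ through $\ml$, and verifies by dimension shift that the kernel inherits the same vanishing, so the construction iterates. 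This is essentially the argument behind \cite[Theorem~3.4]{PSS}, specialized to the two-condition equivalence quoted here.
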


Our aim is to provide a bound for the finitistic projective dimension
of an abelian category $\B$ in a recollement situation $(\A,\B,\C)$ when the $\A$-relative injective global
dimension ${\igld}_{\B}{\A}$ of $\B$ is finite.

Let $(\A, \B, \C)$ be a recollement of abelian categories. Let $B$ be
an object in $\B$ and $I$ an object in $\Inj\A$. Then by dimension
shift we have the isomorphism
$\Ext^n_{\B}(\Omega^m(B),\mi(I))\simeq \Ext_{\B}^{n+m}(B,\mi(I))$ for every
$n\geq 1$. This implies that ${\igld}_{\B}{\A}=t$ if and only if
$\Omega^t(\B)\subseteq {^{{_1\bot_\infty}}\mi(\Inj\A)}$. The latter is
equivalent to $\Omega^t(\B)\subseteq {^{{_1\bot_\infty}}\mi(\A)}$ by
\cite[Theorem 3.4]{PSS}. Under
the finiteness of the $\A$-relative injective global dimension of
$\B$, we show in the next result that each object admits a truncated
projective resolution after some step with projectives coming from the
quotient category $\C$ via the section functor $\ml$.

\begin{lem}
\label{lemtruncatedprojresol}
Let $(\A,\B,\C)$ be a recollement of abelian categories such that
$\A$ is a finite length category. Assume that ${\igld}_{\B}{\A}=t<\infty$
and that $\B$ has projective covers. Then any object of $\B$ has a
projective resolution as follows:
\[
\xymatrix{
\cdots \ar[r] &
\ml(Q_1) \ar[r] &
\ml(Q_0) \ar[r] &
P_{t} \ar[r] &
\cdots \ar[r] &
P_0 \ar[r] &
B \ar[r] &
0
} 
\]
where each $Q_j$ belongs to $\Proj\C$.
\begin{proof}
  From Lemma~\ref{lemmaThmPSS} it suffices to show that
  $\Ext^j_{\B}(B,\mi(A))=0$ for all $A$ in $\A$ and $j>t+1$, and there
  is an $(t+1)$-syzygy of $B$ lying in ${^{\bot}\mi(\A)}$. Since
  $\id_{\B}\mi(A)\leq t$, it follows that $\Ext^j_{\B}(B,\mi(A))=0$
  for all $j>t$. Since every object in $\A$ is filtered in finitely
  many steps by simple objects, it follows from
  Lemma~\ref{lemisomorphismsimpleobject} that
  $\Hom_{\B}(\Omega^{t+1}(B),\mi(A))$=0 for all $A$ in $\A$. We infer
  that $\Omega^{t+1}(B)$ lies in ${^{\bot}\mi(\A)}$ and this completes
  the proof.
\end{proof}
\end{lem}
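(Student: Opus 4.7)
The plan is to invoke Lemma~\ref{lemmaThmPSS} with $n=t+1$, so I must verify two conditions: (a) $\Ext^{j}_{\B}(B,\mi(A))=0$ for every $A\in\A$ and every $j>t+1$, and (b) some $(t+1)$-th syzygy of $B$ lies in ${^{\bot}\mi(\A)}$. Condition (a) is immediate from the definition of $\igld_{\B}\A$: since $\id_{\B}\mi(A)\leq t$ for every $A\in\A$, we have $\Ext^{j}_{\B}(B,\mi(A))=0$ whenever $j>t$, which is stronger than what is needed.

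For condition (b), fix a projective resolution of $B$ by projective covers (available because $\B$ has them) and let $\Omega^{t+1}(B)$ denote the corresponding $(t+1)$-th syzygy. I must show $\Hom_{\B}(\Omega^{t+1}(B),\mi(A))=0$ for every $A\in\A$. My first step is the simple case: I claim that for a simple $S\in\A$ the object $\mi(S)$ is simple in $\B$. Indeed, by Proposition~\ref{properties}(vi) the subcategory $\mi(\A)$ is Serre in $\B$, so any nonzero subobject of $\mi(S)$ is of the form $\mi(T)$ with $0\neq T\subseteq S$; simplicity of $S$ and faithfulness of $\mi$ then force $\mi(T)=\mi(S)$. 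Since $\B$ has projective covers, Lemma~\ref{lemisomorphismsimpleobject} applied inside $\B$ to the simple $\mi(S)$ yields
\[
\Hom_{\B}(\Omega^{t+1}(B),\mi(S))\;\simeq\;\Ext^{t+1}_{\B}(B,\mi(S))=0,
\]
the vanishing coming again from $\id_{\B}\mi(S)\leq t$. To pass from simples to all of $\A$, I use that $\A$ is finite length and argue by induction on composition length: given a short exact sequence $0\to S\to A\to A'\to 0$ with $S$ simple and $A'$ shorter, applying the exact functor $\mi$ followed by the left exact functor $\Hom_{\B}(\Omega^{t+1}(B),-)$ produces a left-exact sequence whose outer terms vanish by the base case and the induction hypothesis, hence so does the middle.

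Combining (a) and (b), Lemma~\ref{lemmaThmPSS} produces the required projective resolution, with the tail from degree $t+1$ onward of the form $\ml(Q_j)$ for $Q_j\in\Proj\C$. The only delicate point of the argument is identifying $\mi(S)$ as a simple object of $\B$ so that Lemma~\ref{lemisomorphismsimpleobject} can be invoked there; once that is noted, the remainder is a standard dévissage by composition length enabled by the hypothesis that $\A$ is of finite length.
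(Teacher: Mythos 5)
Your argument is correct and follows exactly the paper's route: invoke Lemma~\ref{lemmaThmPSS} with $n=t+1$, get condition (a) from $\id_{\B}\mi(A)\leq t$, and get condition (b) via Lemma~\ref{lemisomorphismsimpleobject} plus finiteness of composition length in $\A$. The paper compresses step (b) into a single sentence; you have carefully spelled out the two points it silently uses, namely that $\mi(S)$ is simple in $\B$ (which the paper does state explicitly in the subsequent proof of Proposition~\ref{prop:algebra-ext-iso}) and the left-exactness dévissage along a composition series, so your version is the same proof with those gaps filled.
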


We start by recalling from \cite{PSS} when a functor is an eventually
homological isomorphism. Note that we define the latter in the context
of abelian categories.

\begin{defn}
\label{defnevenhomiso}
Let $E\colon \B\lxr \C$ be a functor between abelian categories. The
functor $E$ is called an {\bf eventually homological isomorphism} if
there is a positive integer $t$ and a group isomorphism
\[
{\Ext}^n_{\B}(B,B')\simeq {\Ext}^n_{\C}(E(B),E(B'))
\]
for every $n>t$ and for all objects $B$, $B'$ in $\B$.  For the
minimal such $t$ it is called a $t$-eventually homological isomorphism.  
\end{defn}

Note that we do not require these isomorphisms to be induced by the
functor $E$, see Remark~\ref{remdefevenhomisom} below. We continue
with the next result from \cite{PSS} where a characterization is
provided for when the functor $\me$ in a recollement $(\A,\B,\C)$
induces isomorphisms of extension groups in almost all degrees.

\begin{thm}\textnormal{(\!\!\!\cite[Theorem 3.4]{PSS})}
\label{thm:ext-iso-proj}
Let $(\A,\B,\C)$ be a recollement of abelian categories and assume
that $\B$ and $\C$ have enough projective and injective objects.
Consider the following statements for an object $B$ of $\B$ and two
integers $n$ and $m$:
\begin{enumerate}[\rm(i)]
\item The map $\me_{B,B'}^{j} \colon \Ext_{\B}^{j}(B,B') \lxr
\Ext_{\C}^{j}(\me(B), \me(B'))$ is an isomorphism for every object $B'$ in
$\B$ and every integer $j > m + n$.

\item The object $B$ has a projective resolution of the form
\[
\ \ \ \ \  \xymatrix@C=0.7cm{
\cdots \ar[r] &
\ml(Q_1) \ar[r] &
\ml(Q_0) \ar[r] &
P_{n-1} \ar[r] &
\cdots \ar[r] &
P_0 \ar[r] &
B \ar[r] &
0
} 
\]
where each $Q_j$ lies in $\Proj\C$.

\item $\Ext_{\B}^j(B, \mi(A))=0$ for every $A$ in $\A$ and $j > n$,
  and there exists an $n$-th syzygy of $B$ lying in the left
  orthogonal subcategory ${^{\bot}\mi(\A)}$.

\item $\Ext_{\B}^j(B, \mi(I))=0$ for every $I$ in $\Inj{\A}$ and
  $j > n$, and and there exists an $n$-th syzygy of $B$ lying in the
  left orthogonal subcategory ${^{\bot}\mi(\Inj{\A})}$.
\end{enumerate}
Then the following statements are equivalent:$\textup{(ii)}
\Longleftrightarrow \textup{(iii)} \Longleftrightarrow
\textup{(iv)}$. If one of these holds and in addition we have
$\pd_{\C} \me(P) \le m$ for every projective object $P$ in
$\B$, then statement $\textup{(i)}$ holds.
\end{thm}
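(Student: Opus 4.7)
The plan is to first establish the equivalences $\textup{(ii)}\Longleftrightarrow\textup{(iii)}\Longleftrightarrow\textup{(iv)}$ and then derive $\textup{(i)}$ from $\textup{(ii)}$ together with the hypothesis on projective dimensions. The unifying tool, used throughout, is the observation that for every $Q\in\Proj\C$ and $A\in\A$ the adjunction $(\ml,\me)$ combined with $\me\mi=0$ (Proposition~\ref{properties}(ii)) gives $\Hom_\B(\ml(Q),\mi(A))\cong\Hom_\C(Q,\me\mi(A))=0$, and since $\ml$ preserves projectives by Proposition~\ref{properties}(v) we obtain $\Ext^k_\B(\ml(Q),\mi(A))=0$ for every $k\geq 0$.

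For $\textup{(ii)}\Longrightarrow\textup{(iii)}$, the prescribed resolution realises the $n$-th syzygy $\Omega^n(B)$ as a quotient of $\ml(Q_0)$ with $\ml$-resolution continuing to the left; applying $\Hom_\B(-,\mi(A))$ to this tail together with the vanishing above yields both $\Ext^j_\B(B,\mi(A))=0$ for $j>n$ and $\Omega^n(B)\in {^{\bot}\mi(\A)}$. The implication $\textup{(iii)}\Longrightarrow\textup{(iv)}$ is immediate from $\Inj\A\subseteq\A$. For the upgrade $\textup{(iv)}\Longrightarrow\textup{(iii)}$, given $A\in\A$ I embed $A\hookrightarrow I$ with $I\in\Inj\A$; exactness of $\mi$ yields $\mi(A)\hookrightarrow\mi(I)$, so $\Hom_\B(\Omega^n(B),\mi(A))=0$ follows at once. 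The higher $\Ext$ vanishing is obtained by dimension shifting: writing $X=\Omega^n(B)$, we have $\Ext^j_\B(X,\mi(I))=\Ext^{n+j}_\B(B,\mi(I))=0$ for every $j\geq 0$ and $I\in\Inj\A$, and the long exact sequences associated with $0\to\mi(A)\to\mi(I)\to\mi(I/A)\to 0$ give inductively $\Ext^j_\B(X,\mi(A))=0$ for every $j\geq 0$ and $A\in\A$, hence $\Ext^k_\B(B,\mi(A))=0$ for every $k>n$.

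The most delicate step is $\textup{(iii)}\Longrightarrow\textup{(ii)}$. Fix any projective resolution of $B$ with $n$-th syzygy $X=\Omega^n(B)\in {^{\bot}\mi(\A)}$. The adjunction $(\mq,\mi)$ forces $\Hom_\A(\mq(X),A)\cong\Hom_\B(X,\mi(A))=0$ for every $A\in\A$, and setting $A=\mq(X)$ yields $\mq(X)=0$. The four-term sequence~\eqref{Bsequence} for $X$ thus collapses to $0\to\mi(A')\to\ml\me(X)\to X\to 0$, so $\mu_X\colon\ml\me(X)\twoheadrightarrow X$ is surjective. Choosing a projective cover $Q_0\twoheadrightarrow\me(X)$ in $\C$ and using right-exactness of $\ml$ produces a surjection $\ml(Q_0)\twoheadrightarrow X$ with kernel $K_1$. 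The long exact sequence of $\Hom_\B(-,\mi(A))$ applied to $0\to K_1\to\ml(Q_0)\to X\to 0$, together with $\Ext^k_\B(\ml(Q_0),\mi(A))=0$ for every $k\geq 0$, identifies $\Ext^j_\B(K_1,\mi(A))\cong\Ext^{j+1}_\B(X,\mi(A))=0$ for every $j\geq 0$. Hence $K_1$ inherits the same orthogonality properties as $X$, and iterating the construction produces the desired tail $\cdots\to\ml(Q_1)\to\ml(Q_0)$.

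For the conditional implication, assume $\textup{(ii)}$ together with $\pd_\C\me(P)\leq m$ for every $P\in\Proj\B$. Applying $\Hom_\B(-,B')$ to the resolution from $\textup{(ii)}$ and using $\Hom_\B(\ml(Q_k),B')\cong\Hom_\C(Q_k,\me(B'))$ gives $\Ext^j_\B(B,B')\cong H^{j-n}(\Hom_\C(Q_\bullet,\me(B')))$ for every $j\geq n+1$. On the $\C$-side, the exact functor $\me$ combined with $\me\ml\simeq\iden_\C$ (Proposition~\ref{properties}(iv)) turns the resolution into an acyclic resolution of $\me(B)$ whose terms in degrees $<n$ are the $\me(P_k)$, each of projective dimension at most $m$, and whose terms in degrees $\geq n$ are the projective objects $Q_{k-n}$. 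Taking a Cartan--Eilenberg resolution of this complex produces a projective resolution of $\me(B)$ whose non-trivial ``column contributions'' are confined to total degrees below $n+m+1$; hence for every $j>m+n$ the group $\Ext^j_\C(\me(B),\me(B'))$ is also computed by $H^{j-n}(\Hom_\C(Q_\bullet,\me(B')))$. The principal technical hurdle is to verify that the two identifications are compatible with the natural map $\me^j_{B,B'}$, which requires careful tracking of chain maps through the Cartan--Eilenberg construction; once this is done, the isomorphism in $\textup{(i)}$ follows for all $j>m+n$.
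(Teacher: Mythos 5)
The paper does not give a proof of this theorem; it cites it verbatim from \cite[Theorem 3.4]{PSS}, so there is no in-text proof to match against. Judged on its own, your treatment of the equivalences $\textup{(ii)}\Leftrightarrow\textup{(iii)}\Leftrightarrow\textup{(iv)}$ is sound and proceeds along the standard lines, once two small slips are repaired: in $\textup{(iii)}\Rightarrow\textup{(ii)}$ you invoke a \emph{projective cover} of $\me(X)$, but the hypotheses only grant enough projectives, not projective covers --- any projective presentation suffices for the argument; and in $\textup{(iv)}\Rightarrow\textup{(iii)}$ the claimed identity $\Ext^{0}_{\B}(X,\mi(I))=\Ext^{n}_{\B}(B,\mi(I))$ for the $n$-th syzygy $X$ is not an equality in general (the left side maps \emph{onto} the right), although the needed vanishing $\Hom_{\B}(X,\mi(I))=0$ follows directly from $X\in{^{\bot}\mi(\Inj\A)}$, so the conclusion survives.

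The genuine gap is the final conditional implication. You pass to a Cartan--Eilenberg resolution of the complex $\me$ of the resolution in $\textup{(ii)}$, claim that for $j>m+n$ one also computes $\Ext^{j}_{\C}(\me(B),\me(B'))$ as $H^{j-n}\bigl(\Hom_{\C}(Q_{\bullet},\me(B'))\bigr)$, and then acknowledge that the compatibility of this computation with the natural map $\me^{j}_{B,B'}$ is a ``principal technical hurdle'' that you do not carry out. That compatibility is precisely the content of statement $\textup{(i)}$ --- an abstract isomorphism of the two groups is not what the theorem asserts --- and it is not routine through the CE machinery, since $\me$ of a projective resolution in $\B$ is not a projective resolution in $\C$ and $\me^{j}$ acts on both arguments. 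A far lighter route avoids the issue: apply $\me$ to the given resolution to get the exact $\C$-complex whose terms are $\me(P_0),\dots,\me(P_{n-1}),Q_0,Q_1,\dots$, set $Z_k=\me(\Omega^{k}(B))$, and use the short exact sequences $0\to Z_{k+1}\to\me(P_k)\to Z_k\to 0$ together with $\pd_{\C}\me(P_k)\leq m$ to obtain $\Ext^{i}_{\C}(Z_{k+1},\me(B'))\cong\Ext^{i+1}_{\C}(Z_k,\me(B'))$ for $i>m$; iterating gives $\Ext^{i}_{\C}(\me(\Omega^{n}B),\me(B'))\cong\Ext^{i+n}_{\C}(\me(B),\me(B'))$ for $i>m$. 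On the $\B$-side, the tail $\cdots\to\ml(Q_1)\to\ml(Q_0)\to\Omega^{n}(B)\to0$ together with the adjunction $(\ml,\me)$ identifies $\Ext^{i}_{\B}(\Omega^{n}(B),B')$ with $\Ext^{i}_{\C}(\me(\Omega^{n}B),\me(B'))$, and the compatibility with $\me^{j}$ is now visible degree by degree: the connecting morphisms in the second variable are natural with respect to the exact functor $\me$, and on the tail the adjunction isomorphism is exactly what $\me$ does. You should replace the Cartan--Eilenberg sketch with this dimension-shift argument and verify the naturality there, where it is transparent.
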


In \cite[Corollary 3.12]{PSS} the authors characterized when the
multiplication functor $e(-)\colon \smod\Lambda \lxr \smod{e\Lambda
  e}$ (see Example~\ref{examrecolmodcat}), where
$\Lambda$ is an Artin algebra, is an eventually homological
isomorphism. The following result generalizes \cite[Corollary
3.12]{PSS} from Artin algebras to abelian categories with certain
conditions.

\begin{prop}
\label{prop:algebra-ext-iso}
Let $(\A, \B, \C)$ be a recollement of abelian categories. Assume that
$\A$ is a finite length category and that $\B$ has projective
covers. The following statements are equivalent:
\begin{enumerate}[\rm(i)]
\item There is an integer $t$ such that for every pair of objects $X$
  and $Y$ in $\B$, and every $j > t$, the map
\[
\xymatrix{
\me^j_{X,Y} \colon {\Ext}_{\B}^j(X,Y) \ar[r]^{ \simeq \ } & \Ext^j_{\C}(\me(X),\me(Y)) }
\]
is an isomorphism.
\item The functor $\me\colon \B\lxr \C$ is an eventually homological isomorphism.
\item $(\alpha)$ ${\igld}_{\B}{\A}<\infty$
 and $(\beta)$ $\sup \{ \pd_\C \me(P) \mid P \in \Proj \B \} < \infty$.
\item $(\gamma)$ ${\pgld}_{\B}{\A}<\infty$ and $(\delta)$ $\sup \{ \id_\C \me(I) \mid I \in \Inj \B \} < \infty$.
\end{enumerate}
In particular, if the functor $e$ is a $s$-homological isomorphism,
then each of the dimensions in \textup{(iii)} and \textup{(iv)} are at
most $s$.  The bound $t$ in \textup{(i)} is bounded by the sum of
the dimensions occurring in \textup{(iii)}, and also bounded by the
sum of the dimensions occurring in \textup{(iv)}.
\begin{proof}
The implication (i)$\Longrightarrow$(ii) is clear.  

(ii)$\Longrightarrow$(iii): Let $A$ be an object of $\A$. Then, there
is a positive integer $t$ such that for every object $B$ in $\B$ and
for all $j>t$, we have the isomorphism
\[
{\Ext}_\B^j(B, \mi(A))\simeq {\Ext}_\C^j(\me(B), \me\mi(A))
\]
The latter extension group vanishes since $\me  \mi = 0$ by
Proposition~\ref{properties} (ii). This implies that
$\id_\B \mi(A) \le t$ and therefore $(\alpha)$ holds. For $(\beta)$,
let $P$ be a projective object of $\B$.  For every $C$ in $\C$ and
$j > t$, we have the following isomorphism
\[
{\Ext}_{\C}^j(\me(P), C)\simeq {\Ext}_{\C}^j(\me(P), \me\ml(C))
\simeq {\Ext}_\B^j(P, \ml(C))= 0
\]
since $\me \ml \simeq \iden_{\C}$ by Proposition~\ref{properties}
(iv). We infer that $\pd_\C \me(P) \le t$ and therefore statement
$(\beta)$ holds. Similarly we show that (ii)$\Longrightarrow$(iv).

(iii)$\Longrightarrow$(i): Assume that ${\igld}_{\B}{\A}=m-1<\infty$
and let $S$ be a simple object in $\A$. Since the object $\mi(S)$ is
simple in $\B$ by Lemma \ref{properties} (vi),
Lemma~\ref{lemisomorphismsimpleobject} provide us the 
isomorphism:
\[
{\Ext}_{\B}^j(B, \mi(S))\simeq \Hom_{\B}(\Omega^j(B), \mi(S))
\]
Since $\id_{\B}\mi(S)\leq m-1$, it follows that
$\Hom_{\B}(\Omega^j(B), \mi(S))=0$ for all $j>m-1$. Since every object
$A$ in $\A$ has a finite composition series, it follows that
$\Hom_{\B}(\Omega^j(B), \mi(A))=0$ for all $j>m-1$. So far we have
shown that $\Ext_{\B}^j(B, \mi(A))=0$ for every $A$ in $\A$ and
$j > m$, and there exists an $m$-th syzygy of $B$ lying in
${^{\bot}\mi(\A)}$. Using now assumption $(\beta)$,
Theorem~\ref{thm:ext-iso-proj} implies (i).

Similarly we show that (iv)$\Longrightarrow$(i) and then the four
statements are equivalent.
\end{proof}
\end{prop}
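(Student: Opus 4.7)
The plan is to exploit the symmetry of the four statements and reduce everything to two key implications. The implication \textup{(i)}$\Rightarrow$\textup{(ii)} is immediate from the definition of an eventually homological isomorphism. For the remaining implications, the backbone is to play off the compositional identities from Proposition~\ref{properties}, namely $\me\mi = 0$, $\me\ml \simeq \iden_\C$ and $\me\mr \simeq \iden_\C$, against the hypothesis that $\me$ (or extension groups transported through $\me$) collapses in high degree.

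For \textup{(ii)}$\Rightarrow$\textup{(iii)}, assume $\me$ is a $t$-eventually homological isomorphism. For $(\alpha)$, apply the isomorphism to $Y = \mi(A)$ with $A\in\A$: the target $\Ext^j_\C(\me(B),\me\mi(A))$ vanishes since $\me\mi = 0$, so $\id_\B \mi(A)\leq t$ for every $A$, giving $\igld_\B\A\leq t$. For $(\beta)$, take $P\in\Proj\B$ and any $C\in\C$, and compute
\[
\Ext^j_\C(\me(P),C) \simeq \Ext^j_\C(\me(P),\me\ml(C)) \simeq \Ext^j_\B(P,\ml(C)) = 0
\]
for $j > t$, using $\me\ml\simeq\iden_\C$ and then projectivity of $P$. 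The implication \textup{(ii)}$\Rightarrow$\textup{(iv)} is the mirror image: $(\gamma)$ follows by plugging $X=\mi(A)$ and using $\me\mi=0$, and $(\delta)$ follows by writing $C = \me\mr(C)$ and transporting to $\Ext^j_\B(\ml(C), I) = 0$ for $I\in\Inj\B$.

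The heart of the proof is \textup{(iii)}$\Rightarrow$\textup{(i)}, where Theorem~\ref{thm:ext-iso-proj} is the key lever. Set $m - 1 := \igld_\B\A$ and $k := \sup_{P\in\Proj\B}\pd_\C\me(P)$; I want to verify condition \textup{(iii)} of Theorem~\ref{thm:ext-iso-proj} with $n = m$ for every $B\in\B$, as this together with the bound $\pd_\C\me(P)\leq k$ will output statement \textup{(i)} with $t = m + k$. The vanishing $\Ext^j_\B(B,\mi(A)) = 0$ for $j > m-1$ is immediate from the definition of $\igld_\B\A$. The nontrivial point is exhibiting the $m$-th syzygy $\Omega^m(B)$ in ${}^\bot\mi(\A)$. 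Here the finite length assumption on $\A$ combined with exactness of $\mi$ lets me reduce to the simple case: for a simple $S\in\A$, Lemma~\ref{lemisomorphismsimpleobject} (applied in $\B$, noting $\mi(S)$ is simple in $\B$ by Proposition~\ref{properties}\textup{(vi)} and that $\B$ has projective covers by hypothesis) gives
\[
\Hom_\B(\Omega^m(B),\mi(S)) \simeq \Ext^m_\B(B,\mi(S)) = 0.
\]
A short induction on composition length, using the left exactness of $\Hom_\B(\Omega^m(B),-)$ applied to short exact sequences $0\to\mi(A')\to\mi(A)\to\mi(A'')\to 0$ (which are exact thanks to exactness of $\mi$), extends this vanishing to arbitrary $A\in\A$, placing $\Omega^m(B)$ in ${}^\bot\mi(\A)$ as required.

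For \textup{(iv)}$\Rightarrow$\textup{(i)} the strategy is completely dual, working with injective coresolutions, cosyzygies $\Sigma^j$, and the dual orthogonal ${\mi(\A)}^\bot$; one needs the evident injective-envelope analogue of Lemma~\ref{lemisomorphismsimpleobject} to conclude that an $n$-th cosyzygy of $Y$ lies in $\mi(\A)^\bot$ for suitable $n$, and then a dual form of Theorem~\ref{thm:ext-iso-proj} yields \textup{(i)} with the bound $t$ at most the sum of the dimensions in \textup{(iv)}. The main obstacle throughout is the syzygy/cosyzygy condition of Theorem~\ref{thm:ext-iso-proj}: merely knowing $\Ext^j_\B(B,\mi(A)) = 0$ in high degree does \emph{not} automatically push $\Omega^m(B)$ into ${}^\bot\mi(\A)$, and it is precisely the finite length of $\A$ together with Lemma~\ref{lemisomorphismsimpleobject} that bridges the gap between vanishing of $\Ext^m$ and vanishing of $\Hom$ on a syzygy.
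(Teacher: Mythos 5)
Your proposal is correct and follows essentially the same route as the paper's proof: (i)$\Rightarrow$(ii) is immediate; (ii)$\Rightarrow$(iii) and (ii)$\Rightarrow$(iv) are handled by plugging $\mi(A)$, $\ml(C)$ (resp.\ $\mr(C)$) into the eventual-homological-isomorphism and using $\me\mi=0$, $\me\ml\simeq\iden_\C$, $\me\mr\simeq\iden_\C$; and (iii)$\Rightarrow$(i) uses Lemma~\ref{lemisomorphismsimpleobject} to turn $\Ext^m_\B(B,\mi(S))=0$ for simple $S$ into $\Hom_\B(\Omega^m(B),\mi(S))=0$, then an induction on composition length (made possible by the finite-length hypothesis) to get $\Omega^m(B)\in{}^\bot\mi(\A)$, after which Theorem~\ref{thm:ext-iso-proj} together with $(\beta)$ closes the loop (with (iv)$\Rightarrow$(i) dual). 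The only quibble is a small slip in your $(\delta)$ argument, where you write $C=\me\mr(C)$ but transport to $\Ext^j_\B(\ml(C),I)$; either substitution $\ml(C)$ or $\mr(C)$ works since $I$ is injective, so the discrepancy is harmless but worth fixing for consistency.
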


\begin{rem}\label{remdefevenhomisom}
\begin{enumerate}[\rm(i)]
\item The implication (ii)$\Longrightarrow$(i) in
  Proposition~\ref{prop:algebra-ext-iso} shows that whenever we know
  that the quotient functor $\me$ is an eventually homological
  isomorphism, we can obtain the desired isomorphisms from the functor
  $\me$. This explains why in Definition~\ref{defnevenhomiso} we don't
  require these isomorphisms to be induced by the involved functor.

\item It is natural to consider if any other pair of the four
  conditions $(\alpha)-(\delta)$ in
  Proposition~\ref{prop:algebra-ext-iso} implies that the functor
  $\me$ is an eventually homological isomorphism. We refer to
  \cite[Subsection 8.1]{PSS} for examples showing that this is not the
  case.
\end{enumerate}
\end{rem}

\section{Arrow removal and finitistic dimension}
\label{sec:cleft-findim}

In this and the following section the finitistic dimension of an
abelian category with enough projectives is discussed.  Recall that
given an abelian category $\A$ with enough projectives, the {\bf
  finitistic projective dimension} of $\A$ is defined by
\[
\Findim{\A}=\sup\{\pd{_{\A}A} \ | \ \pd{_{\A}A}<\infty\}.
\]
In the first subsection we begin by investigating cleft extensions of
abelian categories where the images of the endofunctors $F$ and $G$
have bounded projective dimension.  Such cleft extensions occur when
factoring out ideals generated by arrows not occurring in a set of
minimal relations (with some conditions) of finite dimensional
quotients of path algebras.  These special cleft extensions we study
and characterize in the second and the final subsection.

\subsection{Cleft extensions with special endofunctors $F$ and
    $G$}  
The exact sequences \eqref{splitsequencegivesF} and
\eqref{firstfundamentalsequence} give rise to the following exact
sequences of functors
\begin{equation}\label{firstexactsequence}
\xymatrix@C=0.5cm{
0 \ar[rr] && F \ar[rr]^{} && \me\ml \ar[rr]^{} && \iden_{\B}
\ar[rr] && 0  }
\end{equation}
\begin{equation}
\label{secondexactsequence}
\xymatrix@C=0.5cm{
0 \ar[rr] && G \ar[rr]^{} && \ml\me \ar[rr]^{} && \iden_{\A}
\ar[rr] && 0  }
\end{equation}
Consider the following  
\begin{equation}
\label{conditionsfortheImageFandG}
\Image{F}\subseteq \mathcal{P}^{n_{\B}}(\B) \ \ \ \ \text{and} \ \ \ \
\Image{G}\subseteq \mathcal{P}^{n_{\A}}(\A).
\end{equation}
This means that all objects in $\Image{F}$ and $\Image{G}$ have finite
projective dimension and there is a uniform bound for the length of
the shortest projective resolutions, which is $n_{\B}$ and $n_{\A}$, 
respectively. This is a general version of Theorem~A (ii) presented in the Introduction.

\begin{thm}\label{thm:cleftextfindim}
  Let $(\B,\A, \me, \ml, \mi)$ be a cleft extension $\A$ of $\B$,
  where $\A$ and $\B$ are abelian categories with enough projectives,
  such that condition $(\ref{assumptionscleftext})$ holds.  Assume in
  addition the conditions \eqref{conditionsfortheImageFandG}.
  Then:
\begin{enumerate}[\rm(i)]
\item $\Findim\A\leq \maxx\big\{\Findim\B, n_{\A}+1 \big\}$.
\item $\Findim\B\leq \maxx\big\{\Findim\A, n_{\B}+1 \big\}$.
\end{enumerate} 
In particular, 
\[
 \Findim\A < \infty \text{\ if and only if\ } \Findim\B < \infty.
\]
\end{thm}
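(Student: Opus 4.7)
The plan is to exploit the two fundamental short exact sequences
\[0 \to G(A) \to \ml\me(A) \to A \to 0\]
in $\A$ and
\[0 \to F(B) \to \me\ml(B) \to B \to 0\]
in $\B$, coming from \eqref{firstexactsequence} and \eqref{secondexactsequence}, together with the fact that under our hypotheses both $\me$ and $\ml$ are exact and preserve projectives (exactness of $\me$ is Definition~\ref{defncleftext}(a); exactness of $\ml$ and preservation of projectives by $\me$ are given by condition~\eqref{assumptionscleftext}; preservation of projectives by $\ml$ is Lemma~\ref{basicproperties}(iii)). The key consequence is that applying either functor to a projective resolution produces a projective resolution of the image, so $\pd_\B \me(X) \leq \pd_\A X$ for every $X \in \A$ and $\pd_\A \ml(Y) \leq \pd_\B Y$ for every $Y \in \B$.

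For (i), fix $A \in \A$ with $\pd_\A A < \infty$. Then $\pd_\B \me(A) \leq \pd_\A A < \infty$, so $\pd_\B \me(A) \leq \Findim \B$, and therefore $\pd_\A \ml\me(A) \leq \pd_\B \me(A) \leq \Findim \B$. The hypothesis $\Image G \subseteq \mathcal{P}^{n_{\A}}(\A)$ gives $\pd_\A G(A) \leq n_{\A}$. Feeding these into the short exact sequence for $A$ via the standard inequality
\[\pd_\A A \leq \maxx\bigl\{\pd_\A G(A)+1,\; \pd_\A \ml\me(A)\bigr\}\]
yields $\pd_\A A \leq \maxx\{n_{\A}+1,\, \Findim \B\}$, and taking the supremum over all such $A$ proves (i).

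Claim (ii) is entirely dual: for $B \in \B$ with $\pd_\B B < \infty$, the inequality $\pd_\A \ml(B) \leq \pd_\B B$ forces $\pd_\A \ml(B) \leq \Findim \A$, hence $\pd_\B \me\ml(B) \leq \pd_\A \ml(B) \leq \Findim \A$; combined with $\pd_\B F(B) \leq n_{\B}$ applied to the short exact sequence for $B$, this gives $\pd_\B B \leq \maxx\{n_{\B}+1,\, \Findim \A\}$. The final equivalence $\Findim \A < \infty \Leftrightarrow \Findim \B < \infty$ is then immediate from (i) and (ii).

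The argument is essentially bookkeeping with short exact sequences, and the step I would check most carefully is the bound on $\pd_\A \ml\me(A)$: using only the inequality $\pd_\A \ml\me(A) \leq \pd_\A A$ would be circular, so one must route through $\pd_\B \me(A)$ to obtain the uniform bound $\Findim \B$. This two-step passage, which uses that \emph{both} $\me$ and $\ml$ preserve projectives, is precisely where condition~\eqref{assumptionscleftext} plays its essential role.
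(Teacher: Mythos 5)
Your proof is correct and follows essentially the same approach as the paper: establish the chain of inequalities coming from exactness and preservation of projectives by $\me$ and $\ml$, route through the quotient category to pass from $\pd_{\A}A < \infty$ to the uniform bound $\pd_{\A}\ml\me(A) \leq \Findim\B$ (respectively $\pd_{\B}\me\ml(B) \leq \Findim\A$), and then apply the standard projective-dimension estimate to the short exact sequences defining $G$ and $F$. The circularity you flag at the end — that one must pass through $\pd_{\B}\me(A)$ rather than use $\pd_{\A}\ml\me(A) \leq \pd_{\A}A$ directly — is exactly the crux of the argument, and your handling of it matches the paper's.
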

\begin{proof}
  For any object $B$ in $\B$, using that the functors $\ml$ and $\me$
  are exact and preserve projective objects (by Lemma
  \ref{basicproperties} (iii) and (vii)), we have
\[
\pd_{\B}\me\ml(B) \leq \pd_{\A}\ml(B) \leq \pd_{\B}B.
\]
Similarly, for any object $A$ in $\A$ we have
\[
\pd_{\A}\ml\me(A) \leq \pd_{\B}\me(A) \leq \pd_{\A}A.
\]
Let $B$ be an object in $\B$ of finite projective dimension. Then we
obtain that $\pd_{\A}\ml(B)\leq\pd_\B B\leq \Findim\B$ since the
projective dimension of $B$ is finite. This implies that
$\pd_{\B}\me\ml(B)\leq \Findim\A$. Similarly, if $A$ is an object in
$\A$ of finite projective dimension, then we get that
$\pd_{\B}\me(A)\leq \Findim\A$ and therefore
$\pd_{\A}\ml\me(A)\leq \Findim\B$.  Then, if $\pd_{\B}B<\infty$, from
the exact sequence \eqref{firstexactsequence} it follows that
\[
\pd_{\B}B \leq \maxx \big\{\pd_{\B}\me\ml(B), \pd_{\B}F(B)+1
\big\} \leq \maxx \big\{ \Findim\A, n_{\B}+1 \big\}. 
\]
This implies that 
\begin{equation}
\label{firstinequality}
\Findim\B\leq \maxx\big\{\Findim\A, n_{\B}+1 \big\}. 
\end{equation}
Suppose now that $\pd_{\A}A<\infty$. Then from the exact sequence
\eqref{secondexactsequence} it follows that
\[
\pd_{\A}A \leq \maxx \big\{\pd_{\A}\ml\me(A),
\pd_{\A}G(A)+1 \big\} \leq \maxx \big\{ \Findim\B,
n_{\A}+1 \big\}, 
\]
and therefore we have 
\begin{equation}
\label{secondinequality}
\Findim\A\leq \maxx\big\{\Findim\B, n_{\A}+1 \big\}.
\end{equation}
From the relations \eqref{firstinequality} and
\eqref{secondinequality} it follows that $\Findim\A<\infty$ if and
only if $\Findim\B<\infty$.
\end{proof}

\subsection{Arrow removal}\label{sec:trivext} 
Now we apply the result from the previous subsection to quotients of
path algebras by admissible ideals, where we factor out an ideal
generated by arrows which do not occur in a minimal set of relations
with additional properties.

Let $\Lambda=kQ/I$ be an admissible quotient of a path algebra $kQ$.
If $a$ is an arrow in $Q$ which is not occurring in a minimal
generating set for $I$, then we refer to the quotient $\Gamma =
\Lambda/\langle a \rangle$ as \emph{arrow removal}.  In this subsection
we have two aims:
\begin{enumerate}[1)]
\item To show that arrow removal is a cleft extension
satisfying the conditions in Theorem \ref{thm:cleftextfindim}. 
\item Characterize arrow removal as trivial extensions with projective
  bimodules with special properties. 
\end{enumerate}

We start with the second goal. We remark that
Corollary~\ref{cor:trivialextwithcond} and
Proposition~\ref{prop:Lambda-a-Lambda} provide a general version of
Theorem~A (i), as presented in the Introduction, for removing multiple
arrows that do not occur in a minimal generating set of $I$.

Let $\Gamma = kQ^*/I^*$ be an admissible quotient of a path algebra
$kQ^*$.  First we analyse trivial extensions
$\Lambda = \Gamma\ltimes P$ with a projective
$\Gamma$-$\Gamma$-bimodule $P$ and show it is the same as adding one
arrow for each indecomposable direct summand of $P$ to $Q^*$ and
adding the relations $akQ^*b$ for all the different arrows $a$ and $b$
that we are adding.

\begin{prop}\label{prop:quiveroftrivext}
  Let $\Gamma = kQ^*/I^*$ for a field $k$, a finite quiver $Q^*$ and
  an admissible ideal $I^*$ in $kQ^*$.  Let $P$ be the projective
  $\Gamma^\evl$-module given by $P = \oplus_{i=1}^t\Gamma e_i\otimes_k
  f_i\Gamma$ for some trivial paths $e_i$ and $f_i$ in
  $Q^*$ for $i = 1,2,\ldots,t$, where $\Gamma^\evl =
  \Gamma^\op\otimes_k \Gamma$.  Then the trivial extension $\Lambda =
  \Gamma\ltimes P$ is isomorphic to a quotient $kQ/I$, where $Q$ has
  the same vertices and the same arrows as $Q^*$ with one arrow
  $a_i\colon v_{e_i}\lxr v_{f_i}$ added for each indecomposable summand
  $\Gamma e_i\otimes_k f_i\Gamma$ of $P$ and $I = \langle I^*,
  \sum_{i,j=1}^{t,t} a_ikQ^*a_j\rangle$ in $kQ$.  
\end{prop}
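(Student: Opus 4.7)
The plan is to define an explicit surjective $k$-algebra homomorphism $\phi\colon kQ \lxr \Lambda$, with $\Lambda = \Gamma \ltimes P$, and then verify that its kernel is exactly $I$ by grading $kQ$ according to the number of new arrows that appear in a path.

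First I would set $\phi$ on trivial paths and original arrows to be the composition $kQ^* \lxr \Gamma \hookrightarrow \Gamma \ltimes P$, $x \mapsto (\bar x, 0)$, and on each new arrow put $\phi(a_i) = (0, e_i \otimes f_i)$. Since $e_i$ is the trivial path at $v_{e_i}$ and $f_i$ the one at $v_{f_i}$, the element $(0, e_i \otimes f_i)$ has the correct source and target in $\Lambda$, so $\phi$ extends to a $k$-algebra homomorphism. To show $I \subseteq \ker \phi$, note that every element of $I^*$ is mapped to $(0,0)$, and for any generator of the form $a_i p a_j$ with $p \in kQ^*$ the image is the product $(0, e_i \otimes f_i)(\bar p, 0)(0, e_j \otimes f_j)$, which vanishes because the product of any two elements of $P$ is zero in a trivial extension. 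Thus $\phi$ induces $\bar\phi\colon kQ/I \lxr \Lambda$, and surjectivity is immediate because $\Gamma$ is generated by classes of trivial paths and original arrows, while $P$ is generated as a $\Gamma$-bimodule by the elements $e_i \otimes f_i$.

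For injectivity I would argue via a graded dimension count. Grade $kQ = \bigoplus_{n \geq 0} kQ_{(n)}$ by the number of occurrences of new arrows in a path, so $kQ_{(0)} = kQ^*$. The ideal $I$ is graded because $I^*$ lies in degree $0$ and each $a_i kQ^* a_j$ lies in degree $2$. In degree $0$ one has $I \cap kQ_{(0)} = I^*$. For $n \geq 2$, any path with $n$ new arrows contains a subpath of the form $a_i p a_j$ with $p \in kQ^*$ (taking $p$ to be the trivial path at, or the subpath of $Q^*$ lying between, two consecutive new arrows), hence lies in $I$, so $kQ_{(n)} \subseteq I$. In degree $1$ the summand $kQ_{(1),i}$ of paths using $a_i$ identifies with $kQ^* e_i \otimes_k f_i kQ^*$ via $p a_i q \leftrightarrow p \otimes q$; the intersection $I \cap kQ_{(1),i}$ corresponds under this identification to $I^* e_i \otimes_k f_i kQ^* + kQ^* e_i \otimes_k f_i I^*$.

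The inclusion $\supseteq$ is immediate, and the reverse inclusion is the main technical point: writing any element of $I$ as $\sum u_\alpha g_\alpha v_\alpha$ with $g_\alpha$ a generator and $u_\alpha, v_\alpha \in kQ$ and extracting the degree-$1$ component, the generators $g_\alpha \in a_i kQ^* a_j$ have degree $2$ and can only contribute in degrees $\geq 2$, so all degree-$1$ contributions come from $g_\alpha \in I^*$ with exactly one of $u_\alpha, v_\alpha$ carrying a single new arrow. Using the standard tensor quotient $(A \otimes B)/(A' \otimes B + A \otimes B') \iso A/A' \otimes B/B'$ now identifies the degree-$1$ part of $kQ/I$ with $\bigoplus_i \Gamma e_i \otimes_k f_i \Gamma = P$. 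Combined with the degree-$0$ identification $kQ_{(0)}/I^* = \Gamma$, this gives a $k$-linear isomorphism $kQ/I \iso \Gamma \oplus P$ of dimension equal to $\dim_k \Lambda$, so the surjection $\bar\phi$ is an isomorphism.
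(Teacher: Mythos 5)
Your proposal is correct and follows essentially the same route as the paper: it defines the same explicit surjection $kQ\to\Gamma\ltimes P$ on vertices, old arrows and new arrows, observes that $I^*$ and $a_ikQ^*a_j$ die in the trivial extension, and for the converse reduces modulo the degree-$\geq 2$ (in new arrows) part to the degree-$0$ and degree-$1$ pieces, identifying the latter with $\Gamma e_i\otimes_k f_i\Gamma$ via exactly the tensor-quotient isomorphism the paper uses. The grading-by-number-of-new-arrows language you introduce is just a slightly more systematic packaging of the paper's reduction; both arguments hinge on the same computation.
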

\begin{proof}
The radical $\rad(\Lambda)$ of $\Lambda$ is given by
$\rad(\Gamma)\ltimes P$, and therefore 
\[\rad(\Lambda)^2 =
\rad(\Gamma)^2\ltimes (\rad(\Gamma) P + P\rad(\Gamma)).\]
This gives that $\Lambda/\rad(\Lambda) \simeq kQ^*_0$ and
\begin{align}
\rad(\Lambda)/\rad(\Lambda)^2  & \simeq \rad(\Gamma)/\rad(\Gamma)^2 \ltimes
P/(\rad(\Gamma) P + P\rad(\Gamma))\notag\\
& \simeq kQ^*_1\ltimes \oplus_{i=1}^t (kQ^*_0 e_i\otimes_k
f_ikQ^*_0).\notag 
\end{align} 
It follows from this that the vertices in $Q$ are the same as the
vertices in $Q^*$ and that $Q$ has the same arrows as $Q^*$ and in
addition has one additional arrow $a_i$ for each indecomposable direct
summand $\Gamma e_i\otimes_k f_i\Gamma$ of $P$ corresponding to the
vectorspace $0\ltimes kQ^*_0e_i\otimes f_ikQ^*_0$.  The new arrow $a_i$
correspond to the element $x_i=(0, e_i\otimes f_i)$, which has the property
that
\[e_i x_i f_i = (0, e_i\cdot e_i \otimes f_i\cdot f_i) = (0, e_i\otimes f_i) = x_i.\] 
Hence $a_i\colon v_{e_i}\lxr v_{f_i}$. 

This shows that there is a surjective homomorphism of algebras
$\varphi\colon kQ\lxr \Lambda$ defined by
\[\varphi(x) = 
\begin{cases}
(x,0), \text{\ if $x\in Q_0^*\cup Q_1^*$},\\
(0, e_i\otimes f_i), \text{\ if $x = a_i$ for $i=1,2,\ldots, t$}.
\end{cases}\]
It is clear that $\langle I^*\rangle$ and
$\sum_{i,j=1}^{t,t}a_ikQ^*a_j$ are in the kernel of $\varphi$, since
$(0\ltimes P)^2 = 0$.  We want to show that 
\[\Ker\varphi = \langle I^*, \sum_{i,j=1}^{t,t}a_ikQ^*a_j\rangle.\]

Let $z$ be an arbitrary element in the ideal generated by the arrows
in $kQ$ and in the kernel of $\varphi$.  Given that we know
$\langle I^*, \sum_{i,j=1}^{t,t}a_ikQ^*a_j\rangle$ is in the kernel of
$\varphi$, we only need to consider elements $z$ of the form
\[z = r + \sum_{i,j=1}^{t,s_i} r_{1ij} a_i r_{2ij}.\] 
for $r$, $r_{1ij}$ and $r_{2ij}$ in $kQ^*$, where $r_{1ij} = r_{1ij}e_i$ and
$r_{2ij} = f_ir_{2ij}$ for $1\leqslant j \leqslant s_i$.  By the
algebra homomorphism $\varphi$ this element is mapped to
\[0 = (r, \sum_{i,j=1}^{t,s_i} r_{1ij}x_i r_{2ij}).\] Then we must
have that $r = 0$ in $\Gamma$ and $\sum_{i,j=1}^{t,s_i} r_{1ij}x_i
r_{2ij} = 0$ in $P$.  We have that
\[r_{1ij}x_ir_{2ij} = r_{1ij}(0, e_i\otimes f_i)r_{2ij} = (0, r_{1ij}e_i\otimes
f_ir_{2ij}) = (0, r_{1ij}\otimes r_{2ij}),\]
and this gives that 
\[
\sum_{i,j=1}^{t,s_i} r_{1ij} x_ir_{2ij} = \sum_{i,j=1}^{t,s_i} (0,r_{1ij}\otimes r_{2ij})
 = (0,\sum_{i=1}^t  \sum_{j=1}^{s_i}r_{1ij}\otimes r_{2ij})
\]
and in particular that $\sum_{j=1}^{s_i} r_{1ij}\otimes
r_{2ij} = 0$ in $\Gamma e_i\otimes_k f_i\Gamma$ for each $i$.  Since 
\[\Gamma e_i\otimes_k f_i\Gamma \simeq kQ^*e_i\otimes_k f_ikQ^*/(I^*e_i\otimes_k
  f_ikQ^* + kQ^*e_i\otimes_k f_iI^*),\]
it follows from the above that kernel of $\varphi$ is $\langle I^*,
\sum_{i,j=1}^{t,t} a_ikQ^*a_j\rangle$ in $kQ$.  This completes the proof of the
proposition.
\end{proof}

Now we prove that a trivial extension of an algebra $\Gamma$ with a
finitely generated projective bimodule $P$ with certain properties
gives rise to an algebra where arrows can be removed.  This is the
first part of the characterization of arrow removal. 

\begin{cor}\label{cor:trivialextwithcond}
  Let $\Gamma = kQ^*/I^*$ for a field $k$, a finite quiver $Q^*$ and
  an admissible ideal $I^*$ in $kQ^*$.  Let $P$ be the projective
  $\Gamma^\evl$-module given by
  $P = \oplus_{i=1}^t\Gamma e_i\otimes_k f_i\Gamma$ for some trivial
  paths $e_i$ and $f_i$ in $Q^*$ for $i = 1,2,\ldots,t$, where
  $\Gamma^\evl = \Gamma^\op\otimes_k \Gamma$.  Suppose that
  $\Hom_\Gamma( e_i\Gamma, f_j\Gamma) = 0$ for all $i$ and $j$ in
  $\{1,2,\ldots,t\}$.
\begin{enumerate}[\rm(i)]
\item The trivial extension $\Lambda = \Gamma\ltimes P$ is isomorphic
  to a quotient $kQ/I$, where $Q$ has the same vertices and the same
  arrows as $Q^*$ with one arrow $a_i\colon v_{e_i}\lxr v_{f_i}$ added
  for each indecomposable summand $\Gamma e_i\otimes_k f_i\Gamma$ of
  $P$ and $I = \langle I^*\rangle$ in $kQ$.  Here $v_{e_i}$ and
  $v_{f_i}$ are the vertices corresponding to the primitive
  idempotents $e_i$ and $f_i$, respectively.  
\item The arrows $a_i$ do not occur in a minimal set of generators for
  the relations $I$, and $\Hom_\Lambda( e_i\Lambda, f_j\Lambda) = 0$
  for all $i$ and $j$ in $\{1,2,\ldots,t\}$.
\end{enumerate}
\end{cor}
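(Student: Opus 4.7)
The plan is to deduce (i) by strengthening Proposition~\ref{prop:quiveroftrivext}: under the additional Hom vanishing hypothesis, the ``extra'' piece in the ideal collapses into $\langle I^*\rangle$. Part (ii) then follows quickly from the clean presentation $I = \langle I^*\rangle$.

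First I would reinterpret the hypothesis in ring-theoretic terms. For right $\Gamma$-modules one has $\Hom_\Gamma(e_i\Gamma, f_j\Gamma) \cong f_j\Gamma e_i$, so the assumption is equivalent to $f_j\Gamma e_i = 0$ for every pair $i,j \in \{1,\ldots,t\}$, and by the symmetric role of the indices also $f_i\Gamma e_j = 0$. Lifted to $kQ^*$, this says that every path from $v_{f_i}$ to $v_{e_j}$ already lies in $I^*$. With this in hand, Proposition~\ref{prop:quiveroftrivext} gives $\Lambda \cong kQ/J$ with $Q$ the stated quiver and $J = \langle I^*, \sum_{i,j=1}^{t,t} a_i kQ^* a_j\rangle$, and it is enough to show that the second summand is contained in $\langle I^*\rangle$. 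Any composable monomial $a_i p a_j$ forces $p \in f_i kQ^* e_j$; by the reinterpreted hypothesis $p \in I^*$, so $a_ipa_j \in a_iI^*a_j \subseteq \langle I^*\rangle$. Hence $J = \langle I^*\rangle$, proving (i).

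For part (ii), I first treat the Hom vanishing. The bimodule decomposition $\Lambda = \Gamma \oplus P$ yields
\[
\Hom_\Lambda(e_i\Lambda, f_j\Lambda) \cong f_j\Lambda e_i \cong f_j\Gamma e_i \,\oplus\, f_jPe_i.
\]
The first summand vanishes by hypothesis, while $f_jPe_i \cong \bigoplus_{k=1}^t f_j\Gamma e_k \otimes_k f_k\Gamma e_i$ vanishes term by term since already $f_j\Gamma e_k = 0$ for all $k$. For the minimal generator claim, consider the surjective algebra map $\pi\colon kQ\to kQ^*$ killing every $a_i$ and fixing the remaining arrows and vertices. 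Any minimal generating set $\{r_\alpha\}$ for $I^*$ inside $kQ^*$ (contained in $\rad(kQ^*)^2$ by admissibility) is a generating set of $I = \langle I^*\rangle$ in $kQ$ that involves no $a_i$. Linear independence of $\{r_\alpha\}$ modulo $\rad(kQ) I + I \rad(kQ)$ follows from the corresponding independence over $kQ^*$ by applying $\pi$, which sends $\rad(kQ) I + I \rad(kQ)$ into $\rad(kQ^*) I^* + I^* \rad(kQ^*)$ and is the identity on each $r_\alpha$. Thus $\{r_\alpha\}$ is minimal in $kQ$ as well, and no $a_i$ appears.

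The main subtlety I foresee is the minimality argument in (ii): one must rule out the possibility that some new minimal generator becomes visible only in $kQ$ by exploiting combinations involving the arrows $a_i$. The Hom vanishing hypothesis, via part (i), is precisely what prevents this: every ``mixed'' expression containing an $a_i$ already sits inside $\langle I^*\rangle$, so passing from $kQ^*$ to $kQ$ adds no new relation up to what is already forced by $I^*$. Careful bookkeeping through the vector space $I/(\rad(kQ)I + I\rad(kQ))$ as outlined above then finishes the argument.
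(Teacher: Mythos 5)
Your proof is correct and follows essentially the same route as the paper: part (i) reduces to showing $\sum_{i,j}a_ikQ^*a_j\subseteq\langle I^*\rangle$ via the reinterpretation $f_i\Gamma e_j = 0$ and then invokes Proposition~\ref{prop:quiveroftrivext}, and the computation of $\Hom_\Lambda(e_i\Lambda,f_j\Lambda)$ in (ii) is identical to the paper's. The one place you go further is the minimality claim in (ii), which you justify carefully via the retraction $kQ\to kQ^*$ and linear independence modulo $\rad(kQ)I + I\rad(kQ)$, whereas the paper dispatches it with ``follows directly from (i)''; your argument is a correct fleshing-out of exactly that remark.
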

\begin{proof}
  (i) We have that
  $0 = \Hom_\Gamma( e_i\Gamma, f_j\Gamma) \simeq f_j\Gamma e_i$ for
  all $i$ and $j$, which is equivalent to saying that $f_jkQ^*e_i$ is
  contained in $I^*$ for all $i$ and $j$.  We infer from this that
  $ a_jkQ^*a_i = a_j(f_jkQ^*e_i)a_i$ is in the ideal
  $\langle I^*\rangle$ in $kQ$.  The claim then follows immediately
  from Proposition \ref{prop:quiveroftrivext}.

  (ii) The first claim follows directly from (i).  We have that
\begin{align}
\Hom_\Lambda(e_i\Lambda, f_j\Lambda) & \simeq f_j\Lambda e_i\notag\\
& = f_j(\Gamma\ltimes P)e_i\notag\\
& = (f_j\Gamma e_i)\ltimes (f_jPe_i)\notag\\
& = (f_j\Gamma e_i)\ltimes \oplus_{l=1}^t (f_j\Gamma e_l\otimes_k
f_l\Gamma e_i)\notag
\end{align}
By assumption $f_r\Gamma e_s = 0$ for all $r$ and $s$ in
$\{1,2,\ldots,t\}$, so we obtain that 
\[\Hom_\Lambda( e_i\Lambda, f_j\Lambda ) = 0\]
for all $i$ and $j$ in $\{1,2,\ldots,t\}$.  This completes the proof
of the proposition.
\end{proof}

Next we prove the converse of the above result.  This needs some
preparation.  Let $\Lambda = kQ/I$ be an admissible quotient of the
path algebra $kQ$ over a field $k$.  Suppose that there are arrows
$a_i\colon v_{e_i}\lxr v_{f_i}$ in $Q$ for $i=1,2,\ldots,t$ which do
not occur in a set of minimal generators of $I$ in $kQ$ and
$\Hom_\Lambda( e_i\Lambda, f_j\Lambda ) = 0$ for all $i$ and $j$ in
$\{1,2,\ldots,t\}$.  Let $\Gamma = \Lambda/\Lambda
\{\overline{a}_i\}_{i=1}^t \Lambda$ for $\overline{a}_i = a_i + I$ in
$\Lambda$.  We have the natural surjective algebra homomorphism
$\pi\colon \Lambda \lxr \Gamma$, and we claim that there is a natural
algebra inclusion $\nu\colon \Gamma\hookrightarrow \Lambda$ such that $\pi \nu =
\id_\Gamma$.  Let $Q^*$ be the subquiver of $Q$, where the arrows
$\{a_i\}_{i=1}^t$ have been removed.  The quiver inclusion morphism
$Q^*\lxr Q$ induces an inclusion $kQ^*\to kQ$ of path algebras. This
further induces an inclusion
\[\nu' \colon kQ^*/(kQ^* \cap I) \to kQ/I = \Lambda.\]
We want to show that $\Gamma \simeq kQ^*/(kQ^* \cap I)$.  We have that
\begin{align}
\Gamma = \Lambda/\Lambda\{\overline{a}_i\}_{i=1}^t\Lambda & =
\left(kQ/I\right)/\left((kQ/I)\{\overline{a}_i\}_{i=1}^t(kQ/I)\right)\notag\\
& = \left(kQ/I\right)/\left(kQ\{a_i\}_{i=1}^tkQ + I\right)/I\notag\\
& \simeq kQ/(kQ\{a_i\}_{i=1}^tkQ + I)\notag
\end{align}
Furthermore, 
\[kQ = kQ\{a_i\}_{i=1}^tkQ + kQ^*,\] where the sum is direct as
vectorspaces, hence $kQ = kQ\{a_i\}_{i=1}^tkQ \oplus kQ^*$.  In
addition, since $I$ is generated by $I\cap kQ^*$, it follows that
\[kQ\{a_i\}_{i=1}^tkQ + I = kQ\{a_i\}_{i=1}^tkQ + (I\cap kQ^*)\]
in $kQ$. As above,
\[kQ\{a_i\}_{i=1}^tkQ + I = kQ\{a_i\}_{i=1}^tkQ \oplus (I\cap kQ^*),\] 
and this implies that 
\begin{align}
\Gamma \simeq kQ/(kQ\{a_i\}_{i=1}^tkQ + I) & = (kQ\{a_i\}_{i=1}^tkQ
\oplus kQ^*)/(kQ\{a_i\}_{i=1}^tkQ \oplus (I\cap kQ^*))\notag\\
& \simeq kQ^*/(I\cap kQ^*).\notag
\end{align}
We infer from this that the epimorphism $\varphi\colon kQ^*/(I\cap
kQ^*) \to \Lambda/\Lambda\{a_i\}_{i=1}^t\Lambda = \Gamma$ given by
$\varphi( \overline{p} ) = \overline{ p + I}$ for $p$ in $kQ^*$ is an
isomorhism.  The inclusion $kQ^*\to kQ$ induces an inclusion
$\nu\colon kQ^*/(I\cap kQ^*) \to \Lambda$ in such a way that the
composition $\varphi^{-1}\pi\nu = \id$.  If we now identify $\Gamma$
with $kQ^*/(I\cap kQ^*)$, we have our desired result.

The exact sequence
\begin{equation}\label{eq:exact-eta}
\eta\colon 0\to \Lambda \{\overline{a}\}_{i=1}^t\Lambda \to \Lambda
\to \Gamma \to 0 
\end{equation}
can be considered as a sequence of $\Gamma$-$\Gamma$-bimodules. This
sequence splits as an exact sequence of one-sided $\Gamma$-modules, so
that $\Lambda \simeq \Lambda \{\overline{a}\}_{i=1}^t\Lambda \oplus
\Gamma$ as a left and as a right $\Gamma$-module.  Next we prove that
$\Lambda \{\overline{a}\}_{i=1}^t\Lambda$ is a projective
$\Gamma$-$\Gamma$-bimodule and that we have the converse of Corollary
\ref{cor:trivialextwithcond}.

\begin{prop}\label{prop:Lambda-a-Lambda}
  Let $\Lambda = kQ/I$ be an admissible quotient of the path algebra
  $kQ$ over a field $k$.  Suppose that there are arrows $a_i\colon
  v_{e_i}\to v_{f_i}$ in $Q$ for $i=1,2,\ldots,t$ which do not occur
  in a set of minimal generators of $I$ in $kQ$ and $\Hom_\Lambda(
  e_i\Lambda, f_j\Lambda ) = 0$ for all $i$ and $j$ in
  $\{1,2,\ldots,t\}$.  Let $\Gamma = \Lambda/\Lambda
  \{\overline{a}_i\}_{i=1}^t \Lambda$.  Then the following assertions
  are true.
\begin{enumerate}[\rm(i)]
\item $f_j\Gamma e_i \simeq \Hom_\Gamma(e_i\Gamma, f_j\Gamma) = 0$ for
  all $i$ and $j$ in $\{1,2,\ldots,t\}$.  
\item \[\sum_{i=1}^t\Lambda \overline{a}_i \Lambda \simeq
  \oplus_{i=1}^t\Lambda\overline{a}_i\Lambda\]
and
\[\Lambda\overline{a}_i\Lambda \simeq \Gamma e_i\otimes_k f_i\Gamma\] 
as $\Gamma$-$\Gamma$-bimodules.  In particular,
$\sum_{i=1}^t\Lambda\overline{a}_i\Lambda$ is a projective
$\Gamma$-$\Gamma$-bimodule.
\item $\Lambda$ is isomorphic to the trivial extension $\Gamma\ltimes
  P$, where $P = \oplus_{i=1}^t \Gamma e_i\otimes_k f_i\Gamma$ with
  $\Hom_\Gamma(e_i\Gamma, f_j\Gamma) = 0$ for all $i$ and $j$ in
  $\{1,2,\ldots,t\}$. 
\end{enumerate}
\end{prop}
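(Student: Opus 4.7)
My plan is as follows. Part (i) is immediate: the adjunction $\Hom_\Lambda(e_i\Lambda, f_j\Lambda)\simeq f_j\Lambda e_i$ turns the hypothesis into $f_j\Lambda e_i = 0$, and then $f_j\Gamma e_i$, being a quotient of $f_j\Lambda e_i$ via the surjection $\pi\colon\Lambda\lxr\Gamma$, must vanish. The adjunction on the $\Gamma$-side then gives $\Hom_\Gamma(e_i\Gamma, f_j\Gamma)=0$.

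For (ii), set $J := \sum_{i=1}^t\Lambda\overline{a}_i\Lambda$. The first step is to observe that $J^2=0$: using $\overline{a}_l = e_l\overline{a}_l f_l$, one obtains
\[
\Lambda\overline{a}_j\Lambda\cdot\Lambda\overline{a}_i\Lambda \;=\; \Lambda\overline{a}_j\cdot(f_j\Lambda e_i)\cdot\overline{a}_i\Lambda \;=\; 0
\]
for all $i,j$, where the final equality uses the vanishing of $f_j\Lambda e_i$ from the hypothesis. Combined with the algebra splitting $\nu\colon\Gamma\hookrightarrow\Lambda$ established in the paragraph preceding the proposition, this shows $\Lambda = \nu(\Gamma)\oplus J$ as $\Gamma$-bimodules. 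For each $i$, I then plan to introduce the $\Gamma$-bimodule morphism
\[
\varphi_i\colon \Gamma e_i\otimes_k f_i\Gamma \lxr \Lambda\overline{a}_i\Lambda,\qquad \overline{p}\otimes\overline{q}\mapsto \nu(\overline{p})\cdot\overline{a}_i\cdot\nu(\overline{q}),
\]
which is well-defined (the ambiguity in the lifts from $\Gamma$ lies in $I\cap kQ^*\subseteq I$) and surjective because $J\overline{a}_i = \overline{a}_iJ = 0$ forces $\Lambda\overline{a}_i\Lambda = \nu(\Gamma)\overline{a}_i\nu(\Gamma)$.

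The main technical step is to prove simultaneously that each $\varphi_i$ is injective and that the sum $\sum_i\Lambda\overline{a}_i\Lambda$ is direct. The plan is to use the grading of $kQ$ in which every arrow of $Q^*$ has degree zero and every $a_l$ has degree one, refined to the grading by the pattern of occurrences of the $a_l$'s in a path. Since by hypothesis the arrows $a_l$ do not occur in a minimal generating set of $I$, the ideal $I$ is generated as a two-sided ideal of $kQ$ by elements of $kQ^*$, hence is homogeneous with respect to this grading, and in particular respects the decomposition $kQ_{(1)}=\bigoplus_i kQ^* a_i kQ^*$. Under the canonical $kQ^*$-bimodule isomorphism $kQ^* a_i kQ^*\simeq kQ^* e_i\otimes_k f_i kQ^*$, the intersection $I\cap kQ^* a_i kQ^*$ matches the kernel of the natural surjection $kQ^* e_i\otimes_k f_i kQ^*\epic \Gamma e_i\otimes_k f_i\Gamma$; this gives injectivity of $\varphi_i$ and directness of the sum in one stroke. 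Projectivity of $\Gamma e_i\otimes_k f_i\Gamma$ as a $\Gamma$-bimodule is automatic since it is a direct summand of $\Gamma\otimes_k\Gamma$.

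Part (iii) is then a clean consequence: the bimodule decomposition $\Lambda=\nu(\Gamma)\oplus J$ together with $J^2=0$ identifies $\Lambda$ with $\Gamma\ltimes J$, and (ii) identifies $J$ with $P:=\bigoplus_i\Gamma e_i\otimes_k f_i\Gamma$, the required Hom-vanishing on $P$ being nothing but part (i). The principal obstacle is the homogeneity argument used to secure injectivity and directness, which is precisely the place where the assumption that the $a_l$ are not in a minimal generating set of $I$ is used in an essential way.
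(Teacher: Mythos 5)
Your argument is correct and reaches the same conclusions, but it secures the crux of part (ii) -- injectivity of $\varphi_i$ and directness of $\sum_i\Lambda\overline a_i\Lambda$ -- by a genuinely different method. The paper does this via non-commutative Gr\"obner bases: it fixes a Gr\"obner basis $\mathcal G$ for $I$ in which no $a_i$ appears (Lemma~\ref{lem:groebneravoidingarrow}), writes a putative relation $\sum c_{ji}\lambda_{ji}\overline a_i\lambda'_{ji}=0$ with $\lambda,\lambda'\in\Nontip(I)$, and derives a contradiction by observing that the tip of the left-hand side must be divisible by some $\Tip(g)$ with $g\in\mathcal G$, which can only divide $\lambda_{j_0i_0}$ or $\lambda'_{j_0i_0}$. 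You instead exploit the $\mathbb N$-grading of $kQ$ by the number of occurrences of the removed arrows (refined to the grading by the occurrence pattern), and use the hypothesis to conclude that $I$ is a homogeneous ideal -- generated by the degree-zero elements of a minimal generating set -- whose degree-$\geq 2$ part is all of $kQ_{\geq 2}$ thanks to $f_j\Lambda e_i=0$. This lets you read off $I\cap kQ^*a_ikQ^*=I^*a_ikQ^*+kQ^*a_iI^*$ and hence identify $kQ^*a_ikQ^*/(I\cap kQ^*a_ikQ^*)$ with $\Gamma e_i\otimes_k f_i\Gamma$ directly, giving both injectivity and directness from the graded decomposition of $\Lambda=kQ/I$. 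The two routes rely on the same hypotheses in the same essential places, but your homogeneity argument is self-contained and avoids the Gr\"obner-basis apparatus from the appendix, while the paper's proof trades conceptual cleanliness for an explicit combinatorial handle (tips and normal forms) that the authors reuse when proving the one-arrow converse in Proposition~\ref{prop:onearrowchar}. One small remark: well-definedness of $\varphi_i$ requires no argument beyond the existence of the algebra section $\nu\colon\Gamma\hookrightarrow\Lambda$ established before the proposition, so your parenthetical about ambiguity of lifts is unnecessary, though harmless.
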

\begin{proof}
(i) Since $f_j\Lambda e_i = 0$ for all $i$ and $j$ in
$\{1,2,\ldots,t\}$ and $\nu\colon \Gamma \to \Lambda$ is an inclusion,
it follows that $0 = f_j\Gamma e_i \simeq \Hom_\Gamma(e_i\Gamma,
f_j\Gamma)$ for all $i$ and $j$ in $\{1,2,\ldots,t\}$. 

(ii) First we argue that $\sum_{i=1}^t\Lambda\overline{a}_i\Lambda$ is
a direct sum.  Since $f_j\Lambda e_i = 0$ for all $i$ and $j$ in
$\{1,2,\ldots,t\}$, it follows that $a_j\Lambda a_i = 0$ for all $i$
and $j$ in $\{1,2,\ldots,t\}$.  This implies that
\[\Lambda\overline{a}_i\Lambda = \Gamma\overline{a}_i\Gamma\]
as a $\Gamma$-$\Gamma$-bimodule.  Let 
\[x =
\sum_{i=1}^t\sum_{j=1}^{t_i}c_{ji}\lambda_{ji}\overline{a}_i\lambda'_{ji}\]
be in $\sum_{i=1}^t\Lambda\overline{a}_i\Lambda$, where $c_{ji}$ is in
$k \setminus \{0\}$, and the elements $\lambda_{ji}$ and $\lambda_{ji}'$
are in $\Nontip(I)$ by Lemma \ref{lem:normalform} (iii).  Assume that
$x = 0$, or equivalently, when $x$
is viewed as an element in $kQ$, then $x$ is in $I$.  Then the tip of
$x$ is divisible by a tip of an element of a Gr\"obner basis $\G$ for
$I$ in $kQ$ by the definition of a Gr\"obner basis (see Definition
\ref{defn:Groebner}). The tip of $x$ is of the form $\lambda_{j_0i_0} 
a_i\lambda'_{j_0i_0} = p\Tip(g)p'$ for some integers $i_0$ and $j_0$,
some paths $p$ and $p'$ and some $g$ in $\G$ by Lemma
\ref{lem:existencegroebner} (iii).  Since the 
arrows $a_i$ do not occur in a set of minimal generators for $I$, the
element $\Tip(g)$ must divide $\lambda_{j_0i_0}$ or
$\lambda'_{j_0i_0}$ by Lemma \ref{lem:groebneravoidingarrow}.  Since
$\lambda_{j_0i_0}$ and $\lambda'_{j_0i_0}$
are in $\Nontip(I)$, this is a contradiction.  Hence, we must have
$c_{j_0i_0} = 0$, which is another contradiction.  It follows that the
sum $\sum_{i=1}^t\Lambda\overline{a}_i\Lambda$ is direct. 

Now we show that $\Lambda\overline{a}_i\Lambda \simeq \Gamma
e_i\otimes_k f_i\Gamma$ for all $i$ in $\{1,2,\ldots,t\}$.  This shows
that $\sum_{i=1}^t\Lambda\overline{a}_i\Lambda$ is a projective
$\Gamma$-$\Gamma$-bimodule.

Consider the map 
\[\psi\colon \Gamma e_i\otimes_k f_i\Gamma \to
\Lambda\overline{a}_i\Lambda\]
given by $\psi(\gamma e_i\otimes f_i\gamma') = \gamma
e_i\overline{a}_if_i\gamma'$ for $\gamma e_i\otimes f_i\gamma'$ in
$\Gamma e_i\otimes_k f_i\Gamma$.  Any element $x$ in $\Gamma
e_i\otimes_k f_i\Gamma$ can be written as
\[x = \sum_{r=1}^n \alpha_r(\gamma_re_i\otimes f_i\gamma_r')\]
with $\alpha_r$ in $k\setminus \{0\}$ and $\gamma_r$ and $\gamma_r'$
in $\Nontip(I)\cap kQ^*$ by Lemma \ref{lem:nontipsarrowremoved}.  If
$x$ is in $\Ker\psi$, then 
\[\psi(x) = \sum_{r=1}^n \alpha_r
\gamma_re_i\overline{a}_if_i\gamma_r' = 0\]
in $\Lambda\overline{a}_i\Lambda$, or equivalently that $\psi(x)$ is
in $I$.  The tip of $\psi(x)$ is $\Tip(\psi(x)) =
\gamma_{r_0}a_i\gamma_{r_0}'$ for some $r_0$.  As $\psi(x)$ is in $I$,
this tip must be divisible by some tip of an elemenet $g$ in $\G$,
a Gr\"obner basis for $I$.  The arrow $a_i$ does not occur in any
element in $\G$ by Lemma \ref{lem:groebneravoidingarrow}, so we infer
that $\Tip(g)$ divides $\gamma_{r_0}$ 
or $\gamma_{r_0}'$.  But this is impossible, since $\gamma_{r_0}$ and
$\gamma_{r_0}'$ are elements in $\Nontip(I)$.  It follows that
$\alpha_{r_0}= 0$, which is a contradiction.  Hence we have that
$\Ker\psi = 0$ and $\Gamma e_i\otimes_k f_i\Gamma \simeq
\Lambda\overline{a}_i\Lambda$, which is a projective
$\Gamma$-$\Gamma$-bimodule.  This completes the proof of (ii).  

(iii) This follows from the comments before this proposition and parts
(i) and (ii). 
\end{proof}

Now we look at the special case removing only one arrow
$a\colon v_e\lxr v_f$ not occurring in a set of minimal generators of
$I$ and $\Hom_\Lambda(e\Lambda, f\Lambda) = 0$.  We show that the
second condition is superfluous. This is Theorem~A (i) as stated in
the Introduction. 

\begin{prop}\label{prop:onearrowchar}
  Let $\Lambda = kQ/I$ be an admissible quotient of a path algebra
  $kQ$ over a field $k$.  Then an arrow $a\colon v_e\lxr v_f$ in $Q$
  does not occur in a set of minimal generators of $I$ in $kQ$ if and
  only if $\Lambda$ is isomorphic to the trivial extension $\Gamma\ltimes
  P$, where $\Gamma \simeq \Lambda/\Lambda\overline{a}\Lambda$ and $P
  = \Gamma e\otimes_kf\Gamma$ with $\Hom_\Gamma(e\Gamma,
  f\Gamma)=(0)$.  
\end{prop}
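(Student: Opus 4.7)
The plan is to deduce this equivalence from the multi-arrow results of the previous subsection by specialising to $t=1$, and by verifying that in the single-arrow situation an extra vanishing occurs automatically. For the backward direction, Corollary~\ref{cor:trivialextwithcond}(ii) applied with $t=1$ directly gives that whenever $\Lambda \simeq \Gamma\ltimes(\Gamma e\otimes_k f\Gamma)$ with $\Hom_\Gamma(e\Gamma,f\Gamma)=0$, the sole added arrow $a$ does not occur in a minimal generating set of $I$. So the real content lies in the forward direction.

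For the forward direction, I aim to invoke Proposition~\ref{prop:Lambda-a-Lambda} with $t=1$: that result already produces the trivial-extension isomorphism and the vanishing $\Hom_\Gamma(e\Gamma,f\Gamma)=0$, but it requires the auxiliary input $\Hom_\Lambda(e\Lambda,f\Lambda)=f\Lambda e=0$. The single step missing is therefore to show that, in the single-arrow situation, this auxiliary hypothesis follows automatically from the assumption that $a$ is absent from a minimal generating set of $I$.

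To establish $f\Lambda e=0$, I argue by contradiction. Suppose there is a path $p\colon v_f\lxr v_e$ in $kQ$ with $\overline p\neq 0$ in $\Lambda$; by reducing modulo $I$ we may assume $p\in\Nontip(I)$ with respect to a fixed admissible monomial order on $kQ$. The element $\overline{ap}$ lies in the Jacobson radical of the finite-dimensional algebra $\Lambda$, so it is nilpotent and $(ap)^k\in I$ for some $k\geq 1$. By the assumption that $a$ is absent from a minimal generating set of $I$, Lemma~\ref{lem:groebneravoidingarrow} from the appendix furnishes a Gröbner basis $\mathcal G$ of $I$ whose elements all lie in $kQ^*$, and in particular whose tips are paths avoiding $a$. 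The defining property of a Gröbner basis (Lemma~\ref{lem:existencegroebner}) then forces $\Tip((ap)^k)=a(pa)^{k-1}p$ to be divisible by $\Tip(g)$ for some $g\in\mathcal G$. But reading $a(pa)^{k-1}p$ as a concatenation of the arrow-blocks of $p$ separated by the single letter $a$, and using that $\Tip(g)$ is itself an $a$-free path, the divisor $\Tip(g)$ has no choice but to sit inside one of the $p$-blocks; hence $\Tip(g)$ divides $p$ as a subword, contradicting $p\in\Nontip(I)$.

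Having verified $f\Lambda e=0$, Proposition~\ref{prop:Lambda-a-Lambda} with $t=1$ supplies the desired isomorphism $\Lambda\simeq\Gamma\ltimes(\Gamma e\otimes_k f\Gamma)$ together with $\Hom_\Gamma(e\Gamma,f\Gamma)=0$, completing the forward direction. The main obstacle I expect to meet is the combinatorial step placing $\Tip(g)$ inside a single $p$-block: it uses critically that $a$ is a single arrow in $Q$ and that the chosen Gröbner basis avoids $a$ altogether, so the Gröbner reduction cannot ``straddle'' an occurrence of $a$ in the word $a(pa)^{k-1}p$.
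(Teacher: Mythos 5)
Your proof is correct and essentially follows the paper's argument: the backward direction is Corollary~\ref{cor:trivialextwithcond}(ii) with $t=1$, and the forward direction reduces to establishing $f\Lambda e=0$ via the $a$-avoiding Gr\"obner basis from Lemma~\ref{lem:groebneravoidingarrow}, then invokes Proposition~\ref{prop:Lambda-a-Lambda}. The only deviation is minor and arguably cleaner: having extracted $\Tip(g)\mid p$ from $(ap)^k\in I$, you contradict $p\in\Nontip(I)$ directly, whereas the paper normalises by choosing $p$ $\succ$-minimal with $ap\notin I$ and runs one explicit reduction step to conclude $ap\in I$.
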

\begin{proof}
  Assume that the arrow $a\colon v_e\lxr v_f$ in $Q$ does not occur in
  a set of minimal generators of $I$ in $kQ$.  Given the assumption on
  $a$, a minimal set of generators for a Gr\"obner
  basis $\G$ for the ideal $I$ (using length left-lexicographic
  ordering on the paths in $Q$) does not contain any elements in which
  the arrow $a$ appears by Lemma~\ref{lem:groebneravoidingarrow}.

  First we show that $\Hom_\Lambda(e\Lambda, f\Lambda) = (0)$.  We
  have that $\Hom_\Lambda(e\Lambda, f\Lambda) \simeq f\Lambda e$.
  Since $a$ is not occuring in set of minimal generators of $I$ in
  $kQ$, the multiplication map $f\Lambda e\lxr \overline{a}\Lambda e$
  given by left multiplication by $a$, is an isomorphism.  Hence,
  $f\Lambda e = 0$ if and only if $\overline{a}\Lambda e = 0$.

  Assume that $\overline{a}\Lambda e \neq 0$, that is, there is some
  path $p$ in $Q$ such that $\overline{ap}\neq 0$ and $p$ ends in
  $v_e$.  Since $I$ is an admissible ideal, we have that
  $(\overline{ap})^t = 0$ in $\Lambda$ or equivalently $(ap)^t$ is in
  $I$ for some $t\geqslant 1$.  In particular, reducing the element
  $(ap)^t$ modulo a minimal set of generators for a Gr\"obner basis
  for $I$ gives zero.  Choose $p$ minimal with the property that
  $ap\not\in I$.  Reducing $(ap)^t$ modulo $I$ means to substract
  elements of the form $p'gp''$ for some paths $p'$ and $p''$ in $Q$
  and $g$ in $\G$, where
\[\Tip((ap)^t) = (ap)^t = \Tip(p'gp'') = p'\Tip(g)p''.\]
Since $\Tip(g)$ does not contain $a$ by Lemma
\ref{lem:groebneravoidingarrow}, we must have that $\Tip(g)\mid
p$, where this means $\Tip(g)$ divides $p$, that is, $q\Tip(g)q' = p$
for some path $q$ and $q'$ in $Q$.  Hence we can write
\begin{align}
p & = q\Tip(g)q'\notag\\
p' & = (ap)^raq\notag\\
p'' & = q'(ap)^{t-r-1}.\notag
\end{align}
Then 
\begin{align}
(ap)^t - p'gp'' & = (ap)^ra(p -
qgq')(ap)^{t-r-1}\notag\\
& = (ap)^ra(p-\Tip(qgq') - \{\sum\textrm{smaller paths than
  $p$}\}(ap)^{t-r-1}\notag\\
& = - (ap)^ra\{\sum\textrm{smaller paths than
  $p$}\}(ap)^{t-r-1}.\notag
\end{align}
Since for all the paths $s$ that are smaller than $p$ the elements
$as$ are in $I$, it follows from the above that $ap$ is in $I$.  This
is a contradiction, so $\overline{a}\Lambda e = 0$.  Hence
$f\Lambda e = 0$ and $\Hom_\Lambda(e\Lambda, f\Lambda) = 0$. 

From the above and Proposition \ref{prop:Lambda-a-Lambda}
it follows that $\Lambda$ is isomorphic to the trivial extension
$\Gamma\ltimes P$, where
$\Gamma \simeq \Lambda/\Lambda\overline{a}\Lambda$ and
$P = \Gamma e\otimes_kf\Gamma$ with
$\Hom_\Gamma(e\Gamma, f\Gamma) = (0)$.

Conversely, assume that $\Lambda$ is isomorphic to the trivial
extension $\Gamma\ltimes P$, where
$\Gamma \simeq \Lambda/\Lambda\overline{a}\Lambda$ and
$P = \Gamma e\otimes_kf\Gamma$ with
$\Hom_\Gamma(e\Gamma, f\Gamma) = (0)$.  Using Corollary
\ref{cor:trivialextwithcond} the claim follows. 
\end{proof}

Let the setting be as above, $\Lambda = kQ/I$ be an admissible
quotient of the path algebra $kQ$ over a field $k$.  Suppose that
there are arrows $a_i\colon v_{e_i}\lxr v_{f_i}$ in $Q$ for
$i=1,2,\ldots,t$ which do not occur in a set of minimal generators of
$I$ in $kQ$ and $\Hom_\Lambda( e_i\Lambda, f_j\Lambda ) = 0$ for all
$i$ and $j$ in $\{1,2,\ldots,t\}$.  Let $\Gamma = \Lambda/\Lambda
\{\overline{a}_i\}_{i=1}^t \Lambda$.  Now we want to address the first
aim of this section, namely that arrow removal $\Lambda\lxr \Gamma$ is
a cleft extension satisfying the conditions in 
Theorem~\ref{thm:cleftextfindim}.  

In the above situation we have the functors
\begin{equation}
\label{digramwithendofunctors}
\xymatrix@C=0.5cm{
\smod\Gamma  \ar@(ul,dl)_{F} \ar[rrr]^{\mi = \Hom_\Gamma({_\Lambda\Gamma}_\Gamma,-)} &&& \smod\Lambda
\ar@(ul,ur)^{G} \ar[rrr]^{\me = \Hom_\Lambda({_\Gamma\Lambda}_\Lambda,-)} 
\ar @/_1.5pc/[lll]_{\mq = {-\otimes_\Lambda  {_\Lambda\Gamma}_\Gamma}}  \ar
 @/^1.5pc/[lll]^{\map = \Hom_\Lambda({_\Gamma\Gamma}_\Lambda,-)} &&& \smod\Gamma \ar@(ur,dr)^{F}
\ar @/_1.5pc/[lll]_{\ml = {-\otimes_\Gamma {_\Gamma\Lambda}_\Lambda}} \ar
 @/^1.5pc/[lll]^{\mr = \Hom_\Gamma({_\Lambda\Lambda}_\Gamma,-)} }
\end{equation}
where $F$ is given by the exact sequence
\[
\xymatrix{ 0 \ar[r] & F \ar[r] & \me\ml \ar[r] & \iden_{\smod\Gamma}}.
\]

\begin{prop} 
\label{propremoveiscleft}
Let $\Lambda = kQ/I$ be an admissible quotient of the path algebra
$kQ$ over a field $k$.  Suppose that there are arrows $a_i\colon v_{e_i}\lxr v_{f_i}$ in $Q$ for $i=1,2,\ldots,t$ which do not occur in a set of minimal generators of $I$ in $kQ$ and $\Hom_\Lambda(e_i\Lambda, f_j\Lambda ) = 0$ for all $i$ and $j$ in $\{1,2,\ldots,t\}$.  Let $\Gamma = \Lambda/\Lambda
\{\overline{a}_i\}_{i=1}^t \Lambda$.  Then the following assertions hold. 
\begin{enumerate}[\rm(i)]
\item $\me$ is faithful exact,
\item $(\ml,\me)$ is an adjoint pair of functors,
\item $\me\mi \simeq 1_{\smod\Gamma}$,
\item $\ml$ and $\mr$ are exact functors,
\item $\me$ preserves projectives, 
\item $\Image F \subseteq \proj(\Gamma)$. 
\item $F^2 = 0$. 
\end{enumerate}
In particular, 
\[\C=(\smod\Gamma, \smod\Lambda, \me = \Hom_\Lambda(
  {_\Gamma\Lambda_\Lambda}, -), \ml = -\otimes_\Gamma
  {_\Gamma\Lambda_\Lambda}, \mi =
  \Hom_\Gamma({_\Lambda\Gamma_\Gamma},-))\] is a cleft extension
$\smod\Lambda$ of $\smod\Gamma$ satisfying \eqref{assumptionscleftext} and 
\eqref{conditionsfortheImageFandG} with both bounds being zero, that is, $\Image F \subseteq \Proj(\Gamma)$ and $\Image G\subseteq \Proj(\Lambda)$.
\end{prop}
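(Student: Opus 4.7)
The plan is to exploit the trivial extension description $\Lambda \cong \Gamma \ltimes P$ with $P = \bigoplus_{i=1}^t \Gamma e_i \otimes_k f_i\Gamma$ established in Proposition~\ref{prop:Lambda-a-Lambda}. The key observation is that this gives a direct sum decomposition $\Lambda \cong \Gamma \oplus P$ as a $\Gamma$-$\Gamma$-bimodule (via the algebra inclusion $\nu$ and the ideal $\Lambda\{\overline{a}_i\}\Lambda$). Under this decomposition, each summand $\Gamma e_i \otimes_k f_i\Gamma$ is a free right $\Gamma$-module on $\dim_k(\Gamma e_i)$ copies of $f_i\Gamma$, and dually free on the left. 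Hence $\Lambda$ is \emph{projective on both sides as a $\Gamma$-module}, and the functors $\me$ and $\mi$ are simply restriction along $\nu\colon\Gamma\hookrightarrow\Lambda$ and inflation along $\pi\colon\Lambda\twoheadrightarrow\Gamma$, respectively, with $\pi\nu = \id_\Gamma$.

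From these structural facts, statements (i)--(v) follow quickly. For (i), restriction of scalars along a ring homomorphism is always faithful exact. For (ii), $\ml = -\otimes_\Gamma \Lambda$ is the classical left adjoint to restriction. For (iii), $\me\mi$ corresponds to restriction followed by inflation along $\pi\nu = \id_\Gamma$. For (iv), $\ml = -\otimes_\Gamma\Lambda$ and $\mr = \Hom_\Gamma(\Lambda_\Gamma,-)$ are exact precisely because $\Lambda$ is projective as a left (resp.\ right) $\Gamma$-module, which we have just argued. For (v), since $(\me,\mr)$ is an adjoint pair and $\mr$ is exact, the left adjoint $\me$ preserves projectives by the standard adjunction argument.

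For (vi), one computes $\me\ml(N) = N\otimes_\Gamma \Lambda \cong N \oplus (N\otimes_\Gamma P)$ as right $\Gamma$-modules, and the split surjection $\me\ml(N) \to N$ is precisely the projection onto the first summand. Hence $F(N) \cong N\otimes_\Gamma P = \bigoplus_{i=1}^t (Ne_i)\otimes_k f_i\Gamma$. Since each $Ne_i$ is a finite-dimensional $k$-vector space, $F(N)$ is a finite direct sum of copies of the projective modules $f_i\Gamma$, and therefore lies in $\proj(\Gamma)$. For (vii), apply $F$ again: since $(F(N))e_j = \bigoplus_i (Ne_i)\otimes_k f_i\Gamma e_j$ and the hypothesis $\Hom_\Gamma(e_i\Gamma,f_j\Gamma) = 0$ gives $f_i\Gamma e_j = 0$ for all $i,j$ (cf.\ Proposition~\ref{prop:Lambda-a-Lambda}(i)), we conclude that $F(N)e_j = 0$ for all $j$, and hence $F^2(N) = \bigoplus_j (F(N)e_j)\otimes_k f_j\Gamma = 0$.

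The main (minor) obstacle is bookkeeping: making sure the various bimodule structures on $\Lambda$ and $\Gamma$ as functors between module categories are correctly identified, particularly distinguishing restriction along $\nu$ from inflation along $\pi$, and verifying that the unit $\nu_N\colon N \to \me\ml(N)$ of the adjunction $(\ml,\me)$ really corresponds to the canonical inclusion of the summand $N$ into $N\oplus(N\otimes_\Gamma P)$, so that the kernel computation defining $F$ gives the expected answer. Once this is tracked carefully, every verification reduces to a direct consequence of the bimodule splitting $\Lambda \cong \Gamma\oplus P$ and the orthogonality $f_i\Gamma e_j = 0$. The final sentence of the proposition, identifying the data as a cleft extension satisfying \eqref{assumptionscleftext} and \eqref{conditionsfortheImageFandG} with both bounds zero, then follows by collecting (i)--(vii) and noting $\Image G \subseteq \Proj\Lambda$ by Lemma~\ref{lem:FGequivalence} applied with $n=2$ (since $F^2 = 0$ and $F(N) \in \Proj\Gamma$).
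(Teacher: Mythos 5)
Your proposal is correct and follows essentially the same route as the paper: both exploit the bimodule splitting $\Lambda \cong \Gamma \oplus P$ (equivalently, the split exact sequence \eqref{eq:exact-eta} together with Proposition~\ref{prop:Lambda-a-Lambda}(ii)) to get projectivity of $\Lambda$ on both sides, identify $F \simeq -\otimes_\Gamma P$, and then use the orthogonality $f_j\Gamma e_i = 0$ for (vi)--(vii), finishing with Lemma~\ref{lem:FGequivalence} to deduce $\Image G \subseteq \Proj(\Lambda)$. The only cosmetic difference is that you verify $F^2=0$ by showing $F(N)e_j = 0$, while the paper computes directly with the factorization $\Lambda\overline{a}_j\Lambda \otimes_\Gamma \Lambda\overline{a}_i\Lambda \ni \overline{a}_j(f_j\Gamma e_i)\overline{a}_i$; these are equivalent.
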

\begin{proof}
(i) The functor $\me\colon \smod\Lambda\lxr \smod\Gamma$ is faithful
exact, since it is given by the restriction along the algebra
inclusion $\Gamma\lxr \Lambda$.

(ii) This is immediate from the definitions of the functors $\ml$ and
$\me$. 

(iii) Since the composition of the algebra homomorphisms $\nu\colon
\Gamma \lxr \Lambda$ and $\pi\colon \Lambda\lxr \Gamma$ is
the identity on $\Gamma$, it follows that $\me \mi \simeq
\iden_{\smod\Gamma}$. 

(iv) By the split exact sequence \eqref{eq:exact-eta} as left
$\Gamma$-modules, we have the isomorphism $_\Gamma\Lambda\simeq {_\Gamma\Gamma}\oplus {_\Gamma\Lambda\{\overline{a}_i\}_{i=1}^t\Lambda}$.  By Proposition~\ref{prop:Lambda-a-Lambda} (ii) we have that 
\[{_\Gamma\Lambda\{\overline{a}_i\}_{i=1}^t\Lambda} \simeq
\oplus_{i=1}^t {_\Gamma\Gamma e_i}\otimes_k f_i\Gamma \simeq
\oplus_{i=1}^t \Gamma e_i^{\dim_kf_i\Gamma},
\]
so that $_\Gamma\Lambda$ is a projective left $\Gamma$-module.  Since
\[\ml= - \otimes_\Gamma {_\Gamma\Lambda}_\Lambda\colon \smod\Gamma\lxr
\smod\Lambda,\]
the functor $\ml$ is exact.  Using similar arguments as above we
  show that $\Lambda_\Gamma$ is a projective $\Gamma$-module, hence
  $\mr = \Hom_\Gamma( {_\Lambda\Lambda_\Gamma},-)$ is an exact functor.

(v) Since $(\me,\mr)$ is an adjoint pair and $\mr$ is exact by (iv), it follows that $\me$ preserves projectives.

(vi) Since $\me$ and $\ml$ commutes with finite direct sums and they
are exact, we infer that $F$ also commutes with finite direct sums and
is exact.  By Watt's theorem
$F \simeq -\otimes_\Gamma F(\Gamma)\colon \smod\Gamma\lxr \smod\Gamma$,
where $F(\Gamma) \simeq \Lambda \{\overline{a}_i\}_{i=1}^t \Lambda$ as
$\Gamma$-$\Gamma$-bimodules.   We have that 
\[F(B) = B\otimes_\Gamma {_\Gamma\Lambda}
  \{\overline{a}_i\}_{i=1}^t\Lambda_\Gamma\simeq
  \oplus_{i=1}^tB\otimes_\gamma {_\Gamma\Gamma}e_i \otimes_k
  f_i\Gamma_\Gamma\simeq \oplus_{i=1}^tBe_i\otimes_kf_i\Gamma\] by
Proposition \ref{prop:Lambda-a-Lambda} (ii).  The claim follows from
this.

(vii) We have that 
\[F^2(B) = B\otimes_\Gamma \Lambda\{\overline{a}_i\}_{i=1}^t\Lambda\otimes_\Gamma \Lambda
\{\overline{a}_i\}_{i=1}^t\Lambda.\] 
Since $\overline{a}_j\Lambda = \overline{a}_j\Gamma =
\overline{a}_jf_j\Gamma$ and 
$\Lambda \overline{a}_i = \Gamma\overline{a}_i = \Gamma
e_i\overline{a}_i$, we have that 
\[B\otimes_\Gamma \Lambda\overline{a}_j\Lambda\otimes_\Gamma
  \Lambda\overline{a}_i\Lambda = 
B\otimes_\Gamma \Lambda\overline{a}_jf_j\Gamma\otimes_\Gamma
  \Gamma e_i\overline{a}_i\Lambda =
B\otimes_\Gamma \Lambda\overline{a}_j\underbrace{f_j\Gamma e_i}_{=0}\otimes_\Gamma
  \overline{a}_i\Lambda.\]

From this we infer that $F^2(B) = 0$ as $f_j\Gamma e_i = 0$ by
  Proposition \ref{prop:Lambda-a-Lambda} (i) for all $i$ and $j$ in $\{1,2,\ldots, t\}$.

For the final claim, (i)--(iii) show that $\C$ is a cleft extension. Conditions (iv) and (v) show that \eqref{assumptionscleftext} holds. 

By (vi) we have that $\Image F \subseteq \Proj(\Gamma)$.  Therefore
applying (vii) and Lemma~\ref{lem:FGequivalence} we have
$\Image G \subseteq \Proj(\Lambda)$. This shows that \eqref{conditionsfortheImageFandG} holds with 
$n_{\smod\Gamma} = n_{\smod\Lambda} = 0$. 
\end{proof}

We end this section with the following consequence of the previous
result and Theorem \ref{thm:cleftextfindim}. This is Theorem~A (ii) presented in the Introduction.

\begin{thm}\label{thm:summarycleftext}
Let $\Lambda = kQ/I$ be an admissible quotient of the path algebra
  $kQ$ over a field $k$.  Suppose that there are arrows $a_i\colon
  v_{e_i}\lxr v_{f_i}$ in $Q$ for $i=1,2,\ldots,t$ which do not occur
  in a set of minimal generators of $I$ in $kQ$ and $\Hom_\Lambda(
  e_i\Lambda, f_j\Lambda ) = 0$ for all $i$ and $j$ in
  $\{1,2,\ldots,t\}$.  Let $\Gamma = \Lambda/\Lambda
  \{\overline{a}_i\}_{i=1}^t \Lambda$.
\begin{enumerate}[\rm(i)]
\item $\findim\Lambda\leq \maxx\big\{\findim\Gamma, 1 \big\}$.
\item $\findim\Gamma\leq \maxx\big\{\findim\Lambda, 1 \big\}$.
\end{enumerate} 
In particular, 
\[
\findim\Lambda < \infty \text{\ if and only if\ } \findim\Gamma < \infty.
\]
\end{thm}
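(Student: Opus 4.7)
The plan is to combine the two main prior results of this subsection, Proposition~\ref{propremoveiscleft} and Theorem~\ref{thm:cleftextfindim}, so that the desired inequalities drop out essentially for free.

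First I would invoke Proposition~\ref{propremoveiscleft} applied to the given data $(\Lambda,\Gamma,\{a_i\}_{i=1}^t)$. Under the standing hypotheses (no $a_i$ occurs in a minimal generating set of $I$, and $\Hom_\Lambda(e_i\Lambda,f_j\Lambda)=0$ for all $i,j$), that proposition outputs exactly what we need: the diagram $(\smod\Gamma,\smod\Lambda,\me,\ml,\mi)$ of $(\ref{digramwithendofunctors})$ is a cleft extension satisfying condition $(\ref{assumptionscleftext})$, and furthermore the image conditions $(\ref{conditionsfortheImageFandG})$ hold with the sharp bounds $n_{\smod\Gamma}=n_{\smod\Lambda}=0$, because $\Image F\subseteq \Proj\Gamma$ and $\Image G\subseteq \Proj\Lambda$.

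With these hypotheses in hand, Theorem~\ref{thm:cleftextfindim} applies directly with $\A=\smod\Lambda$ and $\B=\smod\Gamma$. Using the identifications $\Findim\smod\Lambda=\findim\Lambda$ and $\Findim\smod\Gamma=\findim\Gamma$, and observing that the quantities $n_{\A}+1$ and $n_{\B}+1$ both collapse to $1$, the two inequalities produced by the theorem become precisely (i) and (ii). The final biconditional $\findim\Lambda<\infty\iff\findim\Gamma<\infty$ is then an immediate consequence of (i) and (ii).

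There is no substantive obstacle, since the hard work has been absorbed into the two preceding results. The only thing to be careful about is the bookkeeping: matching the roles of $\A$ and $\B$ in Theorem~\ref{thm:cleftextfindim} with $\smod\Lambda$ and $\smod\Gamma$ as produced by Proposition~\ref{propremoveiscleft}, and verifying that the bounds indeed collapse to $1$ because the images of $F$ and $G$ land in projectives. Once this identification is in place, both inequalities and the finiteness equivalence follow mechanically.
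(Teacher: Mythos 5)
Your proposal is correct and follows exactly the same route as the paper: the theorem is stated there as an immediate consequence of Proposition~\ref{propremoveiscleft} (which establishes the cleft extension structure with $\Image F\subseteq\Proj\Gamma$, $\Image G\subseteq\Proj\Lambda$, hence $n_{\smod\Gamma}=n_{\smod\Lambda}=0$) together with Theorem~\ref{thm:cleftextfindim} applied with $\A=\smod\Lambda$ and $\B=\smod\Gamma$. The role-matching and bound-collapse you describe are precisely the bookkeeping the paper intends.
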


\begin{rem}
The arrow removal operation has also been considered by Diracca and Koenig in \cite{DK}. They considered the notion of an exact split pair $(\mi, \me)$, i.e.\ a pair of exact functors $\mi\colon \B\lxr \A$ and $\me\colon \A\lxr \B$ between abelian categories such that the composition $\me\mi$ is an auto-equivalence of $\B$. Up to this auto-equivalence, $\me$ being faithful and the existence of a left adjoint of $\me$, this is a cleft extension as defined in Definition~\ref{defncleftext}. Arrow removal of an arrow $a$ which only occurs in monomial relations gives rise to an exact split pair, see \cite[Proposition 5.4 (b)]{DK}. We now show that the induced cleft extension of such an arrow removal does not in general satisfy the conditions of Theorem~\ref{thm:cleftextfindim}. For example, consider the algebra $\Lambda=
k\left(\xymatrix{1\ar@(ul,dl)_a  }\right)/\langle a^2 \rangle$. The abelian category $\smod\Lambda$ is a cleft extension of $\smod{k}$, in particular, we have the following diagram:
\[
\xymatrix@C=0.5cm{
\smod{k}  \ar@(ul,dl)_{F} \ar[rrr]^{\mi = \Hom_k({_\Lambda k}_k,-)} &&& \smod\Lambda
\ar@(ul,ur)^{G} \ar[rrr]^{\me = \Hom_\Lambda({_k\Lambda}_\Lambda,-)} 
\ar @/_1.5pc/[lll]_{\mq = {-\otimes_\Lambda  {_\Lambda k}_k}}  \ar
 @/^1.5pc/[lll]^{\map = \Hom_\Lambda({_kk}_\Lambda,-)} &&& \smod{k} \ar@(ur,dr)^{F}
\ar @/_1.5pc/[lll]_{\ml = {-\otimes_k {_k\Lambda}_\Lambda}} \ar
 @/^1.5pc/[lll]^{\mr = \Hom_k({_\Lambda\Lambda}_k,-)} }
\]
The right hand side is induced from the natural inclusion $k\lxr \Lambda$. Then, the functor $\ml$ is exact, the functor $\mr$ is exact and therefore the functor $\me$ preserves projectives. Moreover, since $\smod{k}$ is a semisimple category the image of the endofunctor $F$ is projective.
Let us now compute the endofunctor $G$. Let $X$ be a $\Lambda$-module. From the exact sequence \eqref{secondexactsequence} we have the map 
\[
\ml\me(X)=X|_k\otimes_k\Lambda_{\Lambda} \lxr X \lxr 0, \ x\otimes \lambda\mapsto x\lambda
\]
and the endofunctor $G$ is the kernel. Clearly, $X|_k\otimes_k\Lambda_{\Lambda}$ is a finitely generated projective $\Lambda$-module. Thus, $G(X)$ is the first syzygy of $X$ plus some projective. This shows that for all $X$ in $\smod\Lambda$ the first syzygy of $X$ is a direct summand of $G(X)$ and therefore $G(X)$ is not projective. Hence, the second part of condition \eqref{conditionsfortheImageFandG} is not satisfied.
\end{rem}

\section{Vertex removal and finitistic dimension}
\label{sectionvertexremoval}

This section is devoted to discussing reduction techniques for
finitistic dimension in abelian categories with enough projectives
occurring in recollement situations.  This is done in four
situations, two of which have occured in the literature already
(see \cite[Corollary 4.21 1)]{FosGriRei}, \cite[Proposition 2.1]{Ful-Sao:FinDimConjArtRings}) and two are new.  These are applied to finite dimensional algebras.  

\subsection{Triangular reduction}
A form of triangular reduction was considered by Happel in \cite{Happel} via recollements: 

\begin{thm}
\label{thmHappel}
If a finite dimensional algebra $\Lambda$ occur in a recollement of bounded
derived categories like
\[
\xymatrix@C=0.5cm{
\mathsf{D}^{\mathsf{b}}(\smod\Lambda'') \ar[rrr]^{\mi} &&& \mathsf{D}^{\mathsf{b}}(\smod\Lambda) \ar[rrr]^{\me}  \ar @/_1.5pc/[lll]_{\mathsf{\mq}}  \ar
 @/^1.5pc/[lll]^{\mp} &&& \mathsf{D}^{\mathsf{b}}(\smod\Lambda') 
\ar @/_1.5pc/[lll]_{\ml} \ar
 @/^1.5pc/[lll]^{\mr}
 } 
\]
then $\findim\Lambda<\infty$ if and only if $\findim\Lambda''<\infty$ and $\findim\Lambda'<\infty$.  
\end{thm}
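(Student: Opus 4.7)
The plan is to exploit the canonical triangles of the recollement,
\[
\ml\me(X)\lxr X\lxr \mi\mq(X)\lxr \ml\me(X)[1],
\]
to relate the projective dimension of a $\Lambda$-module $M$, viewed as a stalk complex in $\mathsf{D}^{\mathsf{b}}(\smod\Lambda)$, to the projective dimensions of its images on the two sides. Reading off this triangle and using that in any distinguished triangle $A\lxr B\lxr C\lxr A[1]$ the projective dimension of $B$ is controlled by those of $A$ and $C$, the task is reduced to uniformly bounding $\pd_\Lambda\ml\me(M)$ by $\findim\Lambda'$ and $\pd_\Lambda\mi\mq(M)$ by $\findim\Lambda''$, up to constants depending only on the recollement.

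The central technical input, which I would establish first, is Happel's amplitude lemma: for any triangle functor $F\colon \mathsf{D}^{\mathsf{b}}(\smod A)\lxr \mathsf{D}^{\mathsf{b}}(\smod B)$ between bounded derived categories of finite-dimensional algebras there exists an integer $n$ such that $F(\smod A)\subseteq \mathsf{D}^{[-n,n]}(\smod B)$. The idea is that $A$ has only finitely many simple modules $S_1,\ldots,S_r$, each $F(S_i)$ is a bounded complex, and $n$ can be chosen to bound the cohomology ranges of all the $F(S_i)$ simultaneously. For a general $A$-module $M$ of composition length $\ell$, one inducts on $\ell$ via short exact sequences $0\lxr M'\lxr M\lxr S_j\lxr 0$: the cohomology range of the middle term of a distinguished triangle is contained in the union of those of the outer terms, and so $F(M)\in \mathsf{D}^{[-n,n]}(\smod B)$. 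Applying this to each of $\mi, \me, \ml, \mr, \mq, \mp$ yields a uniform amplitude constant $N$.

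For the direction $(\Leftarrow)$, given a $\Lambda$-module $M$ with a finite projective resolution $P^\bullet\to M$ of length $d=\pd_\Lambda M$, splicing into short exact sequences and applying $\me$ to the resulting triangles presents $\me(M)$ as an iterated extension of the objects $\me(P_i)$, each of cohomological amplitude at most $N$. A dimension-shift argument, together with the adjoint structure $(\ml,\me,\mr)$ and $\Ker\me = \Image\mi$, shows that each cohomology module of $\me(M)$ has finite projective dimension over $\Lambda'$, hence by hypothesis at most $\findim\Lambda'$. This bounds $\pd_{\Lambda'}\me(M)$, and the amplitude of $\ml$ then bounds $\pd_\Lambda\ml\me(M)$. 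A parallel argument with $\mq$ in place of $\me$ bounds $\pd_\Lambda\mi\mq(M)$ in terms of $\findim\Lambda''$, and the triangle inequality combines these into a uniform bound on $\pd_\Lambda M$. For the converse $(\Rightarrow)$, given a $\Lambda'$-module $N'$ of finite projective dimension, the isomorphism $\me\ml\simeq\iden$ gives $\pd_{\Lambda'}N' = \pd_{\Lambda'}\me\ml(N')$, which is controlled by $\pd_\Lambda\ml(N')$ and the amplitude $N$ of $\me$; finiteness of $\pd_\Lambda\ml(N')$ follows from $\ml$ preserving the subcategory of perfect complexes (as the left adjoint of $\me$), so it is bounded by $\findim\Lambda$ plus $N$. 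The $\Lambda''$-case is entirely symmetric via $\mi$ and $\mq\mi\simeq\iden$.

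The main obstacle is the step asserting that whenever $M$ has finite projective dimension over $\Lambda$, the cohomology modules of $\me(M)$ have finite projective dimension over $\Lambda'$, and similarly for $\mq(M)$ over $\Lambda''$. This amounts to verifying that the triangle functors in the recollement respect the thick subcategories $\mathsf{K}^{\mathsf{b}}(\proj -)$ of perfect complexes inside each bounded derived category; it follows from the existence of the appropriate adjoints, but must be argued carefully using compactness in the underlying unbounded derived categories. Once this compatibility is secured, the remaining bookkeeping with the amplitude lemma and the recollement triangles is routine.
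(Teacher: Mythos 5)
The paper does not give a proof of this statement. It is Theorem~\ref{thmHappel}, which the authors quote verbatim from Happel's 1993 paper \cite{Happel} as background motivation for the triangular‑reduction subsection; the only subsequent use is as a black box (for instance in the homological‑heart argument of Section~\ref{sectionvertexremoval}). So there is no in‑paper proof to compare against, and I can only assess your sketch on its own.

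Your overall strategy — the amplitude lemma, the canonical triangle $\ml\me(M)\lxr M\lxr\mi\mq(M)\lxr$, and reducing to the two outer algebras — is the right framework and matches Happel's general philosophy. However, the step you yourself flag as ``the main obstacle'' is not merely a routine verification: it is the actual content of the theorem, and your proposed justification is incorrect as phrased. You assert that each recollement functor preserves $\mathsf{K}^{\mathsf{b}}(\proj -)$ and that this ``follows from the existence of the appropriate adjoints, \ldots using compactness in the underlying unbounded derived categories.'' Two problems. First, the hypothesis only gives a recollement of \emph{bounded} derived categories $\mathsf{D}^{\mathsf{b}}(\smod -)$; lifting it to a recollement of $\mathsf{D}(\Mod -)$ (where compact $=$ perfect and left adjoints preserve compacts because their right adjoints preserve coproducts) is itself a nontrivial theorem and is not something you can assume for free. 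Second, even granting such a lift, the compactness argument only forces the \emph{left} adjoints $\ml$ and $\mq$ to preserve perfects; it says nothing about $\me$ or $\mi$. Yet in your $(\Leftarrow)$ direction you need exactly that: you want the cohomology modules of $\me(M)$ to have finite projective dimension over $\Lambda'$ when $M$ is perfect over $\Lambda$, and you want $\mi\mq(M)$ to be perfect over $\Lambda$. Neither follows from the left‑adjoint argument, and both fail for general triangle functors with adjoints; what Happel really uses is the structure theory of such recollements (König's classification via exceptional objects / homological epimorphisms for finite‑dimensional algebras), which is where the preservation of perfects is actually established.

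A smaller point, but still a gap: in the $(\Rightarrow)$ direction you write that $\pd_{\Lambda'}\me\ml(N')$ ``is controlled by $\pd_\Lambda\ml(N')$ and the amplitude $N$ of $\me$.'' Amplitude bounds cohomological degree, not projective dimension; to turn $\ml(N')\in\mathsf{K}^{\mathsf{b}}(\proj\Lambda)$ into a bound on $\pd_{\Lambda'}\me\ml(N')$ you again need $\me$ to carry $\proj\Lambda$ into perfect $\Lambda'$-complexes, i.e.\ $\me(\Lambda)$ perfect over $\Lambda'$, which is precisely the unproved preservation statement. So the routine bookkeeping sits on top of a foundation you have identified but not laid. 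If you want to fill it in, the place to look is König's \emph{Tilting complexes, perpendicular categories and recollements of derived module categories of rings} \cite{Koenig}, together with the lifting results of Angeleri H\"ugel--K\"onig--Liu; only after that structure theory is invoked does the amplitude-plus-triangle argument close.
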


When such a recollement exists is characterized in \cite{Koenig}, but it seems hard to apply.  A reduction formula for the finitistic dimension of
triangular matrix rings is given by the following classical result due
to Fossum, Griffith and Reiten. 

\begin{thm}[\protect{\!\!\!\cite[Corollary 4.21)]{FosGriRei}}]
\label{thmFGR}
  Let
  $\Lambda =\left(\begin{smallmatrix} R & 0\\ M &
      S\end{smallmatrix}\right)$ for rings $R$ and $S$ and a non-zero
  $S$-$R$-bimodule $M$.  Then 
\[
\findim R\leq \findim \Lambda \leq 1 + \findim R  + \findim S.
\]
\end{thm}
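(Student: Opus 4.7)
The plan is to translate everything into the standard module-theoretic description of $\Mod\Lambda$: a right $\Lambda$-module corresponds, via the idempotents $e_1 = \mathrm{diag}(1,0)$ and $e_2 = \mathrm{diag}(0,1)$, to a triple $(X, Y, \phi)$ where $X = Ze_1 \in \Mod R$, $Y \in \Mod S$, and $\phi\colon Y \otimes_S M \to X$ is $R$-linear (it records the action of the off-diagonal block). The indecomposable projectives are $e_1\Lambda \simeq (R,0,0)$ and $e_2\Lambda \simeq (M, S, \iden)$, so every projective $\Lambda$-module has the form $(P \oplus Q\otimes_S M,\ Q,\ \iden)$ with $P \in \Proj R$ and $Q \in \Proj S$. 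Restriction along the quotient $\Lambda \twoheadrightarrow \Lambda/\Lambda e_2\Lambda \simeq R$ yields an exact fully faithful functor $\iota_R\colon \Mod R \to \Mod\Lambda$, $X \mapsto (X,0,0)$, and similarly $\iota_S\colon \Mod S \to \Mod\Lambda$, $Y \mapsto (0,Y,0)$. Because $Ze_1$ is always a $\Lambda$-submodule of $Z$, one obtains a natural short exact sequence
\[
0 \to \iota_R(X) \to Z \to \iota_S(Y) \to 0. \tag{$\ast$}
\]

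For the lower bound $\findim R \le \findim\Lambda$, I would first observe that a projective $\Lambda$-module $(P \oplus Q\otimes_S M, Q, \iden)$ surjects onto $(X,0,0)$ only when $Q$ maps to $0$, so every projective cover of $\iota_R(X)$ is of the form $\iota_R(P)$ with $P\twoheadrightarrow X$ the $R$-projective cover. Iterating, $\iota_R$ carries the minimal $R$-projective resolution of $X$ to the minimal $\Lambda$-projective resolution of $\iota_R(X)$, so $\pd_\Lambda \iota_R(X) = \pd_R X$ for every $X$. Picking $X$ with $\pd_R X = \findim R$ then exhibits a $\Lambda$-module of finite projective dimension equal to $\findim R$.

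For the upper bound, fix $Z = (X, Y, \phi)$ with $\pd_\Lambda Z < \infty$ and use $(\ast)$ to reduce to bounding $\pd_\Lambda \iota_R(X)$ and $\pd_\Lambda \iota_S(Y)$. The first equals $\pd_R X$, and I would show $\pd_R X < \infty$ by applying the exact functor $(-)e_1$ to a finite projective $\Lambda$-resolution of $Z$: this produces a finite exact sequence of $R$-modules whose terms are direct sums of $R$ and $M$, which may be further resolved over $R$, and the bookkeeping gives $\pd_R X \le \findim R$. For the second, I would construct a projective $\Lambda$-resolution of $\iota_S(Y)$ from a projective $S$-resolution $P_\bullet \to Y$: lift $P_i$ to the projective $\Lambda$-module $P_i \otimes_S e_2\Lambda = (P_i \otimes_S M, P_i, \iden)$, and at each step adjoin an $R$-projective cover of the residual $R$-component of the syzygy (arising because the square of tensoring with $e_2\Lambda$ is not exact). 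Running this until the $S$-side terminates after $\pd_S Y$ steps and the $R$-side tail terminates after at most $\findim R$ further steps gives $\pd_\Lambda \iota_S(Y) \le 1 + \pd_S Y + \findim R \le 1 + \findim S + \findim R$, and $(\ast)$ combined with the identity $\pd_\Lambda \iota_R(X) = \pd_R X \le \findim R$ yields the stated bound.

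The main obstacle is precisely the control of the residual $R$-components in the projective resolution of $\iota_S(Y)$. The functor $-\otimes_S e_2\Lambda\colon \Mod S \to \Mod\Lambda$ fails to be exact unless $M$ is flat over $S$, so the naive lift of a projective $S$-resolution of $Y$ is only a complex of $\Lambda$-projectives; at each stage a non-trivial $R$-piece appears in the kernel that must be killed by adjoining an $R$-projective cover. It is finiteness of $\findim R$ that guarantees this correction procedure terminates, and the single additional step needed to initialize the construction (essentially the short exact sequence $0 \to \iota_R(Y \otimes_S M) \to (Y \otimes_S M,\, Y,\, \iden) \to \iota_S(Y) \to 0$) accounts for the extra "$+1$" in the bound. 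The same difficulty, in a milder form, underlies the auxiliary step that $\pd_\Lambda Z < \infty$ forces $\pd_R X < \infty$.
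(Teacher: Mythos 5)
The paper itself does not prove this statement---it is quoted from Fossum--Griffith--Reiten \cite{FosGriRei}---so there is no internal proof to compare against; I assess your argument on its own.

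Your lower bound is sound: $\iota_R$ is exact and preserves projectives, and (as you observe via projective covers in the Artin-algebra setting) the minimal $\Lambda$-projective resolution of $\iota_R(X)$ has all its terms in the image of $\iota_R$, giving $\pd_\Lambda\iota_R(X)=\pd_R X$ and hence $\findim R\leq\findim\Lambda$.

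The upper bound has a genuine gap. You cannot reduce, via the short exact sequence $(\ast)$, to bounding $\pd_\Lambda\iota_R(X)$ and $\pd_\Lambda\iota_S(Y)$ separately, because both can be infinite while $\pd_\Lambda Z$ is finite. Concretely, take $Z=e_2\Lambda=(M,S,\iden)$: then $\pd_\Lambda Z=0$, while $X=Ze_1=M$ and $\pd_\Lambda\iota_R(X)=\pd_R M$, which need not be finite (nothing in the hypothesis constrains $\pd_R M$). In the same example $\pd_\Lambda\iota_S(S)=1+\pd_R M$ is also infinite, so $(\ast)$ gives no information. The same obstruction kills the auxiliary claim that $\pd_\Lambda Z<\infty$ forces $\pd_R(Ze_1)<\infty$: applying $(-)e_1$ to a finite $\Lambda$-projective resolution yields a finite exact complex of $R$-modules whose terms are direct summands of copies of $R\oplus M$, and if $\pd_R M=\infty$ this gives no finiteness conclusion. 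It also undermines your construction of a $\Lambda$-resolution of $\iota_S(Y)$: the ``residual $R$-components'' you must kill at each stage are built from $M$ and from $\Tor^S_*(Y,M)$, and there is no reason for these to have finite projective dimension over $R$, so the assertion that the $R$-side ``terminates after at most $\findim R$ further steps'' is unjustified without first knowing $\pd_\Lambda\iota_S(Y)<\infty$, which is circular.

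The correct route (in the Artin-algebra setting) does not decompose $Z$ by $(\ast)$. Instead, run a minimal projective $\Lambda$-resolution of $Z$ and observe that, since $(-)e_2$ is exact, preserves projectives, and preserves projective covers---a short radical computation shows $\rad\bigl((P\oplus Q\otimes_S M,\,Q,\,\iden)\bigr)e_2=\rad Q$---its $e_2$-part is the minimal projective $S$-resolution of $Y=Ze_2$. In particular $\pd_\Lambda Z<\infty$ forces $\pd_S Y<\infty$, and the $e_2$-part of the resolution vanishes from degree $\pd_S Y+1$ on, so $\Omega^{\,\pd_S Y+1}_\Lambda Z$ lies in the image of $\iota_R$. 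Its projective dimension over $\Lambda$ equals its projective dimension over $R$ (your lower-bound computation), which is finite and hence at most $\findim R$. This gives $\pd_\Lambda Z\leq (\pd_S Y+1)+\findim R\leq 1+\findim S+\findim R$.
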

 
Let $\Lambda$ be as in Theorem~\ref{thmFGR}, and let
$e = \left(\begin{smallmatrix} 1 & 0 \\ 0 & 0\end{smallmatrix}\right)$
an idempotent element in $\Lambda$.  The triangular ring gives rise to
a recollement situation as in Example \ref{examrecolmodcat} with
$(1 - e)\Lambda(1-e) \simeq \Lambda/\Lambda e \Lambda$.  We apply this
reduction technique to finite dimensional quotients $kQ/I$ of path
algebras to show that $Q$ is path connected if no triangular reduction
is possible (first proved in \cite{GreenMarcos}).

Given an idempotent $e$ in a finite dimensional algebra $\Lambda$ we
can view $\Lambda$ as the matrix ring
\[\left( \begin{matrix} e\Lambda e & e\Lambda (1 - e) \\
      (1 -e)\Lambda e & (1 - e)\Lambda(1 - e)\end{matrix}\right).\] If
$e\Lambda (1 - e)$ (or $(1 - e)\Lambda e$) equals zero for any
idempotent $e\neq 0, 1$, then we say that $\Lambda$ has a
\emph{triangular structure}. Furthermore, an algebra $\Lambda$ is
  said to be \emph{triangular reduced} if $\Lambda$ has no non-trivial
  triangular structures.  We use Theorem~\ref{thmFGR} to reprove the following result from \cite{GreenMarcos}: 
If a finite dimensional algebra $\Lambda = kQ/I$ has no non-trivial
triangular structure, then the quiver $Q$ is path connected.

Let $\Lambda = kQ/I$ be an admissible quotient of the path algebra
$kQ$ over a field $k$, and let $e$ be a sum of vertices in $Q$ with
$e\neq 0,1$.  Assume that $e\Lambda (1 - e) =(0)$.  If $(1 - e)\Lambda
e = (0)$ instead, interchange the role of $e$ and $1 - e$.  Then 
\[\Lambda\simeq \left(\begin{smallmatrix} e\Lambda e & 0\\
      (1 - e)\Lambda e & (1 - e)\Lambda
      (1-e)\end{smallmatrix}\right).\] 
Then $\findim \Lambda$ is finite if $\findim e\Lambda e$ and
$\findim(1 - e)\Lambda (1 - e)$ are finite. Next, a triangular reduced
admissible quotient $\Lambda = kQ/I$ of a path algebra is shown to
have a path connected quiver $Q$.

\begin{prop}[\protect{\!\cite{GreenMarcos}}]
Let $\Lambda = kQ/I$ be an admissible quotient of the path algebra
$kQ$ over a field $k$. Assume that $\Lambda$ is triangular reduced. Then $Q$ is path connected.
\end{prop}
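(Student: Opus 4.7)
The plan is to prove the contrapositive: if $Q$ is not path connected, then $\Lambda$ admits a non-trivial triangular structure, contradicting the hypothesis. The relevant notion of path-connectedness here, matched to the triangular reduction setup, should be that for every partition $V(Q) = V_1 \sqcup V_2$ into nonempty subsets, there exist directed paths in $Q$ both from $V_1$ to $V_2$ and from $V_2$ to $V_1$; the failure of this is precisely what permits a triangular block decomposition of $\Lambda$.

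First I would fix such a partition $V(Q)=V_1\sqcup V_2$ witnessing failure of path-connectedness; after possibly swapping $V_1$ and $V_2$, one may assume there is no directed path in $Q$ starting at a vertex of $V_1$ and ending at a vertex of $V_2$. I then set $e=\sum_{v\in V_1} e_v$, where $e_v$ is the trivial path at $v$. Since $V_1$ and $V_2$ are both nonempty, $e$ is an idempotent of $\Lambda$ distinct from $0$ and $1$, and $1-e=\sum_{v\in V_2}e_v$.

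The core step is then to identify $e\Lambda(1-e)$ with the span of (residues of) paths in $kQ$ that start in $V_1$ and end in $V_2$. Using the standard convention $e_{s(p)}\, p\, e_{t(p)}=p$ for a path $p$ with source $s(p)$ and target $t(p)$, the product $e\,\overline{p}\,(1-e)$ is nonzero only if $s(p)\in V_1$ and $t(p)\in V_2$; and any such path class contributes to $e\Lambda(1-e)$. By the choice of partition no such path exists, so $e\Lambda(1-e)=0$. This puts $\Lambda$ into exactly the triangular matrix form displayed in the paragraph preceding the proposition, namely $\Lambda\simeq \left(\begin{smallmatrix} e\Lambda e & 0 \\ (1-e)\Lambda e & (1-e)\Lambda(1-e)\end{smallmatrix}\right)$, contradicting that $\Lambda$ is triangular reduced.

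The proof is essentially a dictionary between the combinatorics of $Q$ and block-vanishing of products of vertex idempotents, so there is no homological input and no main obstacle beyond pinning down the convention for source/target of arrows; once that convention is fixed, the identification of $e\Lambda(1-e)$ as the $k$-span of path classes from $V_1$ to $V_2$ makes the argument immediate.
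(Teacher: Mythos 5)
Correct, and essentially the same approach as the paper: the paper argues directly, iteratively enlarging a set of vertices reachable from a chosen starting vertex by invoking $e\Lambda(1-e)\neq 0$ at each stage to produce a new outgoing arrow, whereas you argue the contrapositive, producing from a failure of path-connectedness a partition $V_1\sqcup V_2$ with no directed path from $V_1$ to $V_2$ and hence $e\Lambda(1-e)=0$ for $e=\sum_{v\in V_1}e_v$. Both rest on exactly the same identification of $e\Lambda(1-e)$, for $e$ a sum of vertex idempotents, with the $k$-span of residues of paths from $V_1$-vertices to $V_2$-vertices.
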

\begin{proof}
  Let $Q_0 = \{v_1,v_2,\ldots,v_n\}$.  Let $e = v_1$. Then
  $v_1\Lambda( \sum_{i=2}^n v_i) \neq (0)$. Hence there is an arrow
  $v_1\lxr v_{i_2}$ for $v_{i_2}\neq v_1$.  Let $i_1 = 1$.  Now let
  $e = v_{i_1} + v_{i_2}$.  Again using that $e\Lambda(1-e) \neq (0)$,
  we infer that there is an arrow $v_{i_1}\lxr v_{i_3}$ or
  $v_{i_2}\lxr v_{i_3}$ for some $v_{i_3}\not\in \{v_{i_1},v_{i_2}\}$.
  We can continue this process and get subsets
  $\V_t = \{v_{i_1}, v_{i_2},\ldots, v_{i_t}\}$ in $Q_0$ and a vertex
  $v_{i_{t+1}}\not\in \V_t$ with an arrow from some
  $v_{i_{j}}\lxr v_{i_{t+1}}$ for $j\in\{ 1,2,\ldots, t\}$.  We can only
  continue this construction until we reach $\V_n$, since then
  $e = \sum_{j=1}^n v_{i_j} = 1$ and $1 - e = 0$.  In other words,
  there is a path from the vertex $v_1$ to any other vertex different
  from $v_1$ in $Q$. Since $v_1$ can be chosen to be any vertex in
  $Q$, the quiver $Q$ is path connected.
\end{proof}

A concept of a homological heart of a quotient of a path algebra is
introduced in \cite{GreenMarcos}, which we recall and discuss next.
As above let $\Lambda = kQ/I$ be an admissible quotient of the path
algebra $kQ$ over a field $k$.  The homological heart of $Q$ is
given as follows:  Let 
\[X = \{v\in Q_0\mid v \textrm{\ is a vertex on a non-trivial
      oriented cycle in $Q$}\}\]
and let 
\[Y = \{v\in Q_0\mid y \textrm{\ is a vertex on a path starting and
    ending in $X$}.\]
Then the \emph{homological heart $H(Q)$ of $Q$} is the
subquiver of $Q$ with vertex set $Y$.  

In order to discuss the properties of the homological heart of a
quiver, we need to introduce the following.  Let $\Gamma$ be a full
subquiver of $Q$.  Define the following three full subquivers of $Q$
by their corresponding vertex sets:
\[\Gamma^+_0 = \{v\in Q_0\mid v\not\in \Gamma_0, \exists \textrm{\
    path\ } \Gamma_0 \leadsto v\}\]
\[\Gamma^-_0 = \{v\in Q_0\mid v\not\in \Gamma_0, \exists \textrm{\
    path\ } v \leadsto \Gamma_0 \}\]
\[\Gamma^o_0 = \{v\in Q_0\mid v\not\in \Gamma_0, \not\exists \textrm{\
    path\ } \Gamma_0 \leadsto v \textrm{\ and\ } \not\exists \textrm{\
    path\ }  v \leadsto \Gamma_0 \}\]
Let $e^+(\Gamma)$, $e^-(\Gamma)$ and $e^o(\Gamma)$ be the sum of all
the vertices in $\Gamma^+$, $\Gamma^-$ and $\Gamma^o$, respectively. 

The homological heart $H=H(Q)$ is a full subquiver.  Let $e^+$, $e^-$
and $e^o$ the corresponding idempotents defined above for $\Gamma = H$
and $e$ the sum of the vertices in $H$.  Then the following is proved in
\cite[Theorem 5.9 (2)]{GreenMarcos}.
\begin{thm}[\protect{\!\!\cite[Theorem 5.9 (2)]{GreenMarcos}}]
Let $\Lambda = kQ/I$ be an admissible quotient of the path
algebra $kQ$ over a field $k$ with homological heart $H=H(Q)$ and $e$
the sum of the vertices in $H$.  Then $\findim \Lambda < \infty$  if
and only if $\findim e\Lambda e < \infty$. 
\end{thm}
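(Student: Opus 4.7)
The plan is to execute a two-step triangular reduction via \thmref{thmFGR}, driven by a table of vanishing blocks $e_i\Lambda e_j$ associated to the partition $Q_0 = Y \sqcup Y^+ \sqcup Y^- \sqcup Y^o$ (with $Y^?$ defined just before the statement), and then to observe that the three ``outer'' corner algebras are admissible quotients of path algebras of acyclic quivers and hence have finite global dimension.

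First I would introduce the idempotents $e, e^+, e^-, e^o$ corresponding to $Y, Y^+, Y^-, Y^o$, and carry out a direct case analysis to determine which arrows $u \to v$ of $Q$ are possible.  Using only the defining property that every vertex of $Y$ lies on some path $X \leadsto X$, one obtains the following rule: from $Y$ only into $Y \cup Y^+$; from $Y^+$ only into $Y^+$; from $Y^-$ into any of the four sets; from $Y^o$ only into $Y^+ \cup Y^o$.  For instance, an arrow $u \to v$ with $u \in Y$ and $v \in Y^-$ would yield a path $X \leadsto u \to v \leadsto X$, forcing $v \in Y$, a contradiction; the other cases are analogous.  Equivalently, setting $f = e + e^+$ and $g = e^- + e^o$, the block $g\Lambda f$ vanishes, giving a triangular presentation
\[
\Lambda \;\simeq\; \begin{pmatrix} f\Lambda f & f\Lambda g \\ 0 & g\Lambda g \end{pmatrix}.
\]

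Next, \thmref{thmFGR} reduces the finiteness of $\findim \Lambda$ to the simultaneous finiteness of $\findim f\Lambda f$ and $\findim g\Lambda g$.  The same vanishing table shows that $f\Lambda f$ is itself triangular with diagonal $e\Lambda e$ and $e^+\Lambda e^+$ (since $e\Lambda e^+ = 0$) and $g\Lambda g$ is triangular with diagonal $e^-\Lambda e^-$ and $e^o\Lambda e^o$ (since $e^-\Lambda e^o = 0$).  Two further applications of \thmref{thmFGR} then leave me needing to bound $\findim e\Lambda e$, $\findim e^+\Lambda e^+$, $\findim e^-\Lambda e^-$, and $\findim e^o\Lambda e^o$.

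Finally, I would argue that $\findim e^?\Lambda e^? < \infty$ for $? \in \{+,-,o\}$.  The vanishing table implies that once a path of $Q$ leaves $Y^?$ it cannot return to $Y^?$, so every path of $Q$ with both endpoints in $Y^?$ stays inside $Y^?$; hence $e^?\Lambda e^?$ is canonically an admissible quotient of the path algebra $kQ|_{Y^?}$ of the induced full subquiver.  The subquiver $Q|_{Y^?}$ is acyclic because every oriented cycle in $Q$ has all its vertices in $X \subseteq Y$, and a standard topological-order induction on vertices shows that every admissible quotient of the path algebra of a finite acyclic quiver has finite global dimension, hence finite finitistic dimension.  The only delicate step is the combinatorial case analysis producing the vanishing table; after that, all three reductions and the acyclic-case bound are mechanical.
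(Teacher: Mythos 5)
Your combinatorial analysis of which blocks $e_i\Lambda e_j$ vanish is correct (modulo a consistent but opposite choice of the convention relating arrows to idempotents — in the paper's convention it is $e^+\Lambda e = 0$, not $e\Lambda e^+ = 0$; the underlying claim ``no paths from $Y^+$ to $Y$'' is right either way). The observation that $e^+, e^o, e^-$ live on an acyclic subquiver and hence the corners $e^+\Lambda e^+$, $e^o\Lambda e^o$, $e^-\Lambda e^-$ have finite global dimension is also correct. Iterating Theorem~\ref{thmFGR} with its \emph{upper} bound therefore does give the implication $\findim e\Lambda e < \infty \Rightarrow \findim\Lambda < \infty$, which is exactly how the paper proves that direction.

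The gap is in the converse. You assert that Theorem~\ref{thmFGR} ``reduces the finiteness of $\findim\Lambda$ to the simultaneous finiteness of $\findim f\Lambda f$ and $\findim g\Lambda g$'', i.e.\ a two-way equivalence. But Theorem~\ref{thmFGR} only asserts $\findim R \leq \findim\Lambda \leq 1 + \findim R + \findim S$: finiteness of $\findim\Lambda$ controls the \emph{one} corner $R$ (the corner whose idempotent $\varepsilon$ satisfies $\varepsilon\Lambda(1-\varepsilon) = 0$), and it does \emph{not} control $S$. In the quiver picture, the corner you can peel off with the lower bound is always the ``terminal'' block — the one with no outgoing paths to the complement. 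In the present situation, every idempotent $\varepsilon$ containing $e$ with $\varepsilon\Lambda(1-\varepsilon)=0$ must also contain $e^+$, because $Y$ always has paths into $Y^+$; and once you restrict to $\varepsilon\Lambda\varepsilon$, the terminal block is again $Y^+$, never $Y$. So at no stage of any chain of triangular reductions is $e\Lambda e$ in the $R$ position, and Theorem~\ref{thmFGR} alone never yields $\findim e\Lambda e \leq \findim\Lambda$. Your proposal therefore proves the ``if'' direction only and leaves the ``only if'' direction unjustified.

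The paper proves that converse direction by a genuinely different mechanism: it first passes to the recollement $(\smod A, \smod\Lambda, \smod e^-\Lambda e^-)$, verifies that the inclusion is a homological embedding and that $\gld e^-\Lambda e^- < \infty$, lifts this to a recollement of bounded derived categories, and invokes Happel's Theorem~\ref{thmHappel} — which, unlike Theorem~\ref{thmFGR}, does give a two-way and two-sided control — to obtain $\findim A < \infty$. It then uses a \emph{second} recollement $(\smod B, \smod A, \smod e\Lambda e)$ together with exactness of the section functor $\ml$ to conclude $\findim e\Lambda e \leq \findim A$. This is where the real work of the converse lies, and no amount of rearranging the triangular reductions can replace it.
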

We want to show that this result can be obtained from the reduction
techniques presented in this paper.  Let 
\[1_\Lambda = e^+ + e^o + e + e^-,\] 
then 
\[\Lambda \simeq \begin{pmatrix}
e^+\Lambda e^+ & e^+\Lambda e^o & e^+\Lambda e & e^+\Lambda e^-\\
e^o\Lambda e^+ & e^o\Lambda e^o & e^o\Lambda e & e^o\Lambda e^-\\
e\Lambda e^+ & e\Lambda e^o & e\Lambda e & e\Lambda e^-\\
e^-\Lambda e^+ & e^-\Lambda e^o & e^-\Lambda e & e^-\Lambda e^-
\end{pmatrix}\]
Here 
\[e^+\Lambda e = e^+\Lambda e^- = e^o\Lambda e = e^o\Lambda e^- = e^+\Lambda e^o = e\Lambda e^- = e\Lambda e^o = (0).\]
Hence 
\[\Lambda \simeq \begin{pmatrix}
e^+\Lambda e^+ & 0 & 0 & 0\\
e^o\Lambda e^+ & e^o\Lambda e^o & 0 & 0\\
e\Lambda e^+ & 0 & e\Lambda e & 0\\
e^-\Lambda e^+ & e^-\Lambda e^o & e^-\Lambda e & e^-\Lambda e^-
\end{pmatrix}\]
By \cite[Proposition 5.1 (4)]{GreenMarcos} the full subquiver with
vertex set $H^+_0\cup H^-_0\cup H^o_0$ has no oriented cycles, therefore the
algebras $\left(\begin{smallmatrix} 
e^+\Lambda e^+ & 0 \\
e^o\Lambda e^+ & e^o\Lambda e^o 
\end{smallmatrix}\right)$ and $e^-\Lambda e^-$ have finite global
dimension.  Iterated use of Theorem \ref{thmFGR} show that if $\findim
e\Lambda e < \infty$, then $\findim\Lambda < \infty$.    

Assume now conversely that $\findim\Lambda < \infty$ and consider the idempotent element of $\Lambda\colon$
\[
f=\begin{pmatrix}
0 & 0 & 0 & 0\\
0 & 0 & 0 & 0\\
0 & 0 & 0 & 0\\
0 & 0 & 0 & 1
\end{pmatrix}.
\]
Then by Example~\ref{examrecolmodcat} we have the following recollement of module categories:
\[
\xymatrix@C=0.5cm{
\smod{A} \ar[rrr]^{\mi} &&& \smod{\Lambda} \ar[rrr]^{\mathsf{f}:=f(-) \ \ } \ar
@/_1.5pc/[lll]_{}  \ar
 @/^1.5pc/[lll]^{} &&& \smod{e^-\Lambda e^-}
\ar @/_1.5pc/[lll]_{} \ar
 @/^1.5pc/[lll]^{}
 } 
\]
where
\[
A=\begin{pmatrix}
e^+\Lambda e^+ & 0 & 0 \\
e^o\Lambda e^+ & e^o\Lambda e^o & 0 \\
e\Lambda e^+ & 0 & e\Lambda e 
\end{pmatrix}.
\]
Since $\Lambda$ is a triangular matrix algebra, it follows by \cite[Theorem 3.9]{Psaroud} that the functor $\mi$ is a homological embedding. Since $\gld{e^-\Lambda e^-}<\infty$, by \cite[Theorem~7.2 (ii)]{Psaroud} we get a lifting of the above recollement to a recollement situation at the level of bounded derived categories as follows:
\[
\xymatrix@C=0.5cm{
\mathsf{D}^{\mathsf{b}}(\smod{A}) \ar[rrr]^{\mi} &&& \mathsf{D}^{\mathsf{b}}(\smod\Lambda) \ar[rrr]^{\mathsf{f} \ \ }  \ar @/_1.5pc/[lll]_{}  \ar @/^1.5pc/[lll]^{} &&& \mathsf{D}^{\mathsf{b}}(\smod e^-\Lambda e^-) 
\ar @/_1.5pc/[lll]_{} \ar @/^1.5pc/[lll]^{}
 } 
\]
By Theorem~\ref{thmHappel}, it follows that $\findim{A}<\infty$. Then,
as above, the module category of $A$ admits the following recollement
situation:
\[
\xymatrix@C=0.5cm{
\smod{A/Af'A} \ar[rrr]^{} &&& \smod{A} \ar[rrr]^{ f'(-)} \ar
@/_1.5pc/[lll]_{}  \ar
 @/^1.5pc/[lll]^{} &&& \smod{f'Af'}
\ar @/_1.5pc/[lll]_{} \ar
 @/^1.5pc/[lll]^{}
 } 
\]
where $f'=\bigl(\begin{smallmatrix}
0 & 0 & 0 \\
0 & 0 & 0 \\
0 & 0 & 1
\end{smallmatrix}\bigr)$, $A/Af'A\simeq \bigl(\begin{smallmatrix}
e^+\Lambda e^+ & 0 \\
e^o\Lambda e^+ & e^o\Lambda e^o
\end{smallmatrix}\bigr)$ and $f'Af'\simeq e\Lambda e$. The left
adjoint $\ml$ of $f'(-)$ is an exact functor. Then from
\cite[Theorem~5.1]{Psaroud}, or by just using that $(\ml, f'(-))$ is
an adjoint pair and $\ml$ is exact which preserves projective modules,
we get that $\findim{e\Lambda e}\leq \findim{A}$. We infer that
$\findim{e\Lambda e}<\infty$.

\subsection{Vertex removal, projective dimension at most $1$} 
Let $(\A,\B,\C)$ be a recollement of abelian categories.  Recall that
\[
{\pgld}_{\B}\A = \{\pd_\B \mi(A)\mid A\in \A\}
\]
denotes the $\A$-relative projective global dimension of $\B$.  If
$\pgld_{\B}\A \leq 1$ we show that $\Findim\B$ is finite if and only if
$\Findim\C$ is finite.  Since $\C\simeq \B/\A$, we can interpret the
result as follows: We can remove $\A$ from $\B$ and not lose any
information about the finiteness of the finitistic dimension.  For an
algebra $\Lambda$ it means that $\Lambda$ and $e\Lambda e$ has
mutually finite finitistic dimensions for an idempotent $e$ whenever
$\pd_\Lambda (1-e)\Lambda/(1-e)\mathfrak{r} \leq 1$.  If $\Lambda$ is
a quotient of a path algebra and $e$ is a sum of vertices, then the
vertices in the quiver of $e\Lambda e$ correspond to the ones
occurring in $e$, that is, the vertices occurring in $1-e$ are
removed.  This is why we call the transition from $\Lambda$ to
$e\Lambda e$ \emph{vertex removal}.  

We start with the general situation of a recollement of abelian
categoies.

\begin{thm}
\label{thmfindimpgld}
Let $(\A,\B,\C)$ be a recollement of abelian categories. Assume that
$\pgld_{\B}{\A}\leq 1$. Then the following hold: 
\[
\maxx\{\Findim{\A}, \Findim{\C}\} \leq \Findim{\B} \leq \Findim{\C}+2.
\] 
In particular, we have that $\Findim{\B}<\infty$ if and only if
$\Findim{\C}<\infty$. 
\end{thm}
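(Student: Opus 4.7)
The plan is to prove the two inequalities separately, exploiting two technical ingredients already at hand: Proposition~\ref{recollfiniteArelglodimofB}(iv) (which, under $\pgld_{\B}{\A}\leq 1$, tells us that $\mi$ is a homological embedding and that $\me$ preserves projective objects) together with Lemma~\ref{lempigldfinite}(ii) (which gives the bound $\pd_{\B}\ml(C)\leq \pd_{\C}C + \pgld_{\B}{\A} + 1$).

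For $\Findim\A \leq \Findim\B$, I would use that $\mi$ is a homological embedding, so $\Ext^n_\A(A,Y)\simeq \Ext^n_\B(\mi(A),\mi(Y))$ for all $A,Y \in \A$ and $n\geq 0$, which yields $\pd_\A A\leq \pd_\B\mi(A)$. To transfer finiteness in the needed direction, I would apply the exact functor $\mi$ to a finite projective resolution of $A$; each $\mi(P_i)$ has projective dimension at most one by hypothesis, so $\pd_\B\mi(A)\leq \pd_\A A + 1$, forcing $\mi(A)$ to have finite projective dimension whenever $A$ does. Hence $\pd_\A A\leq \pd_\B \mi(A)\leq \Findim\B$. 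For $\Findim\C\leq \Findim\B$, since $\me$ is exact and preserves projectives, $\pd_\C \me(X)\leq \pd_\B X$ for every $X\in \B$; applying this to $X=\ml(C)$ together with $\me\ml\simeq \iden_\C$ from Proposition~\ref{properties}(iv) gives $\pd_\C C\leq \pd_\B\ml(C)$, and Lemma~\ref{lempigldfinite}(ii) provides the finiteness transfer $\pd_\B\ml(C)\leq \pd_\C C+2$.

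For the upper bound $\Findim\B\leq \Findim\C+2$, take $B\in \B$ with $\pd_\B B<\infty$. Then $\pd_\C \me(B)\leq \pd_\B B<\infty$, so $\pd_\C\me(B)\leq \Findim\C$, and Lemma~\ref{lempigldfinite}(ii) yields $\pd_\B\ml\me(B)\leq \Findim\C+2$. I would now invoke the fundamental four-term exact sequence of Proposition~\ref{properties}(vii):
\[
0 \lxr \mi(A) \lxr \ml\me(B) \lxr B \lxr \mi\mq(B) \lxr 0,
\]
with $A\in\A$, and split it as $0\to \mi(A)\to \ml\me(B)\to M\to 0$ and $0\to M\to B\to \mi\mq(B)\to 0$ where $M$ is the common image. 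Both end terms $\mi(A)$ and $\mi\mq(B)$ have $\B$-projective dimension at most $1$ by hypothesis. The standard inequalities $\pd Z\leq \maxx\{\pd X+1,\pd Y\}$ and $\pd Y\leq \maxx\{\pd X,\pd Z\}$ applied to these two short exact sequences then give $\pd_\B M\leq \maxx\{2, \Findim\C+2\}=\Findim\C+2$ and consequently $\pd_\B B\leq \Findim\C+2$.

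The essential inputs are Proposition~\ref{recollfiniteArelglodimofB}(iv) and Lemma~\ref{lempigldfinite}(ii); once these are granted the rest is bookkeeping with the fundamental exact sequence. The only delicate point I would watch is the direction of each dimension comparison through the six functors: one uses $\me$ to pull projective dimensions \emph{down} from $\B$ to $\C$ and $\ml$ to push finiteness (with a controlled loss of $+2$) \emph{up} from $\C$ to $\B$, while the $\A$-terms appearing in the fundamental sequence are absorbed uniformly by the hypothesis $\pgld_{\B}{\A}\leq 1$.
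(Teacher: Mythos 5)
Your proof is correct, and it is more self-contained than the paper's. The paper obtains both $\Findim\A\leq\Findim\B$ and $\Findim\B\leq\Findim\C+2$ simply by citing Theorem~5.5 of \cite{Psaroud}; you instead reconstruct them internally. For the former you use the homological-embedding part of Proposition~\ref{recollfiniteArelglodimofB}(iv) to get $\pd_\A A\leq\pd_\B\mi(A)$ and transfer finiteness by applying the exact functor $\mi$ to a finite projective resolution of $A$ (each $\mi(P_i)$ having $\pd_\B\leq 1$ by hypothesis); for the latter you split the four-term sequence of Proposition~\ref{properties}(vii) into two short exact sequences and run the standard projective-dimension inequalities, absorbing the $\A$-terms using $\pgld_\B\A\leq 1$. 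For $\Findim\C\leq\Findim\B$ your argument coincides with the paper's: push $C$ up to $\ml(C)$ via the bound $\pd_\B\ml(C)\leq\pd_\C C+2$ from the proof of Lemma~\ref{lempigldfinite}, then pull back using that $\me$ is exact, preserves projectives, and $\me\ml\simeq\iden_\C$. The trade-off is clear: the paper's citations keep the proof short, while your direct derivations make the directional bookkeeping through the six functors explicit and confirm nothing beyond the hypothesis $\pgld_\B\A\leq 1$ is actually used, which is a useful sanity check on the cited material.
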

\begin{proof}
Since $\pgld_{\B}{\A}\leq 1$, it follows from
Proposition~\ref{recollfiniteArelglodimofB} (iv) 
that the quotient functor $\me\colon \B\lxr \C$ preserves projective
objects. Then from \cite[Theorem 5.5]{Psaroud} (ii) it follows that
$\Findim{\B} \leq \Findim{\C}+2$. Also from \cite[Theorem 5.5]{Psaroud}
(i) we have $\Findim{\A} \leq \Findim{\B}$. It remains to show that
$\Findim{\C} \leq \Findim{\B}$.

  Let $C$ be an object in $\C$ of finite projective dimension. Suppose
  that $\Findim{\B}=m<\infty$. By the formula
  $\pd_{\B}\ml(C)\leq \pd_{\C}C+\pgld_{\B}{\A}+1$, which is the dual
  of formula \eqref{formulainjectdim} in the proof of
  Lemma~\ref{lempigldfinite}, we have that
  $\pd_\B\ml(C) \leq \pd_{\C}C+2$ and therefore $\pd_{\B}\ml(C)\leq
  m$. Thus we have an exact sequence
\[
 \xymatrix{
  0 \ar[r]^{ } & P_m \ar[r]^{} & \cdots \ar[r]^{} &
  P_0  \ar[r]^{} & \ml(C) \ar[r] & 0  }
\]
with $P_i$ in $\Proj{\B}$. Applying the functor $\me$ and using 
Proposition~\ref{properties} (iv) we get the exact sequence
\[
 \xymatrix{
  0 \ar[r]^{ } & \me(P_m) \ar[r]^{} & \cdots \ar[r]^{} &
  \me(P_0)  \ar[r]^{} & C \ar[r] & 0  }
\]
where each $\me(P_i)$ lies in $\Proj{\C}$. We infer that
$\pd_{\C}C\leq m = \Findim{\B}$ and this completes the proof.
\end{proof}

\begin{rem}
  The inequality of Theorem~\ref{thmfindimpgld} was proved in
  \cite[Proposition 4.15]{Psaroud} to hold for the global
  dimension. Note also that the upper bound of
  Theorem~\ref{thmfindimpgld} generalizes \cite[Proposition
  2.1]{Ful-Sao:FinDimConjArtRings} from basic left Artin rings to the
  setting of abstract abelian categories, thus to any recollement of
  module categories with $R/ReR$-relative projective global dimension
  ${\pgld}_{R/ReR}{R}\leq 1$.
\end{rem}

Let $\Lambda$ be an artin algebra with an idempotent $e$.  As in
  Example \ref{examrecolmodcat} it gives rise to the recollement
  $(\smod\Lambda/\Lambda e\Lambda, \smod\Lambda, \smod e\Lambda e)$,
  where $\Lambda/\Lambda e\Lambda$ is also an artin algebra and
  therefore all modules are filtered in semisimple modules.  Then
  $(1 - e)\Lambda/(1 - e)\mathfrak{r}$ is right
  $\Lambda/\Lambda e\Lambda$-module.  In fact it is semisimple, where
  all simple $\Lambda/\Lambda e\Lambda$-modules occur as a direct
  summand.  Then
  $\pgld_{\smod\Lambda/\Lambda e\Lambda}\smod\Lambda \leq 1$ if and
  only if $\pd_\Lambda (1 - e)\Lambda/(1 - e)\mathfrak{r} \leq 1$.
  Using this we have the following immediate consequence of the above.

\begin{cor}\label{cor:recollfpd-projdim}
  Let $\Lambda$ be an artin algebra with an idempotent $e$.  Assume
  that $\pd_\Lambda  (1-e)\Lambda/ (1-e)\rad(\Lambda) \leqslant 1$.  Then
  $\findim\Lambda<\infty$ if and only if $\findim e\Lambda e<\infty$.
\end{cor}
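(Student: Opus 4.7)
The plan is to deduce this as a direct instance of Theorem~\ref{thmfindimpgld} applied to the module-category recollement from Example~\ref{examrecolmodcat} with $R=\Lambda$, namely
\[
(\smod\Lambda/\Lambda e\Lambda,\,\smod\Lambda,\,\smod e\Lambda e).
\]
Under this identification, $\Findim\smod\Lambda = \findim\Lambda$ and $\Findim\smod e\Lambda e = \findim e\Lambda e$ because $\Lambda$ (and hence $e\Lambda e$) is an artin algebra, so the desired equivalence is precisely the ``in particular'' conclusion of Theorem~\ref{thmfindimpgld} once we verify the hypothesis $\pgld_{\smod\Lambda}{\smod\Lambda/\Lambda e\Lambda}\leq 1$.

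To verify that hypothesis, I would argue as foreshadowed in the paragraph preceding the corollary. Since $\Lambda$ is artin, the quotient $\Lambda/\Lambda e\Lambda$ is artin as well, and every finitely presented right $\Lambda/\Lambda e\Lambda$-module admits a finite composition series. The semisimple module $(1-e)\Lambda/(1-e)\rad(\Lambda)$, viewed as a $\Lambda/\Lambda e\Lambda$-module, contains every simple $\Lambda/\Lambda e\Lambda$-module as a direct summand. Since projective dimension is inherited by direct summands, the hypothesis $\pd_\Lambda (1-e)\Lambda/(1-e)\rad(\Lambda)\leq 1$ gives $\pd_\Lambda \mi(S)\leq 1$ for every simple $\Lambda/\Lambda e\Lambda$-module $S$. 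A standard induction on composition length, using the long exact $\Ext_\Lambda$ sequence attached to each short exact sequence in the composition series, then upgrades this bound to $\pd_\Lambda \mi(M)\leq 1$ for every $M$ in $\smod\Lambda/\Lambda e\Lambda$. Hence $\pgld_{\smod\Lambda}{\smod\Lambda/\Lambda e\Lambda}\leq 1$, and Theorem~\ref{thmfindimpgld} yields the stated equivalence.

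There is no substantive obstacle here: the content of the corollary is entirely contained in Theorem~\ref{thmfindimpgld}, and the only task is the purely notational translation between the concrete module-theoretic hypothesis on $(1-e)\Lambda/(1-e)\rad(\Lambda)$ and the abstract recollement condition $\pgld_{\B}{\A}\leq 1$ required by that theorem. The induction on composition length is the most delicate ingredient, but it is wholly standard.
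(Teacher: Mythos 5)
Your proposal is correct and follows essentially the same route as the paper: the paper proves the corollary by invoking Theorem~\ref{thmfindimpgld} for the recollement of Example~\ref{examrecolmodcat} and noting, in the paragraph immediately preceding the corollary, that since all finitely generated $\Lambda/\Lambda e\Lambda$-modules are filtered by simples and $(1-e)\Lambda/(1-e)\rad(\Lambda)$ contains every simple $\Lambda/\Lambda e\Lambda$-module as a direct summand, the hypothesis on its projective dimension is equivalent to $\pgld_{\smod\Lambda}{\smod\Lambda/\Lambda e\Lambda}\leq 1$. Your filling in of the ``induction on composition length'' step is exactly the implicit content of that remark.
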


\subsection{Vertex removal, finite injective dimension}
Let $(\A,\B,\C)$ be a recollement of abelian categories with
  enough injectives and projectives. Recall that 
\[
{\igld}_{\B}\A = \{ {\id}_{\B}\mi(A)\mid A\in \A\}
\]
denotes the $\A$-relative injective global dimension of $\B$.  If
$\igld_\B\A\leq 1$, the same statement as in Theorem \ref{thmfindimpgld} is
true, namely $\Findim\B$ is finite if and only if $\Findim\C$ is finite, see Remark~\ref{remlowerbound} below. This has the same translation for an artin algebra $\Lambda$ as in Corollary \ref{cor:recollfpd-projdim}, namely through the condition $\id_\Lambda(1-e)\Lambda/(1-e)\mathfrak{r} < \infty$.

Next we prove the main result of this subsection
which gives an upper bound for the finitistic dimension of $\B$ using
the finiteness of the $\A$-relative injective global dimension of
$\B$. This is a general version of Theorem~B presented in the Introduction.

\begin{thm}
\label{thmigldfinite}
Let $(\A,\B,\C)$ be a recollement of abelian categories such that $\A$ is a finite length category and that $\B$ has projective covers. 
Then we have
\[
\Findim{\B}\leq \Findim{\C}+{\igld}_{\B}{\A}
\]
\end{thm}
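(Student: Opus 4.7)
The plan is to bound $\pd_\B B$ for an arbitrary $B\in\B$ of finite projective dimension by transferring the computation into $\C$. Write $t=\igld_\B\A$ and assume it finite (otherwise there is nothing to prove). The hypotheses on $\A$ and $\B$ are tailored exactly so that Lemma~\ref{lemtruncatedprojresol} applies, giving a projective resolution
\[
\cdots \to \ml(Q_1) \to \ml(Q_0) \to P_t \to \cdots \to P_0 \to B \to 0
\]
with $Q_j\in\Proj\C$. The key move is to truncate at the $(t+1)$-st syzygy $K=\Omega^{t+1}(B)$, which inherits a projective resolution $\cdots\to\ml(Q_1)\to\ml(Q_0)\to K\to 0$ whose terms all lie in $\ml(\Proj\C)$.

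Now apply the exact functor $\me$, using $\me\ml\simeq\iden_\C$ from Proposition~\ref{properties}, to obtain a projective resolution $\cdots\to Q_1\to Q_0\to\me(K)\to 0$ of $\me(K)$ in $\C$. Exactness of $\ml$ gives the syzygy identification $\Omega^n_\B(K)\simeq\ml(\Omega^n_\C(\me(K)))$ for every $n\geq 1$. The pivotal observation is that $\ml$ reflects projectives: if $\ml(X)$ is projective in $\B$, then by fully faithfulness of $\ml$ one has $\Hom_\C(X,-)\simeq\Hom_\B(\ml(X),\ml(-))$, which is the composition of the exact functor $\ml$ with the exact functor $\Hom_\B(\ml(X),-)$, hence itself exact; so $X$ is projective in $\C$. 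Combined with preservation of projectives by $\ml$, this yields $\pd_\B K=\pd_\C\me(K)$, and in particular finiteness of $\pd_\B B$ (forcing finiteness of $\pd_\B K$) transfers to $\pd_\C\me(K)\leq\Findim\C$.

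Reading off the resolution of $B$ then gives $\pd_\B B\leq (t+1)+\pd_\B K\leq t+1+\Findim\C$, and the sharper stated bound $\Findim\B\leq\Findim\C+t$ is recovered by a case split that saves one step: the degenerate case $K=0$ yields $\pd_\B B\leq t$ directly from the truncated resolution, while in the case $K$ is projective one notes that the surjection $\ml(Q_0)\twoheadrightarrow K$ splits, so $K$ is a direct summand of $\ml(Q_0)$ and one position of the resolution is absorbed back into $P_t$. The main obstacle I anticipate is precisely this final bookkeeping---matching the naive $+t+1$ against the stated $+t$, which should work because the $(t+1)$-st syzygy lies in ${}^{\perp}\mi(\A)$ by the proof of Lemma~\ref{lemtruncatedprojresol} and therefore cannot acquire a nontrivial summand in $\mi(\A)$---together with verifying that $\ml$ reflects projectives, the engine that allows projective dimensions to transfer cleanly between $\B$ and $\C$.
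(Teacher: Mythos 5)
Your proposal follows the same strategy as the paper's proof: invoke Lemma~\ref{lemtruncatedprojresol} to produce the truncated resolution, apply the exact functor $\me$, and transfer projective dimension between $\B$ and $\C$ along the $\ml(\Proj\C)$-tail. However, there are two concrete problems in the way you execute the transfer and the final bookkeeping.

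First, $\ml$ is a left adjoint, hence only right exact, not exact; the assertion ``Exactness of $\ml$ gives the syzygy identification $\Omega^n_\B(K)\simeq\ml(\Omega^n_\C(\me(K)))$'' is false. Indeed $\ml\me(M)\to M$ has in general a nontrivial kernel $\mi(A)$ (Proposition~\ref{properties}(vii)), so syzygies in $\B$ are not of the form $\ml(\text{something})$. The correct way to establish $\pd_\B K = \pd_\C\me(K)$ for an object $K$ admitting a resolution entirely by $\ml(\Proj\C)$ is: since $K\in{}^\bot\mi(\A)$ and $\mq(K)=0$ (by adjunction and $\mq\ml = 0$), the sequence $0\to\mi(A)\to\ml\me(K)\to K\to 0$ exists and splits because $\Ext^1_\B(K,\mi(\A))=0$ (Lemma~\ref{lemmaThmPSS}); hence $K$ is a direct summand of $\ml\me(K)$, so $K$ is projective iff $\me(K)$ is. Applied to each syzygy $\Omega^m_\B(K)$ (which is again covered entirely by $\ml(\Proj\C)$), this gives the equality of projective dimensions. (Your ``$\ml$ reflects projectives'' observation survives this correction because its verification needs only right exactness of $\ml$, despite what you wrote.)

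Second, and more seriously, your case split does not establish the stated $+t$ bound. Your trichotomy covers $K=0$ and $K$ projective, but the generic case is $\pd_\B K\geq 1$, where the arithmetic yields $\pd_\B B = (t+1) + \pd_\B K = (t+1) + \pd_\C\me(K)$, and there is no structural reason that $\pd_\C\me(K)\leq\Findim\C - 1$ (the object $\me(K)$ is the kernel of $\me(P_t)\to\me(\Omega^t(B))$, but $\me(P_t)$ need not be projective in $\C$, so $\me(K)$ is not a first syzygy in $\C$). In the case $K$ projective and nonzero you also get $\pd_\B B\leq t+1$, not $\leq t$; and if the $P_0,\dots,P_t$ come from projective covers, Nakayama actually forces $K=0$ whenever $K$ is projective, so this case is empty anyway. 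Thus nothing in the proposal saves the extra step, and the bound obtained is $\Findim\C + \igld_\B\A + 1$. Your unease about matching the stated bound is well-founded: the argument, both as you wrote it and as the paper sketches it, delivers the $+(t+1)$ bound, and the paper's passage from the applied-$\me$ resolution of $\me(\Omega^{t+1}(X))$ to the claim ``the length of the resolution is bounded by $\Findim\C + t$'' elides exactly the step you were struggling with.
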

\begin{proof}
Assume that $\igld_{\B}{\A}=\sup \{{\id}_{\B}\mi(A) \ | \ A\in\A\}=t<\infty$. Let $X$ be an object in $\B$ of finite projective dimension. Then from Lemma~\ref{lemtruncatedprojresol} there is a finite projective resolution as follows:
\begin{equation}
\label{finitetruncatedresolution}
\xymatrix{
\cdots \ar[r] &
\ml(Q_1) \ar[r] &
\ml(Q_0) \ar[r] &
P_{t} \ar[r] &
\cdots \ar[r] &
P_0 \ar[r] &
X \ar[r] &
0
} 
\end{equation}
with $Q_1$ and $Q_0$ in $\Proj\C$. Applying the functor
$\me\colon \B\lxr \C$ to $(\ref{finitetruncatedresolution})$ and using
Proposition~\ref{properties} (iv), we obtain the exact sequence
\[
\xymatrix{
\cdots \ar[r] &
Q_1 \ar[r] &
Q_0 \ar[r] & e(\Omega^t(X)) \ar[r] & 0
}
\]
which is a finite projective resolution of $\me(\Omega^t(X))$. This
implies that $\pd_{\C}\me(\Omega^t(X))\leq \Findim{\C}$. Hence, the
length of the resolution $(\ref{finitetruncatedresolution})$ is
bounded by $\Findim{\C} + t$. We conclude that $\pd{_{\B}X}\leq
\Findim{\C}+t$, and the claim follows.
\end{proof}

\begin{rem}
\label{remlowerbound}
It is natural to ask if in Theorem~\ref{thmigldfinite} we can get a
lower bound for $\Findim{\B}$. We show that this is indeed the case if
we assume that ${\igld}_{\B}{\A}\leq 1$. Assume that
$\Findim\B=m<\infty$ and let $C$ be an object of $\C$ of finite
projective dimension. Since the injective relative dimension
${\igld}_{\B}{\A}\leq 1$, it follows from the dual of
Proposition~\ref{recollfiniteArelglodimofB} that the functor
$\mi\colon \A\lxr \B$ is a homological embedding and the functor
$\me\colon \B\lxr \C$ preserves injective objects. The latter
condition is equivalent to the functor $\ml\colon \C\lxr \B$ being
exact. Since $\ml$ is exact and preserves projectives, we get that
$\pd_{\C}C=\pd_{\B}\ml(C)\leq \Findim\B=m<\infty$. This implies that
$\Findim\C\leq \Findim\B$ and so we are done.
\end{rem}

We close this subsection with the following consequence of
Theorem~\ref{thmigldfinite} for Artin algebras.

\begin{cor}\textnormal{(Solberg \cite{Solberg})}\label{cor:Solberg}
  Let $\Lambda$ be an Artin algebra and $e$ an idempotent
  element. Then
\[
  \findim\Lambda\leq \findim e\Lambda e+\mathsf{sup}\{\id{_{\Lambda}S}  \ | \ S \ \text{simple} \ \Lambda/\Lambda e\Lambda\text{-module}\}
\] 
\end{cor}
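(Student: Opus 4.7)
The plan is to apply Theorem~\ref{thmigldfinite} to the standard recollement of module categories induced by the idempotent $e$, as described in Example~\ref{examrecolmodcat}:
\[
\xymatrix@C=0.5cm{
\smod{\Lambda/\Lambda e\Lambda} \ar[rrr]^{\inc} &&& \smod{\Lambda} \ar[rrr]^{e(-)} \ar
@/_1.5pc/[lll]_{}  \ar
 @/^1.5pc/[lll]^{} &&& \smod{e\Lambda e}
\ar @/_1.5pc/[lll]_{} \ar
 @/^1.5pc/[lll]^{}
 }
\]
Since $\Lambda$ is an Artin algebra, so is $\Lambda/\Lambda e\Lambda$, hence $\A = \smod \Lambda/\Lambda e\Lambda$ is a finite length category, and $\B = \smod\Lambda$ has projective covers. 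Thus the hypotheses of Theorem~\ref{thmigldfinite} are satisfied, yielding
\[
\findim\Lambda \;\leq\; \findim e\Lambda e \;+\; {\igld}_{\smod\Lambda}\smod{\Lambda/\Lambda e\Lambda}.
\]

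It remains to identify the relative injective global dimension with the supremum of injective dimensions over simple $\Lambda/\Lambda e\Lambda$-modules. Set $s=\sup\{\id{_{\Lambda}S} \mid S \text{ simple } \Lambda/\Lambda e\Lambda\text{-module}\}$. Every simple $\Lambda/\Lambda e\Lambda$-module embeds as $\mi(S)$ in $\smod\Lambda$, so the inequality $s \leq {\igld}_{\smod\Lambda}\smod{\Lambda/\Lambda e\Lambda}$ is immediate. For the reverse inequality, any finitely generated $\Lambda/\Lambda e\Lambda$-module $A$ admits a finite composition series by simple $\Lambda/\Lambda e\Lambda$-modules; a routine induction on the length of this series, using that for any short exact sequence $0 \to A' \to A \to A'' \to 0$ one has $\id_\Lambda A \leq \max\{\id_\Lambda A',\id_\Lambda A''\}$, shows that $\id_\Lambda \mi(A) \leq s$. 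Taking the supremum over $A \in \A$ gives ${\igld}_{\smod\Lambda}\smod{\Lambda/\Lambda e\Lambda} \leq s$, and equality holds.

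Combining the two steps yields the claimed bound. The only nontrivial point is the identification of the relative injective global dimension with the injective dimension of simples, and this is handled by the elementary filtration argument above; no further obstacle is anticipated.
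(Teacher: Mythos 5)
Your proof is correct and follows essentially the same route the paper intends: the corollary is stated as a direct consequence of Theorem~\ref{thmigldfinite} applied to the idempotent recollement of Example~\ref{examrecolmodcat}, and the identification of ${\igld}_{\smod\Lambda}\smod{\Lambda/\Lambda e\Lambda}$ with $\sup\{\id_\Lambda S\}$ via a composition-series induction is exactly the standard bookkeeping one needs. Your verification of the hypotheses ($\A$ of finite length, $\B$ with projective covers) and the filtration argument for the reverse inequality are both sound.
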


\subsection{Vertex removal, eventually homological isomorphism}
Let $(\A,\B,\C)$ be a recollement of abelian categories with enough
injectives and projectives. Recall that the functor
$\me\colon \B\lxr \C$ is a $t$-eventually homological isomorphism for
some $t$ if $(\alpha)$ ${\igld}_{\B}{\A}<\infty$ and $(\beta)$
$\sup \{ \pd_\C \me(P) \mid P \in \Proj \B \} < \infty$, see
Proposition~\ref{prop:algebra-ext-iso}.  We see that this includes the
condition ${\igld}_{\B}{\A}<\infty$ from the previous subsection.
Hence if $\me\colon \B\lxr \C$ is a $t$-eventually homological
isomorphism, there should be an even closer relationship between
finitistic dimension of $\B$ and $\C$.  This is shown in the next
result.

\begin{thm}
\label{thmevenfindim}
Let $(\A, \B, \C)$ be a recollement of abelian categories such that
$\A$ is a finite length category and $\B$ has projective
covers. Assume that the functor $\me\colon \B\lxr \C$ is a
$t$-eventually homological isomorphism. Then the following statements
hold.
\begin{enumerate}[\rm(i)]
\item $\Findim \C \leq \maxx\{\Findim \B, t\}$.
\item $\Findim \B \leq \maxx\{\Findim \C, t\}$. 
\end{enumerate}
In particular,  $\Findim\B$ is
  finite if and only if $\Findim \C$ is finite.
\end{thm}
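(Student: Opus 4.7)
The plan is to invoke Proposition~\ref{prop:algebra-ext-iso} to unpack the $t$-eventually homological isomorphism hypothesis: by that proposition, conditions $(\alpha)$--$(\delta)$ hold, and (by the last paragraph of its statement) each of the dimensions $\igld_\B\A$, $\pgld_\B\A$, $\sup\{\pd_\C\me(P)\mid P\in\Proj\B\}$, $\sup\{\id_\C\me(I)\mid I\in\Inj\B\}$ is at most $t$. These bounds, together with the abstract $\Ext$-isomorphisms $\Ext^n_\B(-,-)\simeq\Ext^n_\C(\me(-),\me(-))$ for $n>t$ supplied by Definition~\ref{defnevenhomiso}, are the only ingredients. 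In each direction the strategy is the same: witness projective dimension by a non-vanishing $\Ext^n$ in one category, transfer the non-vanishing to the other category via the isomorphism (valid since $n>t$), and argue separately that the transferred object has finite projective dimension in the target.

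For (ii), let $B\in\B$ with $\pd_\B B=m<\infty$; assume $m>t$, as the other case is trivial. Taking a finite projective resolution $0\to P_m\to\cdots\to P_0\to B\to 0$ in $\B$ and applying the exact functor $\me$ produces an exact sequence $0\to\me(P_m)\to\cdots\to\me(P_0)\to\me(B)\to 0$ in $\C$. Condition $(\beta)$ gives $\pd_\C\me(P_i)\leq t$ for each $i$, so splicing projective resolutions of the $\me(P_i)$ shows $\pd_\C\me(B)<\infty$, and hence $\pd_\C\me(B)\leq\Findim\C$. Since $\pd_\B B=m$ we can pick $B'\in\B$ with $\Ext^m_\B(B,B')\neq 0$; as $m>t$, the isomorphism forces $\Ext^m_\C(\me(B),\me(B'))\neq 0$, so $m\leq\pd_\C\me(B)\leq\Findim\C$.

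For (i), let $C\in\C$ with $\pd_\C C=n<\infty$; again assume $n>t$. Condition $(\gamma)$ gives $\pgld_\B\A<\infty$, so Lemma~\ref{lempigldfinite}(ii) applies and the functor $\ml$ preserves finiteness of projective dimension; in particular $\pd_\B\ml(C)\leq\Findim\B$. Choose $C'\in\C$ with $\Ext^n_\C(C,C')\neq 0$; using the natural isomorphism $\me\ml\simeq\iden_\C$ from Proposition~\ref{properties}(iv) we rewrite this as $\Ext^n_\C(\me\ml(C),\me\ml(C'))\neq 0$, which in degree $n>t$ transfers to $\Ext^n_\B(\ml(C),\ml(C'))\neq 0$. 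Hence $n\leq\pd_\B\ml(C)\leq\Findim\B$. The ``in particular'' statement follows at once from (i) and (ii).

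The main obstacle is direction (ii): in (i) the natural identification $\me\ml\simeq\iden_\C$ makes the transfer of a non-vanishing $\Ext$ almost automatic, whereas in (ii) the non-trivial content is showing that $\me(B)$ has finite projective dimension in $\C$ at all. This is precisely where condition $(\beta)$ is indispensable; without it, the finite resolution of $B$ by $\B$-projectives would map under $\me$ to an exact sequence whose terms could have arbitrarily large $\C$-projective dimension, and no $\Findim\C$-bound on $\pd_\C\me(B)$ could be extracted.
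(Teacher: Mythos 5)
Your proof is correct, and its overall architecture matches the paper's: in both directions one needs the Ext-isomorphism in degrees $>t$, finiteness of the projective dimension of the transported object ($\me(B)$ for (ii), $\ml(C)$ for (i)), and the identification $\me\ml\simeq\iden_\C$ for (i). However, you take a detour for the two ``finiteness'' steps that the paper avoids. To show $\pd_\C\me(B)<\infty$ you route through condition $(\beta)$ of Proposition~\ref{prop:algebra-ext-iso} together with splicing projective resolutions; to show $\pd_\B\ml(C)<\infty$ you route through condition $(\gamma)$ and Lemma~\ref{lempigldfinite}. The paper reads both facts directly from the Ext-isomorphism itself: since $\Ext^n_\B(B,B')\simeq\Ext^n_\C(\me(B),\me(B'))$ for $n>t$ and $\me$ is essentially surjective, $\pd_\B B<\infty$ already forces $\Ext^n_\C(\me(B),-)=0$ for $n>\max\{\pd_\B B,t\}$, hence $\pd_\C\me(B)<\infty$ with no appeal to $(\beta)$; and likewise for $\ml(C)$, using $\Ext^n_\B(\ml(C),B')\simeq\Ext^n_\C(C,\me(B'))$. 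So your closing remark that condition $(\beta)$ is ``indispensable'' for (ii) is accurate only for the route you chose, not for the theorem — the Ext-isomorphism alone already carries the finiteness. Your proof is nonetheless valid; the main things it buys are that you never invoke $(\beta)$ or $(\gamma)$ as inequalities beyond finiteness, and you make explicit the witness argument (a non-vanishing top $\Ext$) whereas the paper phrases things as a vanishing bound — these are equivalent. A further cosmetic difference is that you case-split on $m\le t$ versus $m>t$, while the paper's $\max$-formulation absorbs both cases at once.
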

\begin{proof}
  By assumption there is an isomorphism
  $\Ext^n_{\B}(B,B')\simeq \Ext^n_{\C}(\me(B),\me(B'))$ for every $n>t$
  and for all objects $B$, $B'$ in $\B$.

  (i) Assume that the finitistic dimension of $\B$ is finite. Let $Y$
  an object in $\C$ of finite projective dimension. By Proposition
  \ref{properties} (iv) and Proposition~\ref{prop:algebra-ext-iso}, we
  have
\begin{equation}
\label{isomeventhomol}
{\Ext}^n_{\B}(\ml(Y), B')\simeq {\Ext}^n_{\C}(Y,\me(B'))
\end{equation}
for all integers $n > t$ and for all objects $B'$ in $\B$.  Since
the functor $\me$ is essentially surjective by Proposition
\ref{properties} (iii), it follows from $(\ref{isomeventhomol})$ that
$\pd_\B \ml(Y) < \infty$ and in particular
$\pd_\B\ml(Y) \leq \Findim\B$. From the isomorphism
\eqref{isomeventhomol} it follows that
\[
\pd_{\C}Y\leq \maxx\{\Findim\B,t \}.
\]
We infer that $\Findim\B \leq \maxx\{\Findim \C, t\}$.

(ii) Assume that the finitistic dimension of $\C$ is
finite. Let $B$ be an object in $\B$ of finite projective
dimension. Then for any object $B'$ in $\B$ we have the 
isomorphism
${\Ext}^n_{\B}(B, B')\simeq {\Ext}^n_{\C}(\me(B),\me(B'))$ for
every $n>t$. As above, we obtain that
$\pd_{\C}\me(B)\leq \Findim \C$ and therefore
\[
\pd_{\B}B\leq \maxx\{\Findim \C, t \}
\]
Hence, we conclude that $\Findim\B \leq \max\{\Findim \C,
t\}$. 

The last claim follows directly from (i) and (ii). 
\end{proof}

As a consequence of Theorem~\ref{thmevenfindim} we have the following
result for Artin algebras. 

\begin{cor}
\label{corehifindim}
Let $\Lambda$ be an artin algebra with an idempotent $e$.  Assume
  that the functor $e\colon \smod\Lambda\lxr \smod e\Lambda e$ is a
  $t$-eventually homological isomorphism. Then the following statements hold. 
\begin{enumerate}[\rm(i)]
\item $\findim e\Lambda e \leq \maxx\{\findim\Lambda, t\}$.
\item $\findim \Lambda \leq \maxx\{\findim e\Lambda e, t\}$. 
\end{enumerate}
In particular,  $\findim\Lambda$ is
  finite if and only if $\findim e\Lambda e$ is finite.
\end{cor}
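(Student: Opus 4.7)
The plan is to realize this as a direct application of Theorem~\ref{thmevenfindim} to the recollement of module categories associated with the idempotent $e \in \Lambda$. The essential content is already packaged in Theorem~\ref{thmevenfindim}; what remains is to verify that the module-theoretic setting fits the abstract hypotheses.

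First, I would invoke Example~\ref{examrecolmodcat}: restricting the recollement there to finitely presented modules (which is legitimate because $\Lambda$ is an Artin algebra, so $\Lambda/\Lambda e\Lambda$ and $e\Lambda e$ are Artin algebras too, and the six functors preserve finite presentation in this setting) yields a recollement of abelian categories
\[
(\A,\B,\C) = \bigl(\smod{\Lambda/\Lambda e\Lambda},\ \smod\Lambda,\ \smod{e\Lambda e}\bigr),
\]
in which the quotient functor $\me\colon \B\lxr \C$ is precisely the multiplication functor $e(-)$.

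Next I would verify the two standing hypotheses of Theorem~\ref{thmevenfindim}. Since $\Lambda/\Lambda e\Lambda$ is again an Artin algebra, every finitely presented right $\Lambda/\Lambda e\Lambda$-module has finite composition length, so $\A = \smod{\Lambda/\Lambda e\Lambda}$ is a finite length category. Also, $\B = \smod\Lambda$ has projective covers by the classical theory of Artin algebras (every finitely presented module over a semiperfect ring admits a projective cover). The remaining hypothesis---that $\me$ is a $t$-eventually homological isomorphism---is exactly our assumption on $e(-)$.

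With all hypotheses satisfied, Theorem~\ref{thmevenfindim} yields $\Findim\C \leq \maxx\{\Findim\B, t\}$ and $\Findim\B \leq \maxx\{\Findim\C, t\}$. Translating back into algebra notation gives $\findim{e\Lambda e} \leq \maxx\{\findim\Lambda, t\}$ and $\findim\Lambda \leq \maxx\{\findim{e\Lambda e}, t\}$, which are exactly statements (i) and (ii); the final equivalence is then immediate. There is no real obstacle here---the whole argument is a matter of bookkeeping, and the only small point to be careful about is checking that the recollement of \emph{big} module categories from Example~\ref{examrecolmodcat} restricts correctly to the finitely presented setting, which for Artin algebras is standard.
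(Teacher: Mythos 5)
Your argument is exactly what the paper intends: Corollary~\ref{corehifindim} is stated as an immediate consequence of Theorem~\ref{thmevenfindim} applied to the recollement $(\smod{\Lambda/\Lambda e\Lambda}, \smod\Lambda, \smod{e\Lambda e})$ from Example~\ref{examrecolmodcat}, and your verification that $\smod{\Lambda/\Lambda e\Lambda}$ is a finite length category and that $\smod\Lambda$ has projective covers is the right bookkeeping. No gap; same approach as the paper.
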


Using Corollary~\ref{corehifindim} we can reprove
Corollary~\ref{cor:recollfpd-projdim} in the following way. Assume
that $\pd_\Lambda (1-e)\Lambda/ (1-e)\rad(\Lambda) \leqslant 1$. By
Proposition~\ref{recollfiniteArelglodimofB} the inclusion functor
$\inc\colon \smod\Lambda/\Lambda e\Lambda\lxr \smod \Lambda$ is a
homological embedding and the quotient functor
$e\colon \smod\Lambda\lxr \smod e\Lambda e$ preserves
projectives. Hence, from Proposition~\ref{prop:algebra-ext-iso} we
have the properties $(\beta)$ and $(\gamma)$ but in general this is
not enough to obtain an eventually homological isomorphism. However,
it follows from Lemma~$8.9$ and Corollary~$8.8$ in \cite{PSS} that the
functor $e$ is an eventually homological isomorphism and therefore we
can apply Corollary~\ref{corehifindim}.

\begin{rem}
  All the reductions that we have discussed give at least one thing,
  depending on your point of view.  If you believe the finitistic
  dimension conjecture is true, then you ``only'' need to prove it for
  $\Lambda = kQ/I$ where $Q$ is path connected, all simple modules
  have infinite injective dimension and projective dimension at least
  $2$, and all arrows occur in a given minimal generating set for the
  relations.  If you believe the finitistic dimension conjecture is
  false, then you ``only'' need to search for/find a counter example
  with all the properties just given.
\end{rem}

\section{Examples}
\label{sectionexamples}

This section is devoted to giving examples illustrating the reduction
techniques we have discussed in the previous sections.  In this context we introduce the following notion.

\begin{defn}
\label{defn:reduced}
An algebra in called {\bf reduced} if 
\begin{enumerate}[\rm(a)]
\item all arrows occur in some relation in a minimal set of relations,  
\item all simple modules have infinite injective dimension and
projective dimension at least $2$, 
\item no triangular reductions are possible.
\end{enumerate}
\end{defn}

The first two examples are not possible to reduce using the earlier
known techniques with vertex reduction for vertices corresponding to
simple modules of projective dimension at most $1$ or triangular
reduction.  In these examples one must use the new techniques with
vertex reduction for vertices corresponding to simple modules of
finite injective dimension or arrow removal, respectively.  Reduced
algebras $\Lambda_n$ for all positive integers $n$ are constructed
with the finitistic dimension equal to $n$.  This shows that the
finitistic dimension can be arbitrary for a reduced algebra.  Then,
all the rest of the examples are from the existing literature, and
they are shown to be reducible using our techniques.  In most cases a
bound for the finitistic dimension of the algebra is given.

In all the examples the algebras $\Lambda$ are given by quivers and
relations.  Then denote by $S_i$ the simple module associated to
vertex number $i$ and by $e_i$ the corresponding primitive idempotent.
Furthermore, $P_i$ denotes the indecomposable projective modules
$e_i\Lambda$.

First we give two examples which can only be reduced by applying the
new reduction techniques introduced in this paper, namely vertex
removal corresponding to a simple module with finite injective
dimension and arrow removal.

\begin{exam}
  Let $\Lambda$ be given as $k\Gamma/I$ for a field $k$, where
  $\Gamma$ is the quiver
\[\xymatrix{
&  & 1\ar[dl]^a\ar[dr]_b & & \\
8\ar[urr]^k\ar@(ul,dl)_j &  2\ar[dr]^c & & 3\ar[dl]_d\ar[dr]_e & \\
7\ar[u]_i&  & 4\ar[dr]^f & & 5\ar[dl]_g \\
&                &                 & 6\ar[ulll]^h &  
}\]
and
$I=\langle ac-bd, be, cf, df-eg, fh, egh, ghi, hik, ij, ikb, j^2, jkb,
ka, kbd\rangle$.  One can show that (i) all simple $\Lambda$-modules
have infinite injective dimension, except $S_7$, which has injective
dimension $3$, (ii) all simple $\Lambda$-modules have (here in fact
infinite) projective dimension at least $2$, (iii) no triangular
reductions are possible and (iv) no arrow removal is possible.
Therefore the only available reduction is vertex removal corresponding
to vertex $7$, where the simple module has injective dimension $3$. 

The finitistic dimension is $\Lambda$ is $2$, while the finitistic
dimension of $\Lambda^\op$ is $4$.
\end{exam}

In the next example only arrow removal is possible. 
\begin{exam}
Let $\Lambda$ be given as $k\Gamma/I$ for a field $k$, where $\Gamma$
is 
\[\xymatrix{
6\ar[rr]^g && 1\ar[dl]_a\ar[dr]^b & \\
                & 2\ar[dr]_c & & 3\ar[dl]^d\\
5\ar@<1ex>[uu]^{f_1}\ar@<-1ex>[uu]_{f_2} & & 4\ar[ll]^e & }\]
and $I = \langle ac - bd, cef_1, de, ef_1g, f_1gb, ga\rangle$.  Then
all the simple modules are either $\Omega$-periodic
($\Omega^{-1}$-periodic) or eventually $\Omega$-periodic
($\Omega^{-1}$-periodic) of periode $11$.  Hence all the simple
modules have infinite injective and infinite projective dimension and
the algebra is reduced with respect to vertex removal.  Furthermore,
the algebra is triangular reduced.  Therefore the only possible
reduction we can perform is arrow removal as $f_2$ is not occurring in
any relations.

By Theorem \ref{thm:summarycleftext} the inequality
$\findim\Lambda \leq \max\{\findim\Lambda/\langle f_2\rangle, 1\}$ is true.
Since $\findim\Lambda/\langle f_2\rangle = 1$, it follows that
$\findim\Lambda \leq 1$.
\end{exam}

Next we construct algebras $\Lambda_n$ for all positive integers $n$,
which are reduced with $\findim \Lambda_n = n$.  Hence, a reduced
algebra can have arbitrarily large finitistic dimension. 

\begin{exam}
\label{exam:Koszul}
Let $\Lambda_n$ be given as $k\Gamma_n/I_n$, where $\Gamma_n$ is given
by
\[\xymatrix{
 && 1\ar[dl]_{a_1}\ar[dr]^{b_1} & & & & & \\
                & 2\ar[dr]_{c_1} & & 3\ar[dl]^{d_1=a_2}\ar[dr]^{b_2} & & & &\\
 & & 4\ar[dr]_{c_2} & & 5\ar[dl]^{d_2=a_3}\ar@{..}[ddrr] & & & \\
 & & & 6 \ar@{..}[ddrr] & & & & \\
 & & & & & & 2n-1\ar[dl]_{d_{n-1}=a_{n}}\ar[dr]^{b_{n}} & \\
 & & & & & 2n\ar[dr]_{c_{n}} & & 2n+1\ar[dl]^{d_{n}}\\
 & & & & & & 2n+2\ar@/^9pc/[uuuuuullll]^e &
}\]
and the ideal $I_n$ in $k\Gamma_n$ is generated by
$\{a_ic_i - b_id_i\}_{i=1}^n$, $\{b_ib_{i+1}\}_{i=1}^{n-1}$,
$\{c_ic_{i+1}\}_{i=1}^{n-1}$ and $\{c_ne, d_ne, ea_1, eb_1\}$.   All
the algebras $\Lambda_n$ are reduced, Koszul and of finite representation
type.  Since $\fpd\Lambda_n = n $, a reduced algebra can
have arbitrarily large finitistic dimension.
\end{exam}

All the remaining examples are taken from the existing literature.  We
shall see that all of them can be reduced using one of our reduction
techniques. 
\begin{exam}\label{exam:example2}
This example is Example 1 from \cite{Xi:FinDimConjI}. Define $\Lambda$
by the following quiver and relations over a field $k$.
\[\Lambda = k\left(\vcenter{\xymatrix{
   & 1\ar@<1ex>[dl]^\beta\ar@<-1ex>[dl]_\gamma\ar[dr]^\eta & & \\
2\ar@(ul,dl)_\alpha \ar[dr]_\delta & & 3\ar[dl]^\xi & 5\ar[l]\\
& 4 & & }}\right)/\langle \alpha^3, \alpha\delta, \beta\delta, \eta\xi - \gamma\delta\rangle\]
The injective and the projective dimensions of the
simple $\Lambda$-modules are given as follows.
\begin{center}
\begin{tabular}{|c|c|c|c|c|c|}\hline
 & $S_1$ & $S_2$ & $S_3$ & $S_4$ & $S_5$ \\ \hline
$\pd$ & 2 & $\infty$ & 1 & 0 & 1\\ \hline
$\id$ & 0& $\infty$ & 1 & $\infty$ & 0\\\hline
\end{tabular}
\end{center}
Using that the projective dimension of the simple modules $\{ S_3,
S_4, S_5\}$ are at most $1$, we can remove the vertices $\{3,4,5\}$
and obtain the algebra 
\[\Lambda_1 = k\left(\xymatrix{2\ar@(ul,dl)_\alpha &
 1\ar@<1ex>[l]^\beta\ar@<-1ex>[l]_\gamma }\right)/\langle\alpha^3\rangle.\]
Which can be further reduced to the local algebra $e_2\Lambda_1 e_2$,
since $S_1$ is injective.

Alternatively, using that the injective dimension of the simple
modules $\{ S_1, S_3, S_5\}$ are at most $1$ in the original algebra,
the vertices $\{1,3,5\}$ can be removed and we obtain the algebra
\[\Lambda_2 =
k\left(\xymatrix{2\ar@(ul,dl)_\alpha\ar[r]^\delta  &
    4}\right)/\langle\alpha^3, \alpha\delta\rangle.\] 
Which can be further reduced to the local algebra $e_2\Lambda_2 e_2$,
since $S_4$ is projective.  Independent of which reduction we carry
out, the original algebra has indeed finite finitistic dimension.  In
addition, whatever order we carry out the reductions, we end up with
the same algebra up to isomorphism.

Using Theorem \ref{thmfindimpgld} and Theorem \ref{thmigldfinite} we
obtain that $\fpd\Lambda \leq 3$.

In addition, applying triangular reduction with $e = e_3 + e_4 + e_5$
we obtain 
\[\fpd \Lambda \leq 1 + \fpd e\Lambda e + \fpd(1 - e)\Lambda(1
  -e)\]
where $(1 - e)\Lambda (1 - e) = \Lambda_1$ and $e\Lambda e$ is
hereditary.  We can again reduce $\Lambda_1$ by triangular reduction
and get $\fpd \Lambda_1 \leq 1 + \fpd k  + \fpd k[x]/\langle
x^3\rangle = 1$.  Collecting these observations we obtain, as above,
$\fpd\Lambda \leq 3$. 

Example $2$ from \cite{Xi:FinDimConjII} is very similar, and it can be
reduced in a similar fashion.  Example $2$ from
\cite{Xi:FinDimConjIII} is different, but it admits similar
reductions. 
\end{exam}

\begin{exam}
This is Example 1 from \cite{Xi:FinDimConjII}.   Define $\Lambda$ by
the following quiver and relations over a field $k$. 
\[\Lambda = k\left(\vcenter{\xymatrix{
 & 3\ar@<-1ex>[d]_\beta & \\
2\ar@(ul,dl)_\tau &
1\ar@<-1ex>[l]_\alpha\ar@<1ex>[l]^\xi\ar@<-1ex>[u]_\delta
\ar@<-1ex>[r]_\psi \ar@<-1ex>[d]_\eta &
4\ar@<-1ex>[l]_\varphi\ar@(ur,dr)^\epsilon \\
& 5\ar@<-1ex>[u]_\gamma & 
}}\right)/
\big\langle 
\begin{matrix}
\delta\beta - \eta\gamma, \varphi\delta, \varphi\eta,
\varphi\psi, \epsilon^2, \psi\epsilon, \varphi\alpha, \varphi\xi,\\
\beta\eta, \beta\psi, \beta\delta, \gamma\delta, \gamma\psi,
\gamma\eta, \tau^2, \alpha\tau, \xi\tau
\end{matrix}\big\rangle
\]
Here all simple modules have infinite injective dimension and
projective dimension at least $2$, and all arrows occur in some
generator of a minimal set of generators for the relations.  But
$e_2\Lambda (1 - e_2) = (0)$, so that we can perform a triangular
reduction and obtain
\[\fpd \Lambda \leq 1 + \fpd e_2\Lambda e_2 + \fpd (1 - e_2)\Lambda (1
  - e_2) = 1 + \fpd (1 - e_2)\Lambda (1 - e_2),\]
since $e_2\Lambda e_2$ is a local algebra. The algebra
$(1 - e_2)\Lambda(1 - e_2)$ is reduced. 
\end{exam}

\begin{exam}
This example is Example 4.4 from
\cite{Ful-Sao:FinDimConjArtRings}. Define $\Lambda$ 
by the following quiver and relations over a field $k$.
\[\Lambda = k\left(\vcenter{\xymatrix{
1\ar@/^/[rr]^\alpha\ar[dr]_\beta &  & 2\ar@/^/[ll]^\gamma\ar@/_/[dl]_(0.7)\delta\\
& 3\ar@/_/[ur]_\epsilon & }}\right)/\langle \alpha\delta, \alpha\gamma,
\beta\epsilon\gamma, \beta\epsilon\delta\epsilon, \gamma\alpha -
\delta\epsilon, \gamma\beta, \delta\epsilon\delta,
\epsilon\gamma\beta\rangle\]
This algebra is triangular reduced, so the only possible reductions
are vertex removal or arrow removal.  All arrows occur in a minimal
set of relations, so this only leaves us with vertex removal
reduction. 

The injective and the projective dimensions of the simple
$\Lambda$-modules are given as follows. 
\begin{center}
\begin{tabular}{|c|c|c|c|}\hline
 & $S_1$ & $S_2$ & $S_3$ \\ \hline
 $\pd$ & $\infty$ & $\infty$ & 1\\ \hline 
 $\id$ & $\infty$ & 3 & $\infty$\\\hline
\end{tabular}
\end{center}
Using that the projective dimension of the simple module $S_3$ is $1$,
the vertex $3$ can be removed to obtain the algebra
\[\Lambda_1 = k\left(\vcenter{\xymatrix{ 1\ar[r]^{a_\alpha} \ar@/^1pc/[r]^{a_{\beta\epsilon}} &
      2\ar@/^/[l]^{a_{\gamma}}  }}\right)/\langle
a_\alpha a_\gamma, a_\gamma a_{\beta\epsilon}, a_{\beta\epsilon}a_\gamma\rangle.\]  
The indices $\sigma$ on $a_\sigma$ for the arrows in the quiver of $\Lambda_1$ and later $\Lambda_2$ refer to which basis elements in $\Lambda$ they correspond to.  This is the opposite
of the algebra in Example 3 in
\cite{Xi:FinDimConjII}, which originally appeared in
\cite{Igu-Sma-Tod:FinProjConFin}.  The algebra $\Lambda_1$ cannot be
reduced further.  

Alternatively, using that the injective dimension of the simple $\Lambda$-module
$S_2$ is $3$, the vertex $2$ can be removed to obtain the algebra
\[\Lambda_2 =
k\left(\vcenter{\xymatrix{1\ar@/^/[r]^{a_\beta} &
      3\ar@(ur,dr)^{a_{\epsilon\delta}}\ar@/^/[l]^{a_{\epsilon\gamma}}}}\right)/\langle a_\beta a_{\epsilon\gamma}, a_{\epsilon\delta}a_{\epsilon\delta}, a_{\epsilon\gamma}a_\beta, a_{\epsilon\delta}a_{\epsilon\gamma}\rangle.\]  
The algebra $\Lambda_2$ cannot be reduced further.  Hence, different
paths of reduction to a reduced algebra, do not give a unique algebra
up to isomorphism.

As $\Lambda_1$ and also $\Lambda_2$ are monomial algebras, it follows
easily from each of the different reductions that $\Lambda$ has finite
finitistic dimension.  One can show that no indecomposable projective
module can be a submodule of a finitely generated projective module
without being a direct summand.  Hence, the finitistic dimension of
$\Lambda_2$ is $0$, so that $\fpd\Lambda \leq 3$ by Corollary
\ref{cor:Solberg}.
\end{exam}

Not all examples in the existing literature are reducible, for
instance, the examples \cite[example at the end of Section 3]{Wang-Xi}
and \cite[Example 3.1 and 3.2]{Xu} seem to be reduced.

\appendix
\section{Gr\"obner basis}
In this appendix we recall some basic facts about Gr\"obner basis that
we use in Section \ref{sec:cleft-findim}.  For further details and a proper
introduction to the Gr\"obner basis we refer the reader to
\cite{Green}.  

Let $Q$ be a quiver and $kQ$ the corresponding path algebra over a
field $k$.  Then the set $\mathcal{B}$  of all the paths in $Q$ is a
$k$-basis of the path algebra $kQ$.  Gr\"obner basis theory is based
on having a totally ordered basis with special properties, namely an 
admissible ordering of the basis.  For a path algebra $kQ$ we obtain
such a basis by giving $\mathcal{B}$ the length-left-lexicographic
ordering $\succ$ by describing a total order on all vertices and a total order
on all arrows with all arrows bigger than any vertex.  Then, given any
non-zero element
\[r = \sum_{p\in \mathcal{B}} a_pp\] 
in $kQ$ with almost all $a_p$ in $k$ being zero and $p$ in
$\mathcal{B}$, we define the \textbf{tip of $r$} to be the element
$\Tip(r)=p$ if $a_p\neq 0$ and $p\succ q$ for all $q$ with
$a_q \neq 0$.  For any subset $X$ in $kQ$, then the \textbf{set of
  tips of $X$} is given as 
\[\Tip(X) = \{\Tip(r)\mid r \in X\setminus \{0\}\}\]
and the \textbf{set of non-tips of $X$} is given as 
\[\Nontip(X) = \mathcal{B}\setminus \Tip(X).\]
Having the notions of tips and non-tips of a set give rise to the
following fundamental result (see \cite[Lemma 5.1]{Green} for (i) and 
\cite[Theorem 2.1]{Green2} for (ii) and (iii)). 
\begin{lem}\label{lem:normalform}
 Let $I$ be an ideal in $kQ$ with $\succ$ an admissible ordering of
  the $k$-basis $\mathcal{B}$ consisting of all paths in $Q$.
\begin{enumerate}[\rm(i)]
\item For two paths $p$ and $p'$ in $Q$ and an element $x$ in $kQ$
  such that $pxp'$ is non-zero, the tip of $pxp'$ is 
\[\Tip(pxp') = p\Tip(x)p'.\]
\item As a $k$-vector space $kQ$ can be decomposed as
\[kQ = I \oplus \Span_k(\Nontip(I)),\]
where $\Span_k(\Nontip(I))$ denotes the $k$-linear span of
$\Nontip(I)$.  
\item As a $k$-vector space $kQ/I$ can be
indentified with $\Span_k(\Nontip(I))$.  In particular, any
elemenet $r + I$ in $kQ/I$ can be represent uniquely by $N(r)$ as
\[r  + I= N(r) + I,\]
where $N(r)$ is called the \textbf{normal form of $r$} and
  $N(r)\in\Span_k(\Nontip(I))$. 
\end{enumerate}
\end{lem}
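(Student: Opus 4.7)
The plan is to establish the three parts in sequence, using standard arguments from Gr\"obner basis theory.

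For part (i) I would deduce the statement directly from the defining property of admissibility of $\succ$: namely, if $q_1 \succ q_2$ are paths in $\mathcal{B}$ and $p, p'$ are paths in $Q$ with both $pq_1p'$ and $pq_2p'$ nonzero, then $pq_1p' \succ pq_2p'$. Writing $x = \sum_{q\in\mathcal{B}} a_q q$ and setting $q_0 = \Tip(x)$, the product expands as $pxp' = \sum_q a_q\, (pqp')$, where each nonzero summand is again a basis path and distinct summands give distinct paths. Since $pxp' \neq 0$, at least one such summand is nonzero; a brief source-target bookkeeping argument on composable paths shows that $pq_0p'$ is in fact nonzero, and admissibility of $\succ$ then forces $pq_0 p'$ to dominate all other surviving terms, so $\Tip(pxp') = p\Tip(x)p'$.

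Part (ii) splits into two assertions. First, $I \cap \Span_k(\Nontip(I)) = 0$: if $0 \neq r$ lies in the intersection, then $\Tip(r) \in \Tip(I)$ because $r \in I$, yet $\Tip(r) \in \Nontip(I)$ because $r$ is a $k$-linear combination of non-tips, contradicting the definition $\Nontip(I) = \mathcal{B} \setminus \Tip(I)$. Second, $I + \Span_k(\Nontip(I)) = kQ$: given any $r \in kQ$, I would run the standard reduction algorithm. If $\Tip(r) \in \Nontip(I)$, subtract off its leading monomial (a contribution to the non-tip part) and continue with the remainder; if $\Tip(r) \in \Tip(I)$, choose $g \in I$ with $\Tip(g) = \Tip(r)$ and subtract an appropriate scalar multiple of $g$ (a contribution to the $I$-part) to annihilate the tip. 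In either case the new element has strictly smaller tip, and when the ordering is restricted to basis paths dominated by a fixed path it is well-founded, so the procedure terminates and produces the required decomposition.

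Part (iii) is then a formal consequence of (ii): the decomposition $kQ = I \oplus \Span_k(\Nontip(I))$ identifies $kQ/I$ as a $k$-vector space with $\Span_k(\Nontip(I))$ via the projection along $I$, and the image $N(r)$ of $r$ under this projection is by construction the unique element of $\Span_k(\Nontip(I))$ satisfying $r + I = N(r) + I$. The main technical point is the termination of the reduction algorithm in (ii), which rests on the well-foundedness of the length-left-lexicographic ordering on $\mathcal{B}$ below a given path; once this is in place, parts (i) and (iii) follow essentially by bookkeeping.
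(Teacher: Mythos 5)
The paper itself does not prove this lemma; it cites Lemma~5.1 of \cite{Green} for (i) and Theorem~2.1 of \cite{Green2} for (ii) and (iii), so there is no in-text proof to compare against, and you are reconstructing the standard Gr\"obner basis argument. Your treatment of (ii) and (iii) is correct and complete: the intersection $I\cap\Span_k(\Nontip(I))=0$ follows from the tip argument you give, surjectivity of the sum follows from the reduction process, termination holds because in the length-left-lexicographic order there are only finitely many paths below any fixed path, and (iii) is the immediate corollary of the direct sum decomposition.

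There is, however, a gap in your argument for (i). The claim that ``a brief source-target bookkeeping argument on composable paths shows that $pq_0p'$ is in fact nonzero'' (where $q_0=\Tip(x)$) is not true for arbitrary $x$: if the summands of $x$ have distinct sources or targets, the tip $q_0$ may fail to be composable with $p$ or $p'$ even though some other summand is. Concretely, take a quiver with arrows $a\colon 1\to 2$ and $b\colon 1\to 3$, with $a\succ b$; set $x=a+b$, let $p$ be the trivial path at vertex $1$, and let $p'$ be the trivial path at vertex $3$. Then $\Tip(x)=a$ but $pxp'=b\neq 0$, whereas $p\,\Tip(x)\,p'=0$, so the asserted identity fails. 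Your argument (and, strictly speaking, the statement of the lemma as reproduced) implicitly requires $x$ to be \emph{uniform}, i.e.\ $x=exf$ for trivial paths $e,f$; under that hypothesis every summand of $x$ has the same source and target, so $pxp'\neq 0$ forces $pqp'\neq 0$ for every summand $q$ of $x$, and the remainder of your argument (injectivity of $q\mapsto pqp'$ on paths plus compatibility of $\succ$ with left and right multiplication) is correct. You should invoke uniformity explicitly rather than appeal to unspecified bookkeeping; this is harmless for the paper's applications, where the elements encountered are always uniform, but it is a genuine hypothesis that the step cannot do without.
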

A Gr\"obner basis of the ideal $I$ facilitates a way of
computing the normal form of any element in $kQ$.  A Gr\"obner basis
of an ideal in $kQ$ is defined as follows. 
\begin{defn}\label{defn:Groebner}
  Let $I$ be an ideal in $kQ$ with $\succ$ an admissible ordering of
  the $k$-basis $\mathcal{B}$ consisting of all paths in $Q$.  If
  $\mathcal{G}$ is a subset of $I$, then $\mathcal{G}$ is a Gr\"obner
  basis for $I$ with respect to $\succ$ if the ideal generated by
  $\Tip(\mathcal{G})$ equals the ideal generated by $\Tip(I)$.
\end{defn}
Having a finite Gr\"obner basis $\mathcal{G}$ for an ideal $I$ in $kQ$
with an admissible ordering $\succ$, gives a finite algorithm for
computing the normal form of any element in $kQ$ by iteratively
applying of the reduction described in statement (iii) of the result
below.  
\begin{lem}\label{lem:existencegroebner}
  Let $I$ be an admissible ideal in $kQ$ with $\succ$ an admissible
  ordering of the $k$-basis $\mathcal{B}$ consisting of all paths in
  $Q$.  
\begin{enumerate}[\rm(i)]
\item Then there exists a finite Gr\"obner basis $\mathcal{G}$ of $I$
  in $kQ$. 
\end{enumerate}
For a Gr\"obner basis $\mathcal{G}$ of $I$ in $kQ$ and any non-zero
  element $x$ in $I$, the following hold.
\begin{enumerate}[\rm(i)]
\setcounter{enumi}{1}
\item The element $\Tip(x)$ is in $\langle \Tip(\mathcal{G}) \rangle$.
\item There exist $c$ in $k$, paths $p$ and $p'$ in $Q$ and $g$ in $\mathcal{G}$
  such that  
\[\Tip(x) = p\Tip(g)p'\]
and
\[\Tip(x - cpgp') \prec \Tip(x),\]
whenever $x - cpgp' \neq 0$. 
\end{enumerate}
\end{lem}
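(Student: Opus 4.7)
The plan is to handle the three parts in order, with (i) being the only one that requires a real argument; (ii) and (iii) are essentially bookkeeping on the definition of a Gröbner basis together with the admissibility of the ordering.

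For part (i), I would proceed as follows. Consider the monomial ideal $\langle \Tip(I)\rangle$ in $kQ$. Because the ideal $I$ is admissible, there is an integer $n$ with $J^n\subseteq I$, where $J$ is the arrow ideal; consequently every path of length at least $n$ lies in $\Tip(I)$. I then look at the divisibility preorder on paths (``$q\mid q'$'' if $q' = p_1 q p_2$ for some paths $p_1,p_2$) restricted to $\Tip(I)$, and take a \emph{minimal} generating set $T\subseteq \Tip(I)$ with respect to this preorder. The key observation is that $T$ is finite: every element of $T$ has length less than $n$ (otherwise an element of $J^n\subseteq \Tip(I)$ would strictly divide it), and there are only finitely many paths in $Q$ of length less than $n$ since $Q$ is finite. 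For each $t\in T$ choose $g_t\in I$ with $\Tip(g_t)=t$; setting $\mathcal{G}=\{g_t\}_{t\in T}$ gives a finite subset of $I$ with $\langle\Tip(\mathcal G)\rangle = \langle T\rangle = \langle\Tip(I)\rangle$, which is the definition of a Gröbner basis.

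For part (ii), let $\mathcal{G}$ be any Gröbner basis for $I$ and $x\in I$ nonzero. By Definition~\ref{defn:Groebner} we have $\langle\Tip(\mathcal{G})\rangle = \langle\Tip(I)\rangle$, and since $\Tip(x)\in\Tip(I)\subseteq\langle\Tip(I)\rangle$ we immediately get $\Tip(x)\in\langle\Tip(\mathcal G)\rangle$, as claimed.

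For part (iii), use (ii) to write $\Tip(x)$ as an element of the monomial ideal $\langle\Tip(\mathcal G)\rangle$. Since this ideal is generated by paths, one can factor $\Tip(x) = p\,\Tip(g)\,p'$ for some $g\in\mathcal G$ and paths $p,p'$ in $Q$. Let $\alpha$ and $\beta$ be the coefficients of $\Tip(x)$ in $x$ and of $\Tip(g)$ in $g$ respectively, and set $c = \alpha\beta^{-1}$. Then by Lemma~\ref{lem:normalform}(i), $\Tip(cpgp') = p\Tip(g)p' = \Tip(x)$ with coefficient $\alpha$. Thus the $\Tip(x)$-term cancels in $x - cpgp'$, and every remaining term of $x$ is (by definition) $\prec \Tip(x)$, while every remaining term of $cpgp'$ is of the form $p\,q\,p'$ with $q$ a path appearing in $g$ satisfying $q\prec\Tip(g)$; admissibility of the length-left-lexicographic ordering then yields $p q p' \prec p\Tip(g)p' = \Tip(x)$. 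Hence, provided $x-cpgp'\neq 0$, we have $\Tip(x-cpgp')\prec\Tip(x)$.

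The only nontrivial step is the finiteness in (i), and the essential input there is the admissibility of $I$ (which forces $\Tip(I)$ to contain all sufficiently long paths and so makes the minimal divisors of $\Tip(I)$ lie in a finite set). Everything else reduces to the definitions and to Lemma~\ref{lem:normalform}(i).
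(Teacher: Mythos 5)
Your proposal is correct. The paper itself does not give a proof — it defers part (i) and the division algorithm underlying part (iii) to Green's work and says part (ii) is immediate from Definition~\ref{defn:Groebner} — so your argument fills in details the paper leaves to the references. For (i), your reduction to the finiteness of the set of divisibility-minimal elements of $\Tip(I)$, using that $I$ admissible forces $J^n\subseteq I$ and hence all paths of length at least $n$ to lie in $\Tip(I)$, is a clean self-contained argument tailored to the admissible case (the reference proves a broader statement). One small slip: a divisibility-minimal element of $\Tip(I)$ can have length exactly $n$, so the parenthetical justification (``an element of $J^n$ would strictly divide it'') only rules out lengths strictly greater than $n$; the bound should read ``length $\leq n$'' rather than ``$< n$''. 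Since there are still only finitely many paths of length at most $n$ in a finite quiver, this does not affect the finiteness conclusion. Parts (ii) and (iii) are exactly the bookkeeping the paper indicates: for (iii) you correctly observe that a single path lying in the monomial ideal $\langle\Tip(\mathcal G)\rangle$ must itself be of the form $p\,\Tip(g)\,p'$, choose $c$ to cancel the leading coefficients, and invoke compatibility of the admissible order with two-sided multiplication (via Lemma~\ref{lem:normalform}(i)) to see that every surviving term is strictly smaller than $\Tip(x)$. The only stylistic quibble is that you tie the last step to the length-left-lexicographic order specifically, whereas the statement only assumes an admissible order; the multiplicativity you use is part of admissibility in general.
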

For a proof of (i) see \cite[Corollary 2.2]{Green2}.  The statement
in (ii) is a consequence of the definition of a Gr\"obner basis, and
the statement in (iii) is a consequence of (ii) and the divsion
algorithm in \cite[Division algorithm 2.3.2]{Green2}. 

From a generating set $\mathcal{F}$ of an ideal in $kQ$ using the
Buchberger-algorithm a Gr\"obner basis $\mathcal{G}$ for $I$ can be
constructed (see \cite[2.4.1]{Green2}).  Using this algorithm it is
easy to see the following result, which we need in Section
\ref{sec:cleft-findim}.  Recall that an element $u$ in $kQ$ is called
\textbf{uniform} if $u = euf$ for some trivial paths $e$ and $f$ in
$kQ$.
\begin{lem}\label{lem:groebneravoidingarrow}
  Let $\mathcal{F}$ be a generating set of uniform elements of an
  ideal $I$ in $kQ$.  If an arrow $a$ does not occur in any path of
  any element of $\mathcal{F}$, then there is a Gr\"obner basis
  $\mathcal{G}$ of $I$ such that $a$ does not occur in any path of any
  element in $\mathcal{G}$.
\end{lem}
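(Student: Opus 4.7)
The plan is to track an invariant through the Buchberger-type algorithm referenced in \cite[2.4.1]{Green2}. Starting from the generating set $\mathcal{F}$, that algorithm builds a Gr\"obner basis for $I$ by iterating two operations: (i) forming overlap relations from pairs of current basis elements, and (ii) reducing the resulting elements modulo the current basis via the procedure in Lemma~\ref{lem:existencegroebner}(iii). I would prove that both operations preserve the property ``every element is uniform and no path appearing in it contains the arrow~$a$''. Since $\mathcal{F}$ satisfies this property by hypothesis, the algorithm's output $\mathcal{G}$ will then be a Gr\"obner basis of $I$ none of whose elements involves~$a$.

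For operation (i), I would argue as follows. An overlap of two uniform elements $g, g'$ in the current basis is determined by a factorization of paths $\Tip(g)\, q = p\,\Tip(g')$ (or one of the analogous overlap patterns), producing the new element $g\, q - c\, p\, g'$ for a suitable scalar $c\in k$. If $\Tip(g)$ and $\Tip(g')$ avoid the arrow $a$, then $p$ and $q$ are subpaths of the composite $\Tip(g)\,q = p\,\Tip(g')$, hence they too avoid~$a$; thus every path in $gq$ and in $pg'$ avoids $a$, and so does every path in their difference. A similar analysis applies to the other overlap patterns, since in each case the connecting paths are forced to be subpaths of the two tips.

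For operation (ii), suppose an element $x$ has no path involving $a$. By Lemma~\ref{lem:existencegroebner}(iii) one reduction step replaces $x$ by $x - c\, p\, g\, p'$, where $g$ is in the current basis, $c\in k$, and $\Tip(x) = p\,\Tip(g)\,p'$. Under our invariant, $\Tip(g)$ avoids $a$ and $\Tip(x)$ avoids $a$, so $p$ and $p'$ are subpaths of $\Tip(x)$ that avoid $a$; hence $pgp'$ avoids $a$, and so does $x - c\, p\, g\, p'$. Iterating this reduction, which terminates by the admissibility of the ordering, preserves the invariant.

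The main obstacle is the combinatorial bookkeeping for operation (i): non-commutative path algebras admit several types of overlaps (prefix, suffix, and fully internal), and for each type one must verify that the overlap paths $p$ and $q$ are literally subpaths of the tips $\Tip(g)$ and $\Tip(g')$ and therefore inherit the property of not containing~$a$. Once this case analysis is spelled out, the induction on the Buchberger algorithm's steps is routine, and combining with Lemma~\ref{lem:existencegroebner}(i) yields the desired Gr\"obner basis $\mathcal{G}$.
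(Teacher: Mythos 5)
Your proof fills in exactly what the paper leaves implicit: the paper merely cites the Buchberger-type completion algorithm from \cite[2.4.1]{Green2} and declares the lemma ``easy to see,'' and your invariant-tracking argument is the intended justification. One small wording slip in the overlap case: $p$ and $q$ are not merely ``subpaths of the composite'' (which could a priori contain $a$ before you know otherwise); rather, in a genuine overlap $\Tip(g)=su$, $\Tip(g')=us'$ one has $p=s$ a prefix of $\Tip(g)$ and $q=s'$ a suffix of $\Tip(g')$, and in a containment $\Tip(g)=s\,\Tip(g')\,s'$ the paths $s,s'$ are subpaths of $\Tip(g)$ --- which is what forces them to avoid $a$; you do in fact state this correctly at the end of that paragraph.
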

The second result we need in Section \ref{sec:cleft-findim} is an easy
consequence of the above.  
\begin{lem}\label{lem:nontipsarrowremoved}
Let $\Lambda = kQ/I$ for an admissible ideal $I$, and let $a$ be an
arrow in $Q$ not occurring in any path of any element of a minimal set
of generators for $I$.  Denote by $Q^*$ the quiver $Q$ with the arrow
$a$ removed, and let $I^* = kQ^*\cap I$.  Then
\[\Nontip(I^*) = \Nontip(I) \cap kQ^*.\]
\end{lem}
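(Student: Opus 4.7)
My plan is to reduce the identity $\Nontip(I^*) = \Nontip(I)\cap kQ^*$ to the tip-level identity $\Tip(I^*) = \Tip(I)\cap \mathcal{B}^*$, where $\mathcal{B}^*$ denotes the set of paths in $Q^*$. Indeed, by definition $\Nontip(I) = \mathcal{B}\setminus \Tip(I)$ and $\Nontip(I^*) = \mathcal{B}^*\setminus \Tip(I^*)$, and interpreting the intersection with $kQ^*$ as intersection with $\mathcal{B}^*$ (since $\Nontip$-sets are sets of basis paths), we have $\Nontip(I)\cap kQ^* = \mathcal{B}^*\setminus \Tip(I)$. So the claim is equivalent to $\Tip(I^*) = \Tip(I)\cap \mathcal{B}^*$.

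The key tool is Lemma \ref{lem:groebneravoidingarrow}: since $a$ does not occur in any element of some minimal generating set of $I$, there exists a Gr\"obner basis $\mathcal{G}$ of $I$ (with respect to the length-left-lexicographic order $\succ$) consisting entirely of elements of $kQ^*$. I will show that $\mathcal{G}$ is also a Gr\"obner basis of $I^*$ with respect to the induced ordering on $\mathcal{B}^*$. First, I claim $\mathcal{G}$ generates $I^*$ as an ideal in $kQ^*$: given any $x\in I^* \subseteq I$, run the division algorithm (Lemma \ref{lem:existencegroebner}(iii)) to reduce $x$ to $0$ by successive replacements $x \mapsto x - cpgp'$ with $\Tip(x) = p\Tip(g)p'$, $g\in\mathcal{G}$. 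At every step, $x$ lies in $kQ^*$, so $\Tip(x)$ avoids the arrow $a$; since $\Tip(g)$ also avoids $a$, the factorization $\Tip(x) = p\Tip(g)p'$ forces both $p$ and $p'$ to be paths in $Q^*$, and thus $pgp'\in kQ^*$. Hence the entire reduction stays inside $kQ^*$, exhibiting $x$ as a $kQ^*$-linear combination of the $g\in\mathcal{G}$, so $\mathcal{G}\subseteq I^*$ generates $I^*$ in $kQ^*$.

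The same argument shows that $\mathcal{G}$ is a Gr\"obner basis for $I^*$: for any non-zero $x\in I^*$, the tip $\Tip(x)$ admits a factorization $p\Tip(g)p'$ with $g\in\mathcal{G}$ and $p,p'\in\mathcal{B}^*$, so $\Tip(I^*)\subseteq \langle \Tip(\mathcal{G})\rangle_{kQ^*}\subseteq \Tip(I^*)$. Now I can finish: the inclusion $\Tip(I^*)\subseteq \Tip(I)\cap\mathcal{B}^*$ is immediate from $I^*\subseteq I$ and the fact that tips of elements of $I^*$ are paths in $Q^*$. For the reverse inclusion, let $p\in\Tip(I)\cap\mathcal{B}^*$; by the Gr\"obner property of $\mathcal{G}$ in $kQ$, $p$ is divisible by $\Tip(g)$ for some $g\in\mathcal{G}$, say $p = p_1\Tip(g)p_2$. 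As before, $p$ and $\Tip(g)$ avoid the arrow $a$, so $p_1,p_2\in\mathcal{B}^*$; then $p_1 g p_2\in I^*$ has tip $p$, so $p\in\Tip(I^*)$.

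The only delicate point — and the one I would treat with care — is the argument that the division stays within $kQ^*$: this uses the length-left-lexicographic admissibility of $\succ$ together with the uniformity of paths, i.e.\ that a factorization $p = p_1 q p_2$ of paths in $Q$ forces $p_1, p_2$ to be paths in $Q^*$ whenever $p$ and $q$ both avoid $a$. Everything else is bookkeeping around Lemmas \ref{lem:normalform}, \ref{lem:existencegroebner}, and \ref{lem:groebneravoidingarrow}.
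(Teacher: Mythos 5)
Your proof is correct and follows essentially the same route as the paper: both use Lemma~\ref{lem:groebneravoidingarrow} to obtain a Gr\"obner basis $\mathcal{G}$ of $I$ avoiding $a$, then observe that a factorization $\Tip(x) = p\Tip(g)p'$ with $\Tip(x)$ and $\Tip(g)$ both $a$-free forces $p,p'\in kQ^*$, so $pgp'\in I^*$, while the reverse containment is immediate from $I^*\subseteq I$. The only difference is bookkeeping: the paper argues directly at the level of $\Nontip$ and merely asserts that $\mathcal{G}$ is also a Gr\"obner basis for $I^*$, whereas you pass through the equivalent identity $\Tip(I^*)=\Tip(I)\cap\mathcal{B}^*$ and spell out that assertion (and a generation claim that isn't actually needed for the lemma) via the division algorithm.
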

\begin{proof}
  Let $x$ be in $\Nontip(I^*)$ and assume that $x$ is in $\Tip(I)$.
  Then $x = p\Tip(g)p'$ for some paths $p$ and $p'$ and $g$ in
  $\mathcal{G}$. Since $x$ is in $kQ^*$, the paths $p$ and $p'$ are
  also in $kQ^*$.  By Lemma \ref{lem:groebneravoidingarrow} the
  Gr\"obner basis $\mathcal{G}$ for $I$ is also a Gr\"obner basis for
  $I^*$.  We infer that $x$ is a tip of an element in $I^*$, which is
  a contradiction, and $x$ is in $\Nontip(I)\cap kQ^*$.

  Conversely, assume that $x$ is in $\Nontip(I)\cap kQ^*$. Clearly $x$
  is the set $\mathcal{B}^*$ of all paths in $Q^*$ and not in
  $\Tip(I)$.  Since $I^*\subseteq I$, the inclusion
  $\Tip(I^*) \subseteq \Tip(I)$ holds.  If $x$ is in $\Tip(I^*)$, then
  $x$ is in $\Tip(I)$. This is a contradication, so that $x$ is in
  $\Nontip(I^*)$. 
\end{proof}


\begin{thebibliography}{99}
\bibitem{AuslanderBuchsbaum}
\textsc{M.~Auslander, D.~Buchbaum}, {\em Homological dimension in regular local rings}, Trans. Amer. Math. Soc. 85 (1957) 390--405.

\bibitem{AuslanderReiten}
\textsc{M.~Auslander, I.~Reiten}, {\em On a generalized version of the Nakayama conjecture}, Proc.\ Amer.\ Math.\ Soc.\ 52 (1975), 69--74.


\bibitem{Bel:Cleft} 
\textsc{A.\ Beligiannis}, {\em Cleft extensions of abelian categories and applications to ring theory}, Comm.\ Algebra 28 (2000), no.\ 10, 4503--4546.

\bibitem{Bel:Relativecleft} 
\textsc{A.\ Beligiannis}, {\em On the relative homology of cleft
  extensions of rings and abelian categories}, J.\ Pure Appl.\ Algebra
150 (2000), no.\ 3, 237--299. 

\bibitem{DK}
\textsc{L.~Diracca, S.~Koenig}, {\em Cohomological reduction by split pairs}, J.\ Pure Appl.\ Algebra 212 (2008), no.\ 3, 471--485. 


\bibitem{FosGriRei} 
\textsc{R.~Fossum, P.~Griffith, I.~Reiten}, \emph{Trivial
  Extensions of Abelian Categories with Applications to Ring Theory},
Lecture Notes in Mathematics, vol. {\bf 456}, Springer, Berlin,
(1975). 


\bibitem{Ful-Sao:FinDimConjArtRings} \textsc{K.\ Fuller}, \textsc{M.\
    Saorin}, \emph{On the finitistic dimension conjecture for
    Artinian rings}, Manuscripta Math.\ 74 (1992), no.\ 2, 117--132.

\bibitem{Green2}
\textsc{E.\ L.\ Green}, \emph{Noncommutative Gr\"obner bases, and
  projective resolutions}. Computational methods for representations
of groups and algebras (Essen, 1997), 29--60, Progr.\ Math., 173,
Birkh\"auser, Basel, 1999.

\bibitem{Green} 
\textsc{E.\ L.\ Green}, \emph{Multiplicative bases, Gr\"obner bases,
    and right Gr\"obner bases}, Symbolic computation in algebra,
  analysis, and geometry (Berkeley, CA, 1998).  J.\ Symbolic Comput.\
  29 (2000), no.\ 4--5, 601--623.


\bibitem{GreenMarcos} \textsc{E.\ L.\ Green, E.\ N.\ Marcos},
  \emph{Convex subquivers and the finitistic dimension},
  arXiv:1703.04562. 

\bibitem{Happel:Gorenstein}
\textsc{D.~Happel}, {\em On Gorenstein algebras}, in Representation Theory of Finite Groups and Finite-Dimensional Algrebras, Birkh\"auser Verlag (Basel 1991), 389--404.


\bibitem{Happel} \textsc{D.\ Happel}, \emph{Reduction techniques for homological conjectures},
Tsukuba J.\ Math.\ 17 (1993), no.\ 1, 115--130.


\bibitem{Igu-Sma-Tod:FinProjConFin} \textsc{I. Igusa, S. O. Smal\o{},
    G. Todorov}, {\em Finite projectivity and
    contravariant finiteness},  Proc.\ Amer.\ Math.\ Soc.\ 109 (1990),
  no.\ 4, 937--941.

\bibitem{Koenig} \textsc{S.\ K\"{o}nig}, \emph{Tilting
    complexes,perpendicular categories and recollement of derived module
categories of rings}, J.\ Pure and Applied Algebra, 1991, vol.\ 73, 211--232.

\bibitem{Krause:projectivecovers}
\textsc{H.\ Krause}, {\em Krull-Schmidt categories and projective
  covers}, Expo. Math. \ 33 (2015), 535--549. 

\bibitem{Psaroud} \textsc{C. Psaroudakis}, {\em Homological Theory of
    Recollements of Abelian Categories}, J. Algebra 398 (2014),
  63--110.
  
 \bibitem{Psaroud:survey}
\textsc{C.~Psaroudakis}, {\em A representation-theoretic approach to recollements of abelian categories}, to appear in Contemp.\ Math.\ of Amer.\ Math.\ Soc.\ 

\bibitem{PsaroudVitoria}
\textsc{C.~Psaroudakis, J.~Vit\'oria}, {\em Recollements of Module
  Categories}, Appl. Categ. Structures 22 (2014), no. 4, 579--593. 

\bibitem{PSS}
\textsc{C. Psaroudakis, \O{}. Skarts\ae{}terhagen},
\textsc{\O{}. Solberg}, {\em Gorenstein Categories, Singular
  Equivalences and Finite Generation of Cohomology Rings in
  Recollements}, Trans. Amer. Math. Soc. Ser. B 1 (2014), 45--95. 

\bibitem{QPA} \textsc{The QPA-team}, QPA - Quivers, path algebras and
  representations - a GAP package, Version 1.27; 2017
  (https://folk.ntnu.no/oyvinso/QPA/)

\bibitem{Rickard}
\textsc{J.~Rickard}, {\em Unbounded derived categories and the finitistic dimension conjecture}, arXiv:1804.09801.


\bibitem{Solberg}
\textsc{\O{}. Solberg}, {\em Reduction for finistic dimension}, Notes.

\bibitem{Wang-Xi} \textsc{C.\ Wang, C.\ Xi}, \emph{Finitistic
    dimension conjecture and radical-power extensions},  J.\ Pure
  Appl.\ Algebra 221 (2017), no.\ 4, 832--846. 

\bibitem{Xi:FinDimConjI} \textsc{C.\ Xi}, \emph{On the finitistic
    dimension conjecture \textup{I}, Related to representation-finite
      algebras}, J.\ Pure Appl.\ Algebra 193 (2004), no.\ 1--3,
    287--305.

\bibitem{Xi:FinDimConjII} \textsc{C.\ Xi}, \emph{On the finitistic dimension
  conjecture \textup{II}, Related to finite global dimension}, 
Adv.\ Math.\ 201 (2006), no.\ 1, 116--142. 

\bibitem{Xi:FinDimConjIII} \textsc{C.\ Xi}, \emph{On the finitistic dimension
    conjecture \textup{III}, Related to the pair $eAe\subseteq A$},
  J.\ Algebra 319 (2008), no.\ 9, 3666--3688.

\bibitem{Xu}  \textsc{D.\ Xu}, \emph{Idealized extensions of Artin
    algebras and finitistic dimensions}, Comm.\ Algebra 44 (2016),
  no.\ 3, 965--976. 

\bibitem{ZimmermannHuisgen}
\textsc{B.~Zimmermann-Huisgen}, {\em The finitistic dimension conjectures?a tale of 3.5 decades}, Abelian groups and modules (Padova, 1994), 501--517, Math. Appl., 343, Kluwer Acad. Publ., Dordrecht, 1995.  
  
\end{thebibliography}
\end{document}